\title{Families of Bianchi modular symbols: critical base-change $p$-adic $L$-functions and $p$-adic Artin formalism}
\author{Daniel Barrera Salazar and Chris Williams\\ \normalsize{ \emph{with an appendix by Carl Wang-Erickson}}}
\date{\vspace{-10pt}}
\newcommand{\Addresses}{{
  \bigskip
  \footnotesize
Daniel Barrera Salazar; Universidad de Santiago de Chile, Dpto. de Matem\'{a}tica y CC., Alameda 3363, Estaci\'{o}n Central, Santiago, Chile $\cdot $  \texttt{daniel.barrera.s@usach.cl}\\

Carl Wang-Erickson; University of Pittsburgh, Thackeray Hall, 166 Thackeray Ave, Pittsburgh, PA 15213, United States $\cdot $ 
 \texttt{carl.wang-erickson@pitt.edu }\\

Chris Williams; University of Warwick, Zeeman Building, Coventry CV4 7AL, United Kingdom\\ $ \cdot $ 
\texttt{christopher.d.williams@warwick.ac.uk}\\

}}
\let\OLDthebibliography\thebibliography
\renewcommand\thebibliography[1]{
\OLDthebibliography{#1}
\setlength{\parskip}{0pt}
\setlength{\itemsep}{0pt plus 0.3ex}
}
\newcommand{\GL}{\mathrm{GL}}
\newcommand{\cA}{\mathcal{A}}
\DeclareMathOperator{\ad}{ad}
\newcommand{\unitsize}{\#\roi_K^\times}
\newcommand{\FQ}{_{K/\Q}}
\newcommand{\W}{\mathcal{W}}
\newcommand{\E}{\mathcal{E}}
\newcommand{\cE}{\mathcal{E}}
\newcommand{\CC}{\mathcal{C}}
\newcommand{\cyc}{{\mathrm{cyc}}}
\newcommand{\Dz}{\mathcal{D}^0}
\newcommand{\Dl}{\mathscr{D}^0}
\newcommand{\Dla}{\mathcal{D}}
\newcommand{\DD}{\mathscr{D}}
\newcommand{\VV}{\mathscr{V}}
\newcommand{\rigidA}{\mathcal{A}^0}
\newcommand{\laA}{\mathcal{A}}
\newcommand{\hc}{\h^1_{\mathrm{c}}}
\newcommand{\ssh}{^{\leq h}}
\newcommand{\Y}{Y_1(\n)}
\newcommand{\locx}{_{\m_x}}
\newcommand{\locl}{_{\m_\lambda}}
\DeclareMathOperator{\Tor}{Tor}
\DeclareMathOperator{\Sp}{Sp}
\newcommand{\mel}{\mathrm{Mel}}
\newcommand{\Epar}{\E_{\mathrm{par}}}
\newcommand{\Refrm}{\mathrm{ref}}
\newcommand{\bc}{\mathrm{bc}}
\numberwithin{equation}{section}
\begin{document}
\setlength{\abovedisplayskip}{7pt}
	\setlength{\belowdisplayskip}{7pt}
	\setlength{\abovedisplayshortskip}{7pt}
	\setlength{\belowdisplayshortskip}{7pt}

%
%

\maketitle

	\renewcommand{\thefootnote}{\fnsymbol{footnote}} 
	\footnotetext{\emph{2010 MSC:} Primary 11F33, 11F41, 11F67, 11F85, 11S40; Secondary 11M41
	}

\begin{abstract}
Let $K$ be an imaginary quadratic field. In this article, we study the eigenvariety for $\GLt/K$, proving an \'etaleness result for the weight map at non-critical classical points and a smoothness result at base-change classical points. We give three main applications of this; let $f$ be a $p$-stabilised newform of weight $k \geq 2$ without CM by $K$.  Suppose $f$ has finite slope at $p$ and its base-change $f_{/K}$ to $K$ is $p$-regular. Then: (1)  We construct a two-variable $p$-adic $L$-function attached to $f_{/K}$ under assumptions on $f$ that conjecturally always hold, in particular with no non-critical assumption on $f/K$. (2) We construct three-variable $p$-adic $L$-functions over the eigenvariety interpolating the $p$-adic $L$-functions of classical base-change Bianchi cusp forms. (3) We prove that these base-change $p$-adic $L$-functions satisfy a $p$-adic Artin formalism result, that is, they factorise in the same way as the classical $L$-function under Artin formalism.

\end{abstract}

\setcounter{tocdepth}{2}
\footnotesize
\tableofcontents
\normalsize

\section{Introduction}

Let $f \in S_{k+2}(\Gamma_1(N))$ be a classical eigenform of weight $k+2\geq 2$ and level $N$ divisible by $p$ (which we may always assume after possibly $p$-stabilising from prime-to-$p$ level). Suppose $U_p f \neq 0$, i.e.\ $f$ has finite slope. When considering questions about the Iwasawa theory of $f$, there are two notions that occur repeatedly; firstly, base-changing $f$ to an imaginary quadratic field $K$, studying $p$-adic $L$-functions attached to the resulting Bianchi modular form (e.g. \cite{BD07}, \cite{SU14}),  and secondly, allowing $f$ (and the associated $p$-adic $L$-functions) to vary in a $p$-adic family. 

Previous constructions of $p$-adic $L$-functions attached to Bianchi modular forms have focused exclusively on the case of `non-critical slope', namely, under the hypothesis that the slope -- the $p$-adic valuation of the $U_p$-eigenvalue $\alpha_p(f)$ -- is `sufficiently small'. This misses many interesting cases (see Rem.\ \ref{rem:missing cases}). For example, if $p$ is split in $K$ and $E/\Q$ is an elliptic curve with good ordinary reduction at $p$, then its corresponding modular form $f_E$ has level $M$ prime to $p$ and two $p$-stabilisations $f, f'$ to level $N = Mp$, one of which has critical slope base-change to $K$. If $p$ is inert or ramified, the situation is even more pronounced, as fully half of the possible range of slopes of base-change forms is critical. This gives a `missing' base-change $p$-adic $L$-function for \emph{every} modular form of prime-to-$p$ level, and some examples with \emph{no} existing base-change $p$-adic $L$-functions. Further, variation of these $p$-adic $L$-functions in families has previously been proved only for $p$ split, and little is known beyond the case of Hida families, i.e.\ for slope exactly 0.

\subsection{Our results} In this paper, we aim towards a more complete theory of $p$-adic $L$-functions attached to base-change Bianchi modular forms.   Suppose:
	\begin{itemize}\setlength{\itemsep}{0pt}
		\item[(a)] $f$ does not have CM by $K$, 
		\item[(b)] $f$ is either new or a $p$-stabilisation of a newform of level prime to $p$, and 
		\item[(c)] its base-change $f_{/K}$ to $K$ is $p$-regular (in the sense of (C3) in Def.\ \ref{def:pstab newform}).
	\end{itemize}
	
Our main result, Thm.\ \ref{intro:interpolation}, is the variation of $p$-adic $L$-functions in a $p$-adic family through $f_{/K}$, via the construction of a three-variable $p$-adic $L$-function. Our construction is unconditional when $f_{/K}$ is non-critical, and more generally is valid under hypotheses on $f_{/K}$ that conjecturally always hold. This generalises previous constructions for Hida families when $p$ is split in $K$, and when $p$ is inert or ramified, we believe this construction to be entirely new, even for Hida families.
	
We apply this in Thm.\ \ref{intro:thm B} to construct the `missing' $p$-adic $L$-functions attached to $p$-regular base-change forms. Suppose $f$ as above has critical slope. Under the same (conjecturally automatic) hypotheses, we construct a $p$-adic $L$-function attached to $f_{/K}$. We make no non-criticality assumptions and allow arbitrary $p$. This $p$-adic $L$-function naturally has two (cyclotomic and anticyclotomic) variables; note that it is difficult to construct this directly from $p$-adic $L$-functions attached to $f$, as we would not see any anticyclotomic variation.

After restricting to the cyclotomic line, in Thm.\ \ref{intro:artin} we relate our constructions to the $p$-adic $L$-functions attached to the original form $f$ via a $p$-adic analogue of the factorisation given by classical Artin formalism. This generalises a result that is important in the Iwasawa theory of elliptic curves, and that was previously known only under an even more restrictive slope condition that excludes cases of important arithmetic interest.

To prove these results, we build on work of the second author, who gave constructions of $p$-adic $L$-functions for (non-critical slope) Bianchi modular forms, under no base-change assumption, in \cite{Wil17}. The $p$-adic $L$-function was shown to be the Mellin transform of a class in overconvergent cohomology/modular symbols, as introduced by Stevens in the classical setting \cite{Ste94}. This class, and hence the $p$-adic $L$-function, is canonical up to $p$-adic scalar. The main input of the current paper is a pairing of this construction with a systematic study of the eigenvariety parametrising Bianchi modular forms. The use of overconvergent cohomology in constructing eigenvarieties -- generalising the pioneering work of Hida in the ordinary setting -- was known to Stevens, later explored by Ash--Stevens \cite{AS08}, Urban \cite{Urb11} and more recently by Hansen \cite{Han17} and the authors \cite{BW20}. 

\subsection{The Bianchi eigenvariety}  The Bianchi situation is very different from the classical and Hilbert settings, and  requires new ideas. Most strikingly, cuspidal Bianchi modular forms contribute to the cohomology of the associated locally symmetric space in more than one degree (in degrees 1 and 2). The $p$-adic $L$-functions of \cite{Wil17} are constructed using classes in $\hc$; but Bianchi eigenvarieties are naturally constructed using classes that appear \emph{only} in $\h^2_{\mathrm{c}}$ (Lem.\ \ref{lem:min degree}). In particular, the use of existing techniques to vary classes in $\hc$ in families is obstructed by the existence of classical classes in $\h_{\mathrm{c}}^2$. This is related to the fact that $\GL_2(\C)$ does not admit discrete series, and the classical points in the Bianchi eigenvariety -- the points corresponding to classical Bianchi modular forms -- are not Zariski-dense. This is captured in the phenomenon that the cuspidal part of the Bianchi eigenvariety is one-dimensional lying over a two-dimensional weight space.

Overcoming this obstruction is a key step in the construction, and occupies all of \S\ref{sec:families of ms}. We isolate certain curves $\Sigma$ in weight space that allow us to pass from families in $\h^2_{\mathrm{c}}$ to families in $\hc$ (Prop.\ \ref{prop:appear in H1}), as needed to relate to $p$-adic $L$-functions. Under a smoothness hypothesis on $\Sigma$, that is satisfied for all base-change families, we prove an \'etaleness result (Thm.\ \ref{thm:free rank 1}) for the eigenvariety at non-critical classical points. These results require no base-change condition. Again, the situation is made difficult by existence of classes in $\h^2_{\mathrm{c}}$; we hope the techniques we use will apply in more general `badly behaved' settings, e.g.\ $\GL_n/\Q$ for $n\geq 3$.

In \S\ref{sec:parallel weight ev}  we construct a `parallel weight' eigenvariety $\Epar$ using degree 1 overconvergent cohomology over the parallel weight line in the Bianchi weight space. In $\Epar$, we recover the desirable property that the classical points are Zariski-dense, and by $p$-adic Langlands functoriality it contains all classical points corresponding to base-change forms. In Props.\ \ref{prop:smooth nc} and \ref{prop:smooth} we prove that the base-change eigenvariety $\cE_{\mathrm{bc}} \subset \Epar$ is smooth and reduced at decent (see Def.\ \ref{def:decent}) classical base-change points $f_{/K}$ (under no non-criticality assumption). The result uses a base-change deformation functor described in an appendix by Carl Wang-Erickson that allows us to reduce to a case treated by Bella\"{i}che \cite{Bel12}.

We say $f_{/K}$ is \emph{$\Sigma$-smooth} if the inclusion $\E_{\mathrm{bc}} \subset \Epar$ is locally an isomorphism at $f_{/K}$. For decent $f_{/K}$, this is equivalent to $\Epar$ being smooth at $f_{/K}$. By the \'etaleness result of Thm.\ 4.5, any non-critical $f_{/K}$ is $\Sigma$-smooth. More generally, a conjecture of Calegari and Mazur \cite{CM09} -- which predicts that the only classical families of Bianchi modular forms are CM or come from base-change -- would imply that every $f_{/K}$ is $\Sigma$-smooth. We discuss this in detail in \S\ref{sec:sigma smooth}.

\subsection{Applications to $p$-adic $L$-functions} We summarise our main applications.  Let $f \in S_{k+2}(\Gamma_1(N))$ satisfy (a), (b) and (c) above, and suppose $f$ is decent and $f_{/K}$ is $\Sigma$-smooth. Let $\mathscr{X}(\cl_K(p^\infty))$ be the two-dimensional rigid space of $p$-adic characters on $\cl_K(p^\infty)$, the ray class group of $K$ of conductor $p^\infty$. Let $\phi$ be a finite order Hecke character of $K$ of conductor prime to $p\roi_K$. Let $x_f$ be the point in the Coleman--Mazur eigencurve $\CC$ corresponding to $f$, and let $V_{\Q}$ be a neighbourhood of $x_f$ in $\CC$.  If $y \in V_{\Q}$ is a classical point, write $f_y$ for the corresponding classical eigenform. For a Zariski-dense set of classical $y \in V_{\Q}$, the base-change $f_{y/K}$ is non-critical (Def.\ \ref{def:non-critical}). In \S\ref{sec:subsec three variable}, in the language of distributions, we prove:

\begin{theorem-intro}\label{intro:interpolation} 
	Up to shrinking $V_{\Q}$, and for sufficiently large $L \subset \overline{\Q}_p$, there exists a unique rigid-analytic function 
	\[
		\mathcal{L}_p^\phi : V_{\Q} \times \mathscr{X}(\cl_K(p^\infty)) \longrightarrow L
	\]
	such that at any classical point $y \in V_{\Q}(L)$ of weight $k_y + 2$ with non-critical base-change, and any Hecke character $\varphi$ of $K$ with conductor $\ff|p^\infty$ and infinity type $(0,0) \leq (q,r) \leq (k_y,k_y)$, we have
	\begin{equation}\label{eqn:interpolation intro}
		\mathcal{L}_p^\phi(y,\varphi_{p-\mathrm{fin}}) = c_y\left(\textstyle\prod_{\pri|p}Z_{\pri}(\varphi)\right)A(f_{y/K},\varphi)\cdot\Lambda(f_{y/K},\varphi\phi)
	\end{equation}
	where $c_y \in L^\times$ is a $p$-adic period at $y$, $\varphi_{p-\mathrm{fin}} \in \mathscr{X}(\cl_K(p^\infty))$ is the $p$-adic avatar of $\varphi$, $Z_{\pri}(\varphi)$ is an Euler-type factor, $A(f_{y/K},\varphi)$ is an explicit non-zero scalar, and $\Lambda(f_{y/K},-)$ is the $L$-function of $f_{y/K}$, all of which are defined in \S\ref{sec:bms}. For a fixed set $\{c_y\}$ of $p$-adic periods, this $\mathcal{L}_p^\phi$ is unique.
\end{theorem-intro}

In particular, for $y \in V_{\Q}$ as in the theorem, the specialisation of $\mathcal{L}_p^\phi$ at $y$ is precisely the two-variable $p$-adic $L$-function (twisted by $\phi)$ attached to $f_{y/K}$. Thus $\mathcal{L}_p^\phi$ is the three-variable $p$-adic $L$-function attached to $V_{\Q}$ and $\phi$. The meat of the proof is in producing a canonical class (up to scaling) in the overconvergent cohomology over the eigenvariety, interpolating the overconvergent classes of \cite{Wil17} at classical points. We do this in \S\ref{sec:families of ms}-\ref{sec:three variable} using the local geometry of the parallel weight eigenvariety. Given this class, $\mathcal{L}_p^\phi$ is defined as its (twisted) Mellin transform.

We comment briefly on the `choice' of periods $c_y$. We do have some control over them; under a non-vanishing hypothesis, which is satisfied for $f_{/K}$ non-critical and conjecturally for all $f_{/K}$, any two systems of periods $\{c_y\}, \{c_y'\}$ for which there exists a three-variable $p$-adic $L$-function are of the form $c_y = \alpha(y)c_y'$, where $\alpha \in \roi(V_{\Q})^\times$. We show this in Prop.\ \ref{prop:well-defined}.

When $p$ splits in $K$, using the strategy and results developed in the present paper, in \cite{BetWil20} Thm.\ \ref{intro:interpolation} is proved in the $p$-irregular case (i.e. when (c) fails).

	\begin{remark*}
It is natural to ask if there are analogues of Thm.\ \ref{intro:interpolation} for Bianchi modular forms that are \emph{not} base-change. A general Bianchi modular form $\f$ still varies in a 1-dimensional $p$-adic family $V$ of overconvergent Bianchi modular symbols (Thm.\ \ref{thm:dimension 1}) over a curve $\Sigma$ in the (2-dimensional) Bianchi weight space. When $\f$ is non-critical and $\Sigma$ is smooth at the weight of $\f$, then we prove $V$ is \'etale over $\Sigma$ (Thm.\ \ref{thm:free rank 1}, Cor.\ \ref{prop:free neighbourhood}). In this case we prove existence of a rigid function $\mathcal{L}_p^\phi : V \times \mathscr{X}(\cl_K(p^\infty)) \to L$ satisfying the interpolation \eqref{eqn:interpolation intro} of Thm.\ \ref{intro:interpolation}. 

If $V$ is a classical family (i.e.\ the classical points in $V$ are Zariski-dense), then $\Sigma$ is parallel, hence smooth. We thus unconditionally construct three-variable $p$-adic $L$-functions around non-critical points in \emph{every} classical family.

In general the nature of $V$ is mysterious; it might contain only finitely many classical points, so \eqref{eqn:interpolation intro} could be empty outside $\f$! We give a possible arithmetic interpretation in this case, and the construction of $\mathcal{L}_p^\phi$ in general, in Rem.\ \ref{rem:non-base-change}.
\end{remark*}

Now suppose $f_{/K}$ is critical and $\Sigma$-smooth, so we cannot use \cite{Wil17}. Using Thm.\ \ref{intro:interpolation}, we define the `missing' $p$-adic $L$-function for $f_{/K}$ to be the specialisation $L_p^\phi(f_{/K},-) \defeq \mathcal{L}_p^\phi(x_f,-)$. This is a two-variable $p$-adic $L$-function attached to $f_{/K}$ satisfying the expected growth property and which we prove is canonical up to a $p$-adic scalar (corresponding to the $p$-adic period of $f$). In Thm.\ \ref{prop:critical}, we prove:

\begin{theorem-intro}\label{intro:thm B}
	Suppose $f_{/K}$ is critical and $\Sigma$-smooth. Then we have 
			\[
				L_p^\phi(f_{/K},\varphi_{p-\mathrm{fin}}) = 0
			\]
			for all $\varphi$ of conductor $\ff|p^\infty$ and infinity type $(0,0)\leq(q,r)\leq (k,k)$.
\end{theorem-intro}

	 Thm.\ \ref{intro:thm B} gives at least a conjectural construction for every $p$-regular finite slope $f_{/K}$, so it goes much further than \cite{Wil17} (see Rem.\ \ref{rem:missing cases}).

The $L$-functions of $f$ and $f_{/K}$ are related by Artin formalism $L(f_{/K},s) = L(f,s) \cdot L(f,\chi_{K/\Q},s)$, where $\chi_{K/\Q}$ is the quadratic character associated to $K/\Q$. For the $p$-adic $L$-functions, such a factorisation does not make sense on the nose, since $L_p(f_{/K})$ is two-variabled whilst $L_p(f)$ and $L_p^{\chi_{K/\Q}}(f)$ are both one-variabled (valued on $\mathscr{X}(\cl_{\Q}^+(p^\infty)) \cong \mathscr{X}(\Zp^\times)$). We fix this by letting $L_p^{\cyc}(f_{/K})$ denote the restriction of $L_p(f_{/K})$ to the cyclotomic line. In Thm.\ \ref{thm:factorisation}, we then prove:

\begin{theorem-intro}\label{intro:artin}
	Suppose $f_{/K}$ is $\Sigma$-smooth and that $L_p^\cyc(f_{/K})$ and $L_p(f)L_p^{\chi_{K/\Q}}(f)$ are both non-zero. Then $L_p^{\mathrm{cyc}}(f_{/K}) = L_p(f)L_p^{\chi_{K/\Q}}(f)$ as distributions on $\Zp^\times$.
\end{theorem-intro}

This is a $\GL_2$-analogue of Gross's ($\GL_1$) $p$-adic Artin formalism relating Katz and Kubota--Leopoldt $p$-adic $L$-functions \cite{Gro80}. A factorisation relating Rankin and symmetric square $p$-adic $L$-functions, again mimicking classical Artin formalism, has been obtained by Dasgupta \cite{Das16}.

We remark that since our periods are only defined up to an algebraic scalar, this is really an equality of one-dimensional lines in the (infinite-dimensional) space $\roi(\mathscr{X}(\cl_{\Q}^+(p^\infty)))$. We explain this more fully in \S\ref{sec:base-change evs}. When $f$ has sufficiently small slope -- namely, slope $h < (k+1)/2$ -- this theorem is automatic from \emph{classical} Artin formalism, since both sides satisfy a growth property that renders this line unique with respect to their interpolation properties. For more general slopes, this result is far from obvious, as the growth and interpolation properties are satisfied by an infinite number of distinct lines in $\roi(\mathscr{X}(\cl_{\Q}^+(p^\infty)))$, and we really require the additional input of our three-variable $p$-adic $L$-function to see the equality. 

Ideally, we would also be able to control the ($p$-adic and archimedean) periods integrally to pin down an equality of lattices within this line, but this seems an extremely subtle question; studying integral period relations in the Bianchi base-change case was already the subject of \cite{TU18}, even before studying this in the context of $p$-adic $L$-functions. We comment further in \S\ref{factorisation}.

The non-vanishing condition is automatically satisfied when $f$ and $f_{/K}$ are non-critical. Under a conjecture of Greenberg, which says that all critical elliptic modular forms are CM,  we expect that $L_p(f)$ and $L_p^{\chi_{K/\Q}}(f)$ can be related to Katz $p$-adic $L$-functions, and are always non-zero; work in this direction is explained in \cite{BelCM}. We conjecture that $L_p^\cyc(f_{/K})$ is similarly never zero.

A case of particular interest where this theorem applies is the following. Let $E/\Q$ be an elliptic curve with good supersingular reduction at odd $p$, and let $f_{\alpha}$ be a $p$-stabilisation of the corresponding weight 2 classical modular form corresponding to a root $\alpha$ of the Hecke polynomial at $p$. Since $h = v_p(\alpha) = 1/2$ and $k=0$, this is outside the range where Thm.\ \ref{intro:artin} is automatic. Suppose $p$ splits in $K$. Then the base-change $f_{\alpha/K}$ has (non-critical) slope $1/2$ at each of the primes above $p$. Since the $L$-function of $f_{\alpha/K}$ corresponds to a $p$-depleted $L$-function for $E/K$, we get a factorisation
\[
L_{p,\alpha}^{\cyc}(E/K) = L_{p,\alpha}(E/\Q)L_{p,\alpha}(E^{\chi_{K/\Q}}/\Q)
\]
of the $p$-adic $L$-function of $E/K$ in terms of the $p$-adic $L$-functions of $E$ and its quadratic twist by $\chi_{K/\Q}$. In the ordinary case this factorisation was required in Skinner and Urban's proof of the Iwasawa main conjecture (see \cite{SU14}).

Finally, we remark that modulo the existence of anticyclotomic $p$-adic $L$-functions in Coleman families, the same methods also apply to restriction to the anticyclotomic line. In this case, under the same non-vanishing hypothesis, we obtain $L_p^{\mathrm{anti}}(f_{/K}) = L_p^{\mathrm{anti}}(f)^2$ (where the anticyclotomic $p$-adic $L$-function exists). We leave the details to the interested reader. Note that anticyclotomic $p$-adic $L$-functions do not yet exist in the case where $f$ is critical. The above suggests that a good candidate for (the square of) an anticyclotomic $p$-adic $L$-function in this case is the restriction to the anticyclotomic line of the $p$-adic $L$-function attached to $f_{/K}$ in this paper. \\[5pt]
\textbf{Acknowledgements:} 
This paper owes a great debt to David Hansen, who allowed us to reproduce his unpublished results, and David Loeffler, who suggested we work on $p$-adic Artin formalism. Both also gave invaluable feedback on earlier versions of this paper. We would particularly like to thank Adel Betina, for his contribution to the proof of Lem.\ \ref{prop:gen by 1}, James Newton, who patiently answered our questions on his work, and Carl Wang-Erickson, who provided the appendix. The project also benefited from interesting and motivating discussions with Chris Birkbeck, Denis Benois, Kevin Buzzard, Mladen Dimitrov, Netan Dogra, Luis Palacios, Lennart Gehrmann, Ming-Lun Hsieh, Adrian Iovita, Fabian Januszewski and Victor Rotger.  Finally, we thank the referee for their valuable comments and corrections. C.W.\ would like to thank Victor Rotger for his generous financial support, and was also supported by EPSRC postdoctoral fellowship EP/T001615/1. D.B.\ was supported by ANID's grants 77180007 and 11201025. This project has received funding from the European Research Council (ERC) under the European Union's Horizon 2020 research and innovation programme (grant agreement No 682152).

%
%

\section{Bianchi modular forms and $p$-adic $L$-functions}
\label{sec: 2}\label{basic definitions}
In this section, we fix notation and recap the results of \cite{Wil17}, always using the conventions \emph{op.\ cit.} Fix embeddings $\overline{\Q} \hookrightarrow \C$ and $\overline{\Q} \hookrightarrow \overline{\Q}_\ell$ for each prime $\ell$. Let $K$ be an imaginary quadratic field with ring of integers $\roi_K$ and discriminant $-d$. Let $p$ be a rational prime; the choice $\overline{\Q} \hookrightarrow \overline{\Q}_p$ fixes a $p$-adic valuation $v_p$ on $\overline{\Q}$.

Denote the adele ring of $K$ by $\A_K = \C \times \A_K^f$, where $\A_K^f$ denotes the finite adeles.  For an ideal $\ff \subset \roi_K$, let $\mathrm{Cl}_K(\ff) \defeq K^\times \backslash \A_K^\times/I(\ff)\C^\times$ denote the ray class group of $K$ modulo $\ff$, where $I(\ff) \defeq \{x \in (\roi_K\otimes \widehat{\Z})^\times: x \equiv 1 \newmod{\ff}\}$. Throughout, we work at level $\n \subset \roi_K$ divisible by each prime of $K$ above $p$. We write $U_1(\n)$ for the open compact subgroup of $\GLt(\roi_K\otimes_{\Z}\widehat{\Z})$ of matrices congruent to $\smallmatrd{*}{*}{0}{1}$ modulo $\n$, and $\mathcal{K}_\infty = \mathrm{SU}_2(\C)\C^\times$, and we define the associated locally symmetric space by 
\[
	\Y \defeq \GLt(K)\backslash\GLt(\A_K)/\mathcal{K}_\infty U_1(\n).
\]
We define $\uhs \defeq \C\times \R_{>0}$; the space $\Y$ decomposes as a finite disjoint union of quotients of $\uhs$. 

Let $j \geq 0$ be an integer. For any ring $R$, let $V_{j}(R)$ denote the space of polynomials over $R$ of degree at most $j$. Throughout, we will denote modules of locally analytic distributions by $\Dla(*)$. These carry an action of $U_1(\n)$, and the corresponding local systems on $\Y$ will be denoted by $\DD(*)$.

We will use $f$ for a classical modular form and $\f$ a Bianchi modular form. For $X$ an affinoid in a rigid space, $\roi(X)$ will denote the ring of rigid functions on $V$. We will write $V$ (resp.\ $V_{\Q}$) for affinoids in the Bianchi (resp.\ Coleman--Mazur) eigenvariety. If $y$ is a classical point in an eigenvariety, we will write $\f_y$ or $f_y$ for the corresponding (Bianchi or classical) normalised modular form of minimal level, which will always be uniquely defined by our running assumptions.

\subsection{Bianchi modular forms, $L$-functions and cohomology} \label{BMF}\label{sect:modsymb}
Let $\lambda = (\mathbf{k},\mathbf{v})$ be a weight, where $\mathbf{k} = (k_1,k_2)$ and $\mathbf{v} = (v_1,v_2)$ are two elements of $\Z^2$. There is a finite-dimensional $\C$-vector space $S_{\lambda}(U_1(\n))$ of \emph{Bianchi cusp forms} of weight $\lambda$ and level $U_1(\n)$, which are vector-valued functions on $\GL_2(\A_K)$ satisfying suitable transformation, harmonicity and growth conditions. These objects are defined precisely in e.g.\ \cite[Def.\ 1.2]{Wil17}. If $k_1 \neq k_2$, then $S_{\lambda}(U_1(\n)) = 0$ (see \cite{Har87}), so we will restrict to parallel weight $k_1 = k_2 = k \geq 0$; and in this case, we can always twist the central character by a power of the norm to assume that $v_1 = v_2 =0$ as well. For the rest of this section, we fix $\lambda = [(k,k),(0,0)]$, and we will write this as $\lambda = (k,k)$ without further comment.

For $\mathfrak{q} \subset \roi_F$ prime, fix a uniformiser $\varpi_{\mathfrak{q}}$ of $K_{\mathfrak{q}}$. Consider $\varpi_{\mathfrak{q}} \in \A_K^{f,\times}$ trivial at every place $ \neq \mathfrak{q}$. We have Hecke operators $T_{\mathfrak{q}} \defeq [U_1(\n) \smallmatrd{1}{}{}{\varpi_{\mathfrak{q}}} U_1(\n)]$. If $\mathfrak{q}|\n$, we write $U_{\mathfrak{q}}$ instead of $T_{\mathfrak{q}}$. For each $v \in \cl_K(\n)$, let $u_v \in \A_{K}^{f,\times}$ be a representative, and define $\langle v \rangle \defeq [U_1(\n)\smallmatrd{u_v}{}{}{u_v}U_1(\n)]$. The double coset operators $T_{\mathfrak{q}}, U_{\mathfrak{q}}, \langle v\rangle$ are all independent of choices of representatives and act on $S_{\lambda}(U_1(\n))$. An \emph{eigenform} is a simultaneous eigenvector. Attached to an eigenform $\f$ is a character $\epsilon_{\f} : \cl_K(\n) \to \overline{\Q}^\times$ with $\langle v \rangle \f = \epsilon_{\f}(v)\f$ for all $v \in \cl_K(\n)$.
	
	\begin{definition}\label{def:hecke alg}
		Let $\HH_{\n,p}$ denote the $\Zp$-algebra generated by the Hecke operators $\{T_{\mathfrak{q}}: (\mathfrak{q},\n) = 1\}, \{U_{\pri}: \pri|p\}$ and $\{\langle v \rangle : v  \in \cl_K(\n)\}$.
	\end{definition}

Our theorems require $\n$ to be divisible by each prime $\pri$ above $p$. If $\pri\nmid \mathfrak{N}$ and $\f \in S_\lambda(U_1(\mathfrak{N}))$ is an eigenform, let $a_{\pri}(\f)$ denote the $T_{\pri}$ eigenvalue of $\f$, and let $\alpha_{\pri}$ and $\beta_{\pri}$ denote the roots of the Hecke polynomial $X^2 - a_{\pri}(\f)X + \epsilon_{\f}(\pri)N(\pri)^{k+1}.$ The \emph{$\pri$-stabilisations of $\f$} are 
\[
	\f_{\alpha_{\pri}}(g) \defeq \f(g) - \beta_{\pri}\f\left(\smallmatrd{\varpi_{\pri}^{-1}}{0}{0}{1}g\right), \hspace{8pt}\f_{\beta_{\pri}}(g) \defeq \f(g) - \alpha_{\pri}\f\left(\smallmatrd{\varpi_{\pri}^{-1}}{0}{0}{1}g\right).
\]
Then $\f_{\alpha_{\pri}}$ and $\f_{\beta_{\pri}}$ are eigenforms of level $U_1(\mathfrak{N}\pri)$ with $U_{\pri}$-eigenvalues $\alpha_{\pri}$ and $\beta_{\pri}$.

Throughout the paper, we work with the following Bianchi modular forms:

\begin{customconditions}{2.2[$K$]}\label{def:pstab newform}\label{running assumptions}
	Let $\lambda = (k,k)$ and $\n \subset \cO_F$ divisible by each $\pri|p$. Let $\f \in S_\lambda(U_1(\n))$ be a finite slope $p$-regular $p$-stabilised newform, in the sense that:
	\begin{itemize}\setlength{\itemsep}{0pt}
		\item[(C1)]  $\f$ is an eigenform, and for each $\pri|p$, we have $U_{\pri}\f = \alpha_{\pri}\f$ with $\alpha_{\pri} \neq 0$;
		\item[(C2)]  there exist $S \subset \{\pri|p\}$, $\mathfrak{N}$ prime to $S$, and a newform $\f_{\mathrm{new}} \in S_{\lambda}(U_1(\mathfrak{N}))$ such that $\n = \mathfrak{N}\textstyle\prod_{\pri \in S}\pri$ and $\f$ is obtained from $\f_{\mathrm{new}}$ by $\pri$-stabilising for $\pri \in S$;
		\item[(C3)]  for each $\pri \in S$, $X^2 - a_{\pri}(\f_{\mathrm{new}})X + \epsilon_{\f_{\mathrm{new}}}(\pri)N(\pri)^{k+1}$ has distinct roots.
	\end{itemize}
	Note newforms of level $\n$ themselves satisfy (C2),(C3) with $S = \varnothing$.
\end{customconditions}
\stepcounter{definition}

Let $\f$ satisfy Conditions \ref{running assumptions} and let $\Lambda(\f,\varphi)$ denote the (completed) $L$-function of $\f$, normalised as in \cite{Wil17}. Here $\varphi$ runs over Hecke characters of $K$. By \cite[Thm.\ 8.1]{Hid94}, we see that there exists a period $\Omega_{\f} \in \C^\times$ and a number field $E$ containing the Hecke eigenvalues of $\f$ such that if $\varphi$ is an algebraic Hecke character of infinity type $0 \leq (q,r) \leq (k,k)$ with $q,r \in \Z$, we have
\begin{equation}\label{period}
\Lambda(\f,\varphi)/\Omega_{\f} \in E(\varphi),
\end{equation}
where $E(\varphi) \subset \overline{\Q}$ is the extension of $E$ generated by the values of $\varphi$.

\subsection{Base-change}

	Let $f_{\mathrm{new}} \in S_{k+2}(\Gamma_1(N))$ be a classical cuspidal newform of nebentypus $\epsilon_{f_{\mathrm{new}}}$, generating an automorphic representation $\pi$ of $\GL_2(\A_{\Q})$. Let $\mathrm{BC}(\pi)$ be the base-change of $\pi$ to $\GLt(\A_K)$ (see \cite{Lan80}). The \emph{base-change of $f$ to $K$} is the normalised new vector $\f_{\mathrm{new}}$ in $\mathrm{BC}(\pi)$, which is a Bianchi modular form of weight $(k,k)$. If $f_{\mathrm{new}}$ has level $N$, the level of $\f_{\mathrm{new}}$ is an ideal $\n \subset \roi_K$ with $\tfrac{N}{(N,d)}\roi_K|\n|N\roi_K$, recalling $-d = \mathrm{disc}(K)$ (see \cite[\S2.1]{Fri83}); so if $(N,d) = 1$, then $\n = N\roi_K$.  If $f_{\mathrm{new}}$ does not have CM by $K$, then $\f_{\mathrm{new}}$ is cuspidal.

	If $p \nmid N$, let $\alpha_p,\beta_p$ be the roots of $X^2 - a_p(f_{\mathrm{new}})X + \epsilon_{f_{\mathrm{new}}}(p)p^{k+1}$, and for $\pri|p$, let $\alpha_{\pri},\beta_{\pri}$ be the roots of $X^2 - a_{\pri}(\f_{\mathrm{new}})X + \epsilon_{\f_{\mathrm{new}}}(\pri)N(\pri)^{k+1}$. If $p$ is split or ramified in $K$, then we can take $\alpha_{\pri} = \alpha_p, \beta_{\pri} = \beta_p$; and if $p$ is inert, then we may take $\alpha_{\pri} = \alpha_p^2, \beta_{\pri} = \beta_p^2$. If $f_\alpha$ (resp.\ $f_\beta$) is the $p$-stabilisation of $f_{\mathrm{new}}$ corresponding to $\alpha_p$ (resp.\ $\beta_p$), we define its base-change to be the $p$-stabilisation $\f_{\alpha\alpha}$ (resp.\ $\f_{\beta\beta})$ of $\f_{\mathrm{new}}$ corresponding to $\alpha_{\pri}$ (resp.\ $\beta_{\pri}$) for all $\pri|p$.

	We will consider the following classical analogue of Conditions \ref{running assumptions}:
	
	\begin{customconditions}{2.2[$\Q$]}\label{running assumptions Q}
			Let $N$ be divisible by $p$. Let $f \in S_{k+2}(\Gamma_1(N))$ such that:
		\begin{itemize}\setlength{\itemsep}{0pt}
			\item[(C1$'$)] $f$ is an eigenform, and $U_pf = \alpha_{p}f$ with $\alpha_p \neq 0$;
			\item[(C2$'$)]  $f$ is new or the $p$-stabilisation of a newform $f_{\mathrm{new}}$ of level prime to $p$;
			\item[(C3$'$)]  If $f$ is the $p$-stabilisation of $f_{\mathrm{new}}$, then $\alpha_p \neq \beta_p$. If $p$ is inert, $a_p(f_{\mathrm{new}}) \neq 0$;
			\item[(C4$'$)]  $f$ does not have CM by $K$.
		\end{itemize}
	\end{customconditions}

\begin{remark} 
	We explain the extra condition in (C3$'$). We say $f_{\mathrm{new}}$ is $p$-regular if $\alpha_p \neq \beta_p$. Conjecturally, every such $f_{\mathrm{new}}$ is $p$-regular (see e.g.\ \cite[\S1.3]{Bel12}). From the description of $\alpha_{\pri}$ and $\beta_{\pri}$ in terms of $\alpha_p$ and $\beta_p$, we see if $f_{\mathrm{new}}$ is $p$-regular, then its base-change $\f_{\mathrm{new}}$ is $p$-regular \emph{except} if $p$ is inert and $\alpha_p^2 = \beta_p^2$, which occurs if and only if $a_p(f_{\mathrm{new}}) = 0$. Thus if $f$ satisfies Conditions \ref{running assumptions Q}, then its base-change $\f$ (which is cuspidal by (C4$'$)) satisfies Conditions \ref{running assumptions}.
	
	When $p$ splits in $K$, in \cite{BetWil20} Thm.\ \ref{intro:interpolation} is proved for $p$-irregular $\f$.
\end{remark}

\subsection{Classical and overconvergent cohomology}\label{sec:overconvergent cohomology}
\begin{definition}For a ring $R$ and $\lambda = (k,k)$, let $V_\lambda = V_{k,k}(R) \defeq V_k(R)\otimes_{R}V_k(R)$. (We think of $V_\lambda$ as polynomials on $\roi_K\otimes_{\Z}\Zp$ that have degree at most $k$ in each variable). This space has a natural left action of $\GLt(R)^2$ induced by the action of $\GLt(R)$ on each factor by
\[\smallmatrd{a}{b}{c}{d}\cdot P(z) = (a+cz)^k P\left(\textstyle\frac{b+dz}{a+cz}\right),\]
inducing a right action on the dual $V_{\lambda}(R)^* \defeq \mathrm{Hom}(V_{\lambda}(R),R).$ When $R$ is a $K$-algebra, this gives a local system $\VV_{\lambda}(R)^*$ on the space $\Y$ (denoted $\mathcal{L}_1(V_\lambda(R)^*)$ in \cite[Def.\ 4.2]{BW_CJM}). The Hecke algebra $\HH_{\n,p}$ acts on $\hc(\Y,\VV_\lambda(R)^*)$ as usual (e.g. \cite[p.346--7]{Hid88}), and by \cite[\S3,\S8]{Hid94} and \cite{Har87}, we have:
\end{definition}

\begin{theorem}\label{thm:mult one} There is a Hecke-equivariant injection
	\[
		S_{\lambda}(U_1(\n)) \hookrightarrow \hc(\Y,\VV_{\lambda}(\C)^*),\hspace{12pt} \f \mapsto \phi_{\f}.
	\]
Let $\f \in S_{\lambda}(U_1(\n))$ satisfy Conditions \ref{running assumptions}. Then the generalised $\f$-eigenspace $\hc(\Y,\VV_{\lambda}(\C)^*)_{(\f)}$ for $\HH_{\n,p}$ is 1-dimensional, and $\phi_{\f}/\Omega_{\f}$ has coefficients in $\VV_{\lambda}(E)^*,$ for $\Omega_{\f} \in \C^\times$ and $E$ as in (\ref{period}).
\end{theorem}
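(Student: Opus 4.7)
The plan is to build the map $\f \mapsto \phi_\f$ via an explicit differential-form construction, deduce injectivity from an Eichler--Shimura--Harder type decomposition, obtain one-dimensionality from strong multiplicity one together with the regularity hypothesis at primes in $S$, and finally read off the rationality from the fact that the Hecke operators preserve the natural $\overline{\Q}$-structure on compactly supported cohomology.

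First I would construct $\phi_\f$ explicitly. For parallel weight $(k,k)$ there is a standard recipe (going back to Harder, and used by Hida in the cited references) that associates to each Bianchi cusp form a $V_\lambda^*$-valued harmonic differential 1-form on $\uhs$; this descends through the adelic double coset defining $\Y$ to a closed $\VV_\lambda(\C)^*$-valued form whose compactly supported cohomology class is defined to be $\phi_\f$. Hecke-equivariance is built into the construction. For injectivity, I would invoke the Eichler--Shimura--Harder decomposition of $\hc(\Y,\VV_\lambda(\C)^*)$, under which the cuspidal summand is canonically identified with a space of cuspidal automorphic forms in which $\f$ appears with nonzero image; alternatively one can simply observe that the Fourier coefficients of $\f$ can be read off as periods of $\phi_\f$.

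Next I would prove one-dimensionality of $\hc(\Y,\VV_\lambda(\C)^*)_{(\f)}$. Decomposing the cuspidal part of $\hc$ into isotypic components indexed by cuspidal automorphic representations $\pi$ of $\GLt/K$, strong multiplicity one gives that each such $\pi$ contributes with multiplicity equal to $\dim \pi^{f,U_1(\n)}$. If $\f$ is a newform of level $\n$ this dimension is one and we are done. If $\f$ is a regular $p$-stabilisation of a newform $\f'$ of conductor $\mathfrak{N}$ with $\n = \mathfrak{N}\prod_{\pri\in S}\pri$, then the $U_1(\n)$-fixed space in $\pi_{\f'}^f$ has dimension $2^{|S|}$, spanned by the local old-form vectors at each $\pri \in S$. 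The regularity assumption ensures that at each such $\pri$ the operator $U_\pri$ acts semisimply on the two-dimensional local old-form space with the distinct eigenvalues $\alpha_\pri,\beta_\pri$; selecting the appropriate eigenspace successively at each prime in $S$ isolates a one-dimensional line on which the full Hecke algebra acts by the eigensystem of $\f$, and no generalised-eigenspace contribution survives.

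Finally I would extract rationality. Since $\VV_\lambda$ is defined over $\Q$, Hecke operators act on the $\overline{\Q}$-structure $\hc(\Y,\VV_\lambda(\overline{\Q})^*)$; the $(\f)$-generalised eigenspace over $\overline{\Q}$ is therefore defined over the Hecke field $E$, and by flat base-change to $\C$ together with the previous step it remains one-dimensional. Any $E$-rational generator is thus a $\C^\times$-multiple of $\phi_\f$, and one arranges this multiple to be precisely the Hida period appearing in (\ref{period}) by invoking the comparison theorem of \cite{Hid94}, which identifies the $E$-structure on the cohomology with the one produced by algebraic parts of critical $L$-values. The main obstacle is the one-dimensionality step: since Bianchi cusp forms contribute to cohomology in both degrees 1 and 2, one must carefully restrict attention to the cuspidal summand and really use strong multiplicity one, and the regularity hypothesis at primes above $p$ is essential to rule out generalised eigenspace contributions from the old-form subspace.
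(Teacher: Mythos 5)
The paper does not prove this result; it cites \cite[\S3,\S8]{Hid94} and \cite{Har87} and records the statement. Your reconstruction---Eichler--Shimura--Harder for the injection, multiplicity one and Casselman's newform theory together with the $p$-regularity hypothesis for the one-dimensionality of the generalised eigenspace, and the $\overline{\Q}$-rational structure on cohomology for the period/rationality statement---is exactly the argument those references supply, and each step is sound. One point worth making explicit in the one-dimensionality step is that after choosing eigenvalues for the $U_{\pri}$ on the $2^{|S|}$-dimensional oldform space, one must also rule out contributions from other automorphic representations $\pi'$ sharing the same $T_I$-eigenvalues for $I$ coprime to $\n$ (strong multiplicity one for $\GL_2/K$) and from the Eisenstein part of $\hc$ (whose Hecke eigensystems are distinct from cuspidal ones); you gesture at these but do not spell them out. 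That caveat aside, the proposal is correct and faithful to the sources on which the paper relies.
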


Let $R$ be an $(\roi_K\otimes_{\Z}\Zp)$-algebra, and $L$ a finite extension of $\Qp$.

\begin{definition}\label{def:A}
Let $\laA(R)$ be the space of locally analytic functions $\roi_K\otimes_{\Z}\Zp \rightarrow R$. When $R = L$, we equip this space with a weight $\lambda$ action of the semigroup
\[
\Sigma_0(p) \defeq \left\{\smallmatrd{a}{b}{c}{d} \in M_2(\roi_K\otimes_{\Z}\Zp): v_{p}(c) > 0 \hspace{3pt}\forall \pri|p, a\in(\roi_K\otimes_{\Z}\Zp)^\times, ad-bc \neq 0\right\}
\]
by setting $\smallmatrd{a}{b}{c}{d}\cdot \zeta (z) = (a+cz)^k\zeta\left(\frac{b+dz}{a+cz}\right).$  Write $\laA_\lambda(L)$ for $\laA(L)$ with this action. As $\n$ is divisible by each prime above $p$, $U_1(\n)$ acts on $\laA_\lambda(L)$ by projection to $p$.
\end{definition}
\begin{definition}
Let $\Dla(R) \defeq \mathrm{Hom}_{\mathrm{cts}}(\laA(R),R)$ be the space of $R$-valued locally analytic distributions on $\roi_K\otimes_{\Z}\Zp$. When $R = L$ as above, we write $\Dla_{\lambda}(L)$ for this space equipped with the weight $\lambda$ right action of $\Sigma_0(p)$ given by $\mu|\gamma(\zeta) = \mu(\gamma\cdot\zeta).$ Then $\Dla_{\lambda}(L)$ gives rise to a local system on $\Y$, which we denote by $\DD_{\lambda}(L)$. In \cite[Def.\ 4.2]{BW_CJM} this local system is denoted $\mathcal{L}_2(\Dla_\lambda(L))$.
\end{definition}

There is a natural map $\Dla_{\lambda}(L) \rightarrow V_{\lambda}(L)^*$ given by dualising the inclusion of $V_{\lambda}(L)$ into $\laA(L)$. For each $i$, this induces a \emph{specialisation map}
\[\rho_\lambda: \h_{\mathrm{c}}^i(\Y,\DD_{\lambda}(L)) \longrightarrow \h_{\mathrm{c}}^i(\Y,\VV_{\lambda}(L)^*).
\]
\begin{definition}\label{def:non-critical}
Let $\f \in S_{\lambda}(U_1(\n))$ be an eigenform. We say $\f$ is \emph{non-critical} if $\rho_\lambda$ becomes an isomorphism (for each $i$) upon restriction to the generalised eigenspaces of the Hecke operators at $\f$. If $\f$ is non-critical, let $\Psi_{\f} \in \hc(\Y,\DD_{\lambda}(L))$ denote the unique lift of $\phi_{\f}/\Omega_{\f}$ considered with $L$-coefficients, where we assume $L$ contains all embeddings of the fields $K$ and $E$ (from \eqref{period}).
\end{definition}

\begin{definition}
If $\f \in S_{\lambda}(U_1(\n))$ is an eigenform, we say $\f$ has \emph{small slope} (or \emph{non-critical slope}) if $v_{p}(\alpha_{\pri})< (k+1)/e_{\pri}$ for all $\pri|p$, where $U_{\pri}\f = \alpha_{\pri}\f$ and $e_{\pri}$ is the ramification degree of $\pri$.
\end{definition}

\begin{theorem}\cite[Thm.\ 8.7]{BW_CJM}\label{control theorem}
If $\f$ has small slope, then $\f$ is non-critical.
\end{theorem}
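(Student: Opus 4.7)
The strategy I would follow is the classical Stevens--Pollack--Urban control theorem, adapted to the Bianchi setting to accommodate the product structure at $p$. Begin with the short exact sequence of $\Sigma_0(p)$-modules
\[
0 \longrightarrow \mathcal{K}_\lambda \longrightarrow \DD_\lambda(L) \xrightarrow{\ \rho_\lambda\ } V_\lambda(L)^* \longrightarrow 0,
\]
where $\mathcal{K}_\lambda$ is the space of locally analytic distributions on $\roi_K \otimes \Zp$ annihilating polynomials of bidegree at most $(k,k)$. Passing to the associated local systems on $\Y$ and taking the long exact sequence in compactly supported cohomology, the claim reduces to showing that the generalised eigenspace of $\h^i_{\mathrm{c}}(\Y,\mathcal{K}_\lambda)$ attached to $\f$ vanishes for each $i$. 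Since $\f$ has small slope, it suffices to exhibit, for some $\pri \mid p$, a lower bound strictly greater than $v_{\pri}(\alpha_\pri)$ on the $U_\pri$-slopes of $\h^i_{\mathrm{c}}(\Y,\mathcal{K}_\lambda)$.

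Next I would filter $\mathcal{K}_\lambda$ using the product decomposition $\roi_K \otimes \Zp \cong \prod_{\pri|p}\roi_{K,\pri}$. Dually to the expansion of locally analytic functions into Taylor polynomials in each variable, one obtains a $\Sigma_0(p)$-stable filtration $\mathrm{Fil}^\bullet \mathcal{K}_\lambda$ whose graded pieces are finite free modules built from dualising monomials of higher bidegree. A direct computation of the action of $\smallmatrd{\pi_\pri}{0}{0}{1}$ on such monomials shows that on $\mathrm{gr}^j_\pri\mathcal{K}_\lambda$, this operator acts by a power $\pi_\pri^{k+1+j}$ in the dual direction, so that the $U_\pri$-slopes on each graded piece are bounded below by $(k+1+j)/e_\pri$. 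Taking the limit over the filtration and using compactness of $U_\pri$ on overconvergent distributions, one deduces that all $U_\pri$-slopes on $\h^i_{\mathrm{c}}(\Y,\mathcal{K}_\lambda)$ are $\geq (k+1)/e_\pri$. Combined with the small slope hypothesis $v_{\pri}(\alpha_\pri) < (k+1)/e_\pri$, this forces the $\f$-generalised eigenspace of $\h^i_{\mathrm{c}}(\Y,\mathcal{K}_\lambda)$ to vanish, yielding the desired isomorphism on eigenspaces.

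The main obstacle is making the slope estimate work uniformly given the Bianchi product structure. When $p$ is split, one has independent operators $U_\pri$ and $U_{\pribar}$, and the filtration decomposes compatibly, so the argument reduces to a one-variable computation in each factor. When $p$ is inert or ramified, there is a single $U_\pri$ and the ``polynomial'' space $V_\lambda$ corresponds to functions of bidegree $(k,k)$ with respect to the two $\Qp$-embeddings of $\roi_{K,\pri}$; the slope of $U_\pri$ must then be measured with respect to the normalised valuation on $L$, producing the factor $1/e_\pri$ in the bound. Verifying this requires care in how one decomposes $\laA(L)$ into two-variable Taylor components and tracks the valuations of the matrix coefficients of $U_\pri$; once this bookkeeping is done, the Stevens-style slope bound on each graded piece goes through, and the result follows. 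I would otherwise simply invoke \cite[Thm.~8.7]{BW16}, where precisely this analysis has been carried out.
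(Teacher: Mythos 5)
The paper gives no proof of this statement at all: the theorem is stated with the citation \cite[Thm.\ 8.7]{BW16} in place of a proof. You correctly end by deferring to that reference, so your proposal is consistent with what the paper actually does, and your sketch is a reasonable account of the Stevens--Pollack--Urban control argument that underlies the BW16 result.

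One claim in your sketch is, however, imprecise in a way that matters when $p$ splits. You say it ``suffices to exhibit, for some $\pri \mid p$, a lower bound strictly greater than $v_\pri(\alpha_\pri)$ on the $U_\pri$-slopes of $\h^i_{\mathrm{c}}(\Y,\mathcal{K}_\lambda)$.'' No single prime yields such a bound on all of $\mathcal{K}_\lambda$. In the split case the kernel decomposes as
\[
\mathcal{K}_\lambda \;=\; \mathcal{K}_\pri\,\hat{\otimes}\,\Dla_{\pribar} \;+\; \Dla_\pri\,\hat{\otimes}\,\mathcal{K}_{\pribar},
\]
and on the second summand $U_\pri$ can have arbitrarily small slope (the factor $\Dla_\pri$ contributes slope-$0$ vectors). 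So the correct reduction filters $\mathcal{K}_\lambda$ by a two-step filtration, for instance
\[
0 \longrightarrow \mathcal{K}_\pri\,\hat{\otimes}\,\Dla_{\pribar} \longrightarrow \mathcal{K}_\lambda \longrightarrow V_k^*\otimes \mathcal{K}_{\pribar} \longrightarrow 0,
\]
and applies the slope bound for $U_\pri$ on the subobject and for $U_{\pribar}$ on the quotient. This is precisely why the small-slope hypothesis is imposed at \emph{every} prime above $p$, not just one, and why the definition in the paper reads ``for all $\pri\mid p$.'' The remainder of your discussion of the inert/ramified bookkeeping and the $1/e_\pri$ normalisation is on the right track, and the full details are indeed in BW16.
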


\subsection{Modular symbols and Mellin transforms}\label{sec:bms}
Let $\Delta_0 \defeq \mathrm{Div}^0(\Proj(K))$ denote the space of `paths between cusps' in $\uhs$, and let $V$ be any right $\Sigma_0(p)$-module. For a discrete subgroup $\Gamma\subset\Sigma_0(p)\cap\SLt(K)$, define the space of \emph{$V$-valued modular symbols for $\Gamma$} to be the space
\[\symb_{\Gamma}(V) \defeq \mathrm{Hom}_\Gamma(\Delta_0,V)\]
of functions satisfying the $\Gamma$-invariance property that $(\phi|\gamma)(\delta)\defeq \phi(\gamma \delta)|\gamma = \phi(\delta)$ for all $\delta\in\Delta_0, \gamma\in\Gamma,$ where $\Gamma$ acts on the cusps by $\smallmatrd{a}{b}{c}{d}\cdot r = (ar+b)/(cr+d).$ 
It also acts naturally on $\uhs$, and by \cite[Prop.\ 8.2]{BW17} we have an isomorphism $\hc(\Gamma\backslash\uhs, \VV) \cong \symb_\Gamma(V),$ where $\VV$ is the corresponding local system on $\Gamma\backslash\uhs$.

The space $\Y$ decomposes as a disjoint union of spaces $\Gamma_i\backslash\uhs$, for $\Gamma_i\subset \GLt(K)$ discrete subgroups indexed by $i \in \cl_K = \cl_K(\roi_K)$ (see \cite[\S4.2.2]{BW_CJM}).  Each $\Gamma_i$ depends on a choice of representative $t_i \in \A_K^{f,\times}$ of $i \in \cl_K$. From the above this induces a (non-canonical) decomposition
\begin{equation}\label{eq:decomposition}
			\hc(\Y,\VV) \cong \textstyle\bigoplus_{i\in\mathrm{Cl}_K}\symb_{\Gamma_i}(V).
	\end{equation}
 When $V$ is a $\Sigma_0(p)$-module, there is a natural action of the Hecke algebra $\HH_{\n,p}$ on the direct sum, defined as in \cite[\S3.3]{Wil17}, and \eqref{eq:decomposition} is Hecke-equivariant.


Let $\mathrm{Cl}_K(p^\infty) \defeq K^\times\backslash\A_K^\times/\C^\times\prod_{v\nmid p}\roi_v^\times$.  We have a decomposition $\cl_K(p^\infty) = \sqcup_{i\in\cl_K} \cl_K^i(p^\infty)$, where $\cl_K^i(p^\infty)$ is the fibre of $i$ under the canonical surjection $\cl_K(p^\infty) \twoheadrightarrow \cl_K$. The choice of representative $t_i \in \A_K^{f,\times}$ identifies $\cl_K^i(p^\infty)$ non-canonically with $(\roi_K\otimes_{\Z}\Zp)^\times/\roi_K^\times$. 
Let $R$ be an $(\roi_K\otimes_{\Z}\Zp)$-algebra such that $\Dla(R)$ carries a right action of $U_1(\n)$, hence giving rise to a local system on $\Y$. 
Let $\Psi \in \hc(\Y,\DD(R))$, and write
$\Psi = (\Psi^1,...,\Psi^h)$ with each $\Psi^i \in \symb_{\Gamma_i}(\Dla(R))$. Define, for $i,j \in \cl_K$, a distribution
$\mu_i(\Psi^j) \in \Dla(\cl_K^i(p^\infty),R)$ as follows. We have a distribution $\Psi^j(\{0\}-\{\infty\})|_{(\roi_{K}\otimes_{\Z}\Zp)^\times}$
on $(\roi_K\otimes_{\Z}\Zp)^\times$. This restricts to a distribution on $(\roi_K\otimes_{\Z}\Zp)^\times/\roi_K^\times$, which gives the distribution $\mu^i(\Psi^j)$ on $\cl_K^i(p^\infty)$ under the identification above. Then define the \emph{Mellin transform} of $\Psi$ to be the ($R$-valued) locally analytic distribution on $\cl_K(p^\infty)$ given by
\[
\mel(\Psi) \defeq \textstyle\sum\limits_{i\in\cl_K} \mu_i(\Psi^i) \in \Dla(\cl_K(p^\infty),R).
\]
 A simple check identical to the arguments given in \cite[Prop. 9.7]{BW_CJM} shows that the distribution $\mel(\Psi)$ is independent of the choice of class group representatives.
\begin{definition}\label{def:non-critical Lp}
Let $\f$ be a non-critical cuspidal Bianchi eigenform of level $U_1(\n)$ with associated overconvergent class $\Psi_{\f}$. The \emph{$p$-adic $L$-function of $\f$} is the Mellin transform $L_p(\f) \defeq \mel(\Psi_{\f}) \in \Dla(\cl_K(p^\infty),L)$.
\end{definition}
Given an algebraic Hecke character $\varphi$ of $K$ of conductor $\ff = \prod_{\pri|p}\pri^{r_{\pri}} |(p^\infty)$, there is a natural associated character $\varphi_{p-\mathrm{fin}}$ of $\cl_K(p^\infty)$ (see \cite[\S7.3]{Wil17}). Let $U_{\ff} = \prod_{\pri|p}U_{\pri}^{r_{\pri}}$. The main theorem of \cite{Wil17} is the following (Thm.\ 7.4 \emph{op.\ cit}.):
\begin{theorem}\label{thm:Wil17}
For any Hecke character $\varphi$ of $K$ of conductor $\ff|(p^\infty)$ and infinity type $0 \leq (q,r) \leq (k,k)$, we have
\begin{equation}\label{eq:non-critical interpolation}
L_p(\f,\varphi_{p-\mathrm{fin}}) = \left(\textstyle\prod_{\pri|p}Z_{\pri}(\varphi)\right)A(\f,\varphi)\Lambda(\f,\varphi),
\end{equation}
for
\[ 
Z_{\pri}(\varphi) \defeq \left\{\begin{array}{cl}1-[\alpha_{\pri}\psi(\pri)]^{-1} &: \pri\nmid \ff\\
	1 &: \text{else}\end{array}\right. \hspace{3pt}\text{and}\hspace{6pt} A(\f,\varphi) \defeq  \left[\frac{\varphi(x_{\ff})d\widetilde{\tau}(\varphi^{-1})\unitsize}{(-1)^{k+q+r}2\varphi_{\ff}(x_{\ff})\alpha_{\ff}\Omega_{\f}}\right].
\]
Here $x_{\ff}$ is an explicit idele representing $\ff$, $\varphi_{\ff}$ is the restriction of $\varphi$ to $\prod_{v|\ff}K_v^\times$, $\widetilde{\tau}(\varphi^{-1})$ is the Gauss sum from \cite[\S1.2.3]{Wil17}, and $U_{\ff} \f = \alpha_{\ff}\f$. For $h_{\pri} = v_{p}(\alpha_{\pri})$, $L_p(\f)$ is $(h_{\pri})_{\pri|p}$-admissible \cite[Defs. 5.10,6.14]{Wil17}. If $\f$ has small slope, $L_p(\f)$ is unique with these interpolation and growth properties.
\end{theorem}

\begin{remark}\label{rem:twisted}
	Let $\phi$ be a finite order Hecke character of $K$. We obtain the twisted $p$-adic $L$-functions $L_p^\phi$ of the introduction by using twisted Mellin transforms. If $\phi$ has principal conductor $(c)$ prime to $p$, then for $b \in (\roi_K/c)^\times$, let $\mu_i^b(\Psi^i) \defeq \mu_i(\Psi^i|\smallmatrd{1}{b}{0}{c})$, and define $\mathrm{Mel}^\phi(\Psi) \defeq \sum_{b \in (\roi_F/c)^\times} \sum_{i\in\cl_K}\phi(b)\mu_i^b(\Psi^i)$. Then via \cite[\S3.4]{BW17}, we have $L_p^\phi(\f) = \mathrm{Mel}^\phi(\Psi_{\f})$. In general, we follow \cite[\S7.1]{Wil17}; if $\phi$ has conductor $\mathfrak{c}$, then write $\mathfrak{c} \cdot [t_i] = (c_i)\cdot [t_{j_i}]$ for $c_i \in K^\times$ and $[t_i],[t_{j_i}] \subset K$ the fractional ideals corresponding to the ideles $t_i,t_{j_i}$. Then define 
		\[
			\mathrm{Mel}^\phi(\Psi) \defeq \sum\limits_{b \in (\roi_F/\mathfrak{c})^\times} \sum\limits_{i\in\cl_K}\phi(b)\mu_i^b(\Psi^{j_i}), \hspace{12pt} \mu_i^b(\Psi^{j_i}) \defeq \mu_i\left(\Psi^{j_i}|\smallmatrd{1}{d_b}{0}{c_i}\right),
		\]
	 with $d_b \in [t_i]$ for all $i$ and $d_b \equiv b \newmod{\mathfrak{c}}$ (cf.\ \cite[\S7.1]{Wil17}). This is independent of the choice of $c_i$, and $L_p^\phi(\f) \defeq \mathrm{Mel}^\phi(\Psi_{\f}) \in \mathcal{D}(\cl_K(p^\infty),L)$.
\end{remark}

\begin{remark}\label{rem:missing cases}
	Suppose $p\roi_K = \pri\pribar$ is split. Let $\f_{\mathrm{new}}$ be base-change of weight $\lambda = (k,k)$ and  level $\mathfrak{N}$ prime to $p$. The Hecke polynomials at $\pri$ and $\pribar$ coincide; let $\alpha,\beta$ be the roots, so $\alpha+ \beta = a_{\pri}(\f_{\mathrm{new}})$ and $v_p(\alpha\beta) = k+1$. When $\alpha \neq \beta$, there are four  stabilisations $\f_{\alpha\alpha}, \f_{\alpha\beta},\f_{\beta\alpha},\f_{\beta\beta}$ to level $\mathfrak{N}p$. If $v_{p}(a_{\pri}(\f_{\mathrm{new}})) > 0$, then $0 < v_p(\alpha), v_p(\beta) <k+1$, so each stabilisation is small slope, giving four $p$-adic $L$-functions attached to $\f$. If $k = 0$, these are the $p$-adic $L$-functions of \cite[\S5]{Loe14}. If $v_{p}(a_{\pri}(\f)) = 0$, then take $v_p(\alpha) = 0, v_p(\beta) = k+1$; then only $f_{\alpha\alpha}$ has small slope, and \cite{Wil17} does not give $p$-adic $L$-functions for $\f_{\alpha\beta},\f_{\beta\alpha}$ or $\f_{\beta\beta}$ .
	
	For $p$ inert, the Hecke roots $\alpha,\beta$ satisfy $v_{p}(\alpha\beta) = 2(k+1)$; so at least one of $v_{p}(\alpha),v_{p}(\beta)$ is $\geq k+1$, and there is \emph{always} a missing $p$-adic $L$-function.
\end{remark}

%
%
\section{The Bianchi eigenvariety}
We summarise results on the Bianchi eigenvariety, following \cite{Han17}. Hansen's results are stated for singular cohomology, but in \cite[\S3.3]{Han17} he gives tools to produce (identical) proofs for cohomology with compact support.

\subsection{Distributions over the weight space}
\begin{definition}\label{def:weight space}
The \emph{Bianchi weight space of level $\n$} is the rigid analytic space $\W_{K,\n}$ whose $L$-points, for $L\subset \Cp$ any sufficiently large extension of $\Qp$, are 
\[\W_{K,\n}(L) = \mathrm{Hom}_{\mathrm{cts}}((\roi_K\otimes_{\Z}\Zp)^\times/E(\n),L^\times),\]
where $E(\n) \defeq\{\epsilon \in \roi_K^\times: \epsilon \equiv 1 \newmod{\n}\}.$ This can be identified with $Z(\GL_2(K)) \cap U_1(\n)$, hence this invariance ensures the existence of non-trivial `weight $\lambda$' local systems on $Y_1(\n)$. Since the level will typically be clear from context, we will usually drop the subscript $\n$ from the notation. 
\end{definition}

A weight $\lambda \in \W_K(L)$ is \emph{classical} if it can be written in the form $\epsilon \lambda^{\mathrm{alg}}$, where $\epsilon$ is a finite order character and $\lambda^{\mathrm{alg}}(z) = z^{\mathbf{k}} = z^{k_1}\overline{z}^{k_2}$, where $\mathbf{k} = (k_1,k_2) \in \Z_{\geq 0}^2$.
\begin{remark}
	This is a slightly smaller space of `null weights' than that considered in Hansen, who uses the characters on the torus $T(\Zp)$ of diagonal matrices in $\GL_2(\roi_K\otimes_{\Zp}\Zp)$. The two spaces are essentially the same after twisting by a power of the norm, and the smaller space allows clearer comparison with the Coleman--Mazur eigencurve. 
\end{remark}

For each $\lambda \in \W_K(L)$, as before one can define a weight $\lambda$ action of $\Sigma_0(p)$ on $\mathcal{A}(L)$ by $\gamma\cdot_{\lambda}f(z) = \lambda(a+cz)f\left(\frac{b+dz}{a+cz}\right),$ and hence a dual action on $\Dla(L)$. We can vary these action in families over $\W_K$. Let $\Omega \subset \W_K$ be an affinoid, equipped with a tautological character
\[\chi_\Omega : (\roi_K\otimes_{\Z}\Zp)^\times \longrightarrow \roi(\Omega)^\times,\]
such that for any $\lambda\in \Omega(L),$ the homomorphism $\lambda : (\roi_K\otimes_{\Z}\Zp)^\times \rightarrow L^\times$ factors as 
$(\roi_K\otimes_{\Z}\Zp)^\times \rightarrow \roi(\Omega)^\times \rightarrow L^\times,$
where the second map is evaluation at $\lambda$. We can thus equip $\laA_\Omega \defeq \laA(\roi(\Omega))$ with a `weight $\Omega$' action of $\Sigma_0(p)$ given by
\begin{equation}\label{def:A family}
\gamma\cdot_{\Omega} f(z) = \chi_\Omega(a+cz)f\left(\textstyle\frac{b+dz}{a+cz}\right).
\end{equation}
Dually we get an action on $\Dla_\Omega \defeq \Dla(\roi(\Omega))$, giving a local system $\DD_\Omega$ on $\Y$.

If $\Sigma \subset \Omega$ is a closed subset, then $\Dla_\Omega \otimes_{\roi(\Omega)}\roi(\Sigma) \cong \Dla_\Sigma$ (see \cite[\S2.2]{Han17}). In particular, if $\lambda \in \Omega(L)$ corresponds to a maximal ideal $\m_\lambda \subset \roi(\Omega)$, then $\Dla_\Omega\otimes_{\roi(\Omega)}\roi(\Omega)/\m_\lambda \cong \Dla_\lambda(L)$.

\subsection{The eigenvariety and base-change functoriality}
One of the main results of \cite{Han17} specialises, in our setting, to the following. Recall from above that his results apply also to compactly supported cohomology, and recall $\HH_{\n,p}$ from Def.\ \ref{def:hecke alg}. We write $\h_{\mathrm{c}}^{*}$ for total cohomology.

\begin{theorem}[Hansen]\label{t:hansen's eigencurve}
There exists a separated rigid analytic space $\E_{\n}$, and a morphism $w: \E_{\n} \rightarrow \W_K$, such that for each finite extension $L$ of $\Q_p$, the $L$-points $y$ of $\E_{\n}$ with $w(y) = \lambda \in \W_K(L)$ are in bijection with systems $\psi_y : \HH_{\n,p} \to L$ of Hecke eigenvalues occurring in $\h_{\mathrm{c}}^*(\Y, \DD_{\lambda}(L))$. 
\end{theorem}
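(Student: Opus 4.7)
The plan is to apply the eigenvariety machine, in the form developed by Buzzard and adapted to compactly supported overconvergent cohomology by Ash--Stevens, Urban, and (in the most general form for complexes) Hansen.

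First, I would fix an affinoid open $\Omega \subset \W_K$ with its universal character $\chi_\Omega$. Choosing a finite cover of $\Y$ by contractible opens (of Borel--Serre type), one represents the compactly supported cohomology with coefficients in the local system $\DD_\Omega$ by a bounded complex $C_\Omega^\bullet$ of orthonormalisable Banach $\roi(\Omega)$-modules, functorially in $\Omega$. These complexes are the basic object: the construction must remember all of $\h^*_c$, not just one cohomological degree.

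Next, the Hecke operator $U_p \defeq \prod_{\pri|p} U_\pri$ acts on $C_\Omega^\bullet$ by a completely continuous endomorphism (after possibly replacing $C_\Omega^\bullet$ by a quasi-isomorphic complex). One then invokes Coleman's Fredholm theory for complexes: the characteristic power series $F_\Omega(T) \defeq \det(1 - TU_p \,|\, C_\Omega^\bullet)$, defined as the alternating product of the degreewise Fredholm determinants, lies in $\roi(\Omega)\{\{T\}\}$, is independent of the perfect representative chosen, and is compatible with base change in $\Omega$. Hence its vanishing loci glue across a cover of $\W_K$ to a global \emph{spectral variety} $\mathcal{Z} \to \W_K$.

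Third, I would cover $\mathcal{Z}$ by slope-adapted opens $\mathcal{Z}^{\leq h}_\Omega$, on which $F_\Omega$ factors as $Q(T) R(T)$ with $Q$ a polynomial of degree $d$ whose roots have slope $\leq h$ and $R$ a unit on the relevant region. Buzzard's machine then produces a finite flat map $\mathcal{Z}^{\leq h}_\Omega \to \Omega$ and a slope-$\leq h$ direct summand $C^{\bullet,\leq h}_\Omega$ which is a perfect complex of $\roi(\Omega)$-modules. Taking the relative rigid-analytic spectrum over $\mathcal{Z}^{\leq h}_\Omega$ of the image of the spherical (and $U_\pri$) Hecke algebra inside $\mathrm{End}_{\roi(\Omega)}(\h^*(C^{\bullet,\leq h}_\Omega))$ produces a local piece of the eigenvariety. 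These pieces glue canonically to yield $\E_\n$ together with the weight map $w$; separatedness is automatic from Buzzard's gluing. The asserted bijection is then built into the construction: for $\lambda \in \Omega(L)$ with maximal ideal $\m_\lambda \subset \roi(\Omega)$, one has $\DD_\Omega \otimes_{\roi(\Omega)} \roi(\Omega)/\m_\lambda \cong \DD_\lambda(L)$, and the specialisation sends the slope-$\leq h$ summand at $\m_\lambda$ isomorphically onto the slope-$\leq h$ part of $\h^*_c(\Y, \DD_\lambda(L))$; under this identification, points of $\E_\n$ above $\lambda$ correspond to maximal ideals of the Hecke algebra, i.e.\ to finite slope Hecke eigensystems.

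The main technical obstacle, and the reason the original Ash--Stevens framework does not directly suffice here, is that Bianchi cohomology is non-zero in two degrees (namely 1 and 2, as foreshadowed in the introduction). One therefore cannot run the machine on a single Banach module and must instead make sense of compact operators, Fredholm series, and slope decompositions for perfect complexes of orthonormalisable Banach modules, and verify that these notions are independent of the chosen resolution. This is the heart of Hansen's contribution, essentially a $K$-theoretic argument in the category of such complexes, and is what makes Theorem \ref{t:hansen's eigencurve} go through uniformly for $\h^*_c$ rather than just $\hc$.
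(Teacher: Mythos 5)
The paper does not prove this theorem; it is imported directly from Hansen \cite{Han17}, with the observation that his arguments, stated there for singular cohomology, adapt verbatim to compactly supported cohomology. Your outline of Hansen's construction is substantially faithful --- complexes of orthonormalisable Banach $\roi(\Omega)$-modules computing $\h^*_{\mathrm{c}}$, compactness of $U_p$, Fredholm theory for complexes, slope-adapted covers of the spectral variety, and Buzzard-style gluing --- and you correctly identify why the single-module Ash--Stevens framework does not suffice here: cuspidal Bianchi cohomology occupies degrees $1$ and $2$ simultaneously, so the whole apparatus must run on perfect complexes rather than on one Banach module.

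There is, however, one concrete error. You define the characteristic power series as the \emph{alternating} product $\prod_i \det(1-TU_p \mid C^i_\Omega)^{(-1)^i}$ and assert it lies in $\roi(\Omega)\{\{T\}\}$. Both halves of this are wrong: an alternating product of Fredholm series is a ratio of entire series, not itself entire; and it would have \emph{poles}, rather than zeros, at finite-slope eigenvalues occurring in odd cohomological degree --- precisely where the cuspidal classes that drive the $p$-adic $L$-functions of this paper live. (The alternating product is the quasi-isomorphism-invariant, K-theoretically natural quantity, but that is exactly why it cannot serve here: the cancellations it encodes would erase the part of the spectrum one most needs.) Hansen instead cuts out the spectral variety by the \emph{unsigned} product $\prod_i \det(1-TU_p \mid C^i_\Omega)$, which is an honest Fredholm series whose zero locus catches eigenvalues from every degree; this power series does depend on the chosen perfect complex, but the eigenvariety itself --- the support of the coherent cohomology sheaf $\mathscr{M}$ together with its Hecke structure --- does not.
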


The level $\n$ will often be clear from context, so we usually drop the subscript.

Let $\W_{\Q}$ denote the (null) weight space for $\GLt/\Q$, that is, the rigid analytic space whose $L$-points are $\W_{\Q}(L) = \mathrm{Hom}_{\mathrm{cts}}(\Zp^\times,L^\times)$ for $L \subset \Cp$. For the rest of the paper, let $\CC$ be the Coleman--Mazur eigencurve of tame level $\Gamma_1(N')$, for $N'$ the prime-to-$p$ part of $N$. Abusing notation, we call the weight map $w : \CC \to \W_{\Q}$. 
	
	A point $y \in \E_{\n}$ (resp. $\CC$) is \emph{classical} if there is a Bianchi (resp.\ classical) eigenform $\f_y$ (resp.\ $f_y$) of weight $w(y)$ such that $t\f_y = \psi_y(t)\f_y$ (resp.\ $tf_y = \psi_y(t)f_y$) for all $t \in \HH_{\n,p}$ (resp.\ for all classical Hecke operators $t$). A classical point $y$ satisfies Conditions \ref{running assumptions} if $\f_y$ does (resp.\ Conditions \ref{running assumptions Q} if $f_y$ does).

 There is (for any level $\n$) a closed immersion $\W_\Q \hookrightarrow \W_{K,\n}$ induced by the norm map $(\roi_K\otimes_{\Z}\Zp)^\times \rightarrow \Zp^\times$. Then \cite[Thm.\ 5.1.6]{Han17} or \cite[Thm.\ 3.2.1]{JoNew} combined with \cite[\S4.3]{JoNew} gives:

\begin{theorem} \label{thm:base-change}
There is a finite morphism $\mathrm{BC}_N: \CC \longrightarrow \E_{N\roi_K}$ of rigid spaces interpolating base-change functoriality on classical points. More precisely, if $y \in \CC(L)$ corresponds to a classical modular form $f$, then $\mathrm{BC}_N(y) \in \E(L)$ corresponds to the (stabilisation to level $N\roi_K$ of the) system of eigenvalues attached to the base-change of $f$ to $\GLt/K$.
\end{theorem}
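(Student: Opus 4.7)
The plan is to apply a general interpolation theorem for eigenvarieties: a morphism between eigenvarieties is determined by compatible data on weight spaces and (abstract) Hecke algebras together with the requirement that it reproduce a prescribed map on a suitable set of classical points. The only substantial input is that \emph{classical} base-change furnishes exactly this data.

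First I would fix the weight side: the norm map $(\roi_K \otimes_{\Z} \Zp)^\times \to \Zp^\times$ yields the closed immersion $\iota : \W_{\Q} \hookrightarrow \W_{K,N\roi_K}$ noted in the excerpt, and it suffices to construct a morphism $\CC_N \to \iota^*\E_{N\roi_K}$ covering the identity on $\W_{\Q}$. Second, I would specify the morphism of abstract spherical Hecke algebras. Classical base-change sends unramified Satake parameters $\{\alpha_\ell,\beta_\ell\}$ at a rational prime $\ell \nmid N$ to $\{\alpha_\ell^{f_\pri},\beta_\ell^{f_\pri}\}$ at each prime $\pri|\ell$ of $K$, where $f_\pri$ is the residue degree; this translates directly into an explicit polynomial formula expressing each Bianchi Hecke operator $T_\pri$ in terms of the classical operators $T_\ell$ and diamond operators, with the analogous formula at $U_\pri$ for $\pri|p$ dictated by local-global compatibility (so that $p$-stabilisations are sent to $p$-stabilisations with matching eigenvalues).

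Third, together these compatibilities equip every classical $p$-stabilised newform $f \in \CC_N(L)$ with a canonical image $\mathrm{BC}_N(f) \in \E_{N\roi_K}(L)$: the weight is $\iota$ of the weight of $f$, and the Hecke system is obtained by applying the above formula. Since classical points are Zariski-dense in $\CC_N$, Hansen's interpolation theorem \cite[Thm.\ 5.1.6]{Han17}, in the form refined by Johansson--Newton \cite{JoNew}, promotes this set-theoretic rule to a unique morphism of rigid analytic spaces $\mathrm{BC}_N : \CC_N \to \E_{N\roi_K}$ covering $\iota$. Finiteness then follows because the image lies in the one-dimensional rigid subspace $\iota^*\E_{N\roi_K}$, classical base-change has finite fibres on classical points (a base-change form is the image of at most two classical forms, swapped by $\mathrm{Gal}(K/\Q)$), and the finite-slope eigenvariety structures ensure that the induced map between spectral data is properly supported.

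The main obstacle, and the reason \cite{Han17} alone does not suffice, is precisely the failure of Zariski-density of classical points in the target emphasised in the introduction: the naive rigidification argument would require density of classical points on $\E_{N\roi_K}$, which fails because the cuspidal part of the Bianchi eigenvariety is one-dimensional over a two-dimensional weight space. The input of \cite{JoNew} is to recast the interpolation theorem so that density on the \emph{source} $\CC_N$ suffices, using the fact that the receiving eigenvariety is cut out from overconvergent cohomology in a way that a classical Bianchi system of eigenvalues is uniquely determined by its Hecke data across the degrees $\hc$ and $\h^2_{\mathrm{c}}$ simultaneously appearing in Hansen's construction.
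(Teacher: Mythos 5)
Your high-level strategy — fix the weight-space immersion via the norm map, define a morphism of abstract spherical Hecke algebras encoding classical base-change of Satake parameters, verify compatibility at a Zariski-dense set of classical points of $\CC_N$, and invoke Hansen's interpolation machinery together with Johansson–Newton — is the same route the paper takes (the paper gives no argument beyond citing Hansen's Thm.~5.1.6 and \cite{JoNew}, noting the idea goes back to Chenevier).

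Two of your supporting claims are, however, off the mark, and one gap should be noted. First, the obstacle you identify is not a real one: Hansen's Thm.~5.1.6 (like the closed-immersion version 5.1.2 and its reformulation in \cite[Thm.~3.2.1]{JoNew}) already only asks for a Zariski-dense accumulation set of classical points on the \emph{source} $\CC_N$, together with a divisibility condition on the relevant Fredholm determinants; Zariski density of classical points in the target $\E_{N\roi_K}$ plays no role in the rigidification. So the one-dimensionality of the cuspidal Bianchi eigenvariety over the two-dimensional weight space is not what forces the appeal to Johansson–Newton. The actual inputs from \cite{JoNew} are (a) the explicit morphism of abstract unramified Hecke algebras $\phi : \bH_{\n,p} \to \bH_{\Q,N,p}$ encoding base-change (see \S4.3 \emph{op.~cit.}, as used in the proof of Prop.~\ref{prop:smooth}), and (b) a tightened form of Hansen's interpolation theorem. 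Second, your finiteness argument is not a proof: finiteness of fibres on a Zariski-dense set of classical points does not imply finiteness of a morphism of rigid spaces; finiteness here is part of the conclusion of Hansen's theorem itself, coming from the construction of the morphism via the spectral/Hecke data, not from an a posteriori fibre count. Finally, you should flag (as Remark~\ref{rem:zd pstab} and Prop.~\ref{prop:zd pstab} do) the level subtlety on the $K$-side: $\mathrm{BC}_N(x)$ can land on a doubly $p$-stabilised point at level $N\roi_K$, and one must shrink to a lower level $\n'$ over $K$ to keep a Zariski-dense set of regular $p$-stabilised newforms.
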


This is based on an idea of Chenevier \cite{Che05}. We actually require a refined version of this result, defined locally, giving more precise control over the level. Let $x$ be a classical point in $\CC$ satisfying Conditions \ref{running assumptions Q}, corresponding to a level $N$ eigenform $f_x$. If the level of its base-change $\f_x$ is not $N\roi_K$, i.e.\ if the level drops under base-change, then $\mathrm{BC}_N(x) \in \E_{N\roi_K}$ corresponds to a Bianchi form that does \emph{not} satisfy (C2) of Conditions \ref{running assumptions}: it has also been stabilised at some other $\mathfrak{q}\nmid p$ dividing $N\roi_K$. Locally at such $x \in \CC$, however, it is possible to define a $p$-adic base-change map $\mathrm{BC}_{\n}$ that does send points satisfying Conditions \ref{running assumptions Q} to points satisfying Conditions \ref{running assumptions}.

\begin{proposition}\label{prop:zd pstab}
	Let $x \in \CC$ be a classical point satisfying Conditions \ref{running assumptions Q}.  Then there exists a neighbourhood $V_{\Q} \subset \CC$ of $x$, an ideal $\n \subset \roi_K$ divisible by each $\pri|p$ and with $\tfrac{N}{(N,d)}\roi_K | \n | N\roi_K$, and a finite morphism $\mathrm{BC}_{\n} : V_{\Q} \to \E_{\n}$ that interpolates base-change functoriality on classical points and such that every classical point in $\mathrm{BC}_{\n}(V_{\Q})$ satisfies Conditions \ref{running assumptions}.
\end{proposition}
\begin{proof}	
	We closely follow the strategy of \cite[Lem.\ 2.7]{Bel12}. Let $V_{\Q}$ be a neighbourhood of $x$ in $\CC$.  Let $\rho_{V_{\Q}} : G_{\Q} \to \GL_2(\roi(V_{\Q}))$ be the attached Galois representation over $V_{\Q}$ (from e.g.\ \cite[\S4]{BC09}) where $G_{\Q} = \mathrm{Gal}(\overline{\Q}/\Q)$. For a classical point $y \in V_{\Q}$, the tame level of $f_y$ depends only on  $[\rho_{V_\Q}|_{I_\ell}](y)$ for $\ell \neq p$, where $I_\ell$ is the inertia subgroup at $\ell$ (see \cite{SerreConductors}) defined using the embedding $\overline{\Q} \hookrightarrow \overline{\Q}_\ell$. Note $\rho_{V_{\Q}}|_{I_\ell}$ is trivial unless $\ell|N$. By considering the attached family of Weil--Deligne representations, one sees that the conductor of $\rho_{V_{\Q}}|_{I_{\ell}}$ is locally constant at $x$ (see \cite[Thm.\ 3.1]{SahaConductors}, noting that the Weil--Deligne representation attached to the classical cuspidal point $x$ is pure). Hence we may shrink so that the conductor at each $\ell|N$ -- and hence the tame level -- is constant over $V_{\Q}$. Then every classical point in $V_{\Q}$ satisfies Conditions \ref{running assumptions Q}.
	
	The Galois representation attached to $\mathrm{BC}_N(V_{\Q}) \subset \cE_{N\roi_K}$ is $\rho_{V_{\Q}}|_{G_K}$, where $G_K = \mathrm{Gal}(\overline{K}/K)$. The tame level of $z \in \mathrm{BC}_N(V_{\Q})$ similarly depends only on $[\rho_{V_{\Q}}|_{I_{K,\ell}}](z)$, where $I_{K,\ell} = I_\ell \cap G_K$. Thus the tame level is constant over $\mathrm{BC}_N(V_{\Q}) \subset \cE_{N\roi_K}$. Thus if $f_x$ base-changes to level $\n$, then so does $f_y$ for every nearby classical $y \in \CC$. Applying \cite[Thm.\ 3.2.1]{JoNew} we get the required map $V_{\Q} \to \E_{\n}$.
\end{proof}

\begin{remark}\label{rem:zd pstab}
	For clarity of argument, in the remainder of the paper, we will assume that if $x \in \CC$ satisfies Conditions \ref{running assumptions Q}, then there is a neighbourhood $V_{\Q}$ of $x$ in $\CC$ such that every classical point of $\mathrm{BC}_N(V_{\Q})\subset \E_{N\roi_K}$ satisfies Conditions \ref{running assumptions}. This is always the case, for example, if the tame level of $x$ is coprime to $d$. Since the proofs in the sequel are all local in nature, all of the results can be proved without this assumption by working in $\E_{\n}$ for some $\n|N\roi_K$ and using Prop.\ \ref{prop:zd pstab}.  We shall henceforth always drop the $N$ and $\n$ and just write $\mathrm{BC}$ and $\E$.
\end{remark}

\subsection{The dimensions of irreducible components} 

\begin{proposition}\label{prop:point}
Let $\f \in S_\lambda(U_1(\n))$ be a finite slope cuspidal Bianchi eigenform. There is a point $x_{\f} \in \E(L)$ corresponding to $\f$.
\end{proposition}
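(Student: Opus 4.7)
The plan is straightforward given the apparatus already assembled. By Theorem~\ref{t:hansen's eigencurve}, the $L$-points of $\E$ above $\lambda \in \W_K(L)$ are in bijection with finite-slope Hecke eigenclasses in $\h^*_{\mathrm{c}}(\Y, \DD_\lambda(L))$. Writing $\lambda = (k,k)$ for the weight of $\f$, and enlarging $L$ so that it contains the Hecke eigenvalues of $\f$, it suffices to produce a finite-slope eigenclass in some $\h^i_{\mathrm{c}}(\Y, \DD_\lambda(L))$ whose system of Hecke eigenvalues matches that of $\f$.

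The natural starting point is the classical cohomology class $\phi_\f \in \hc(\Y, \VV_\lambda(\C)^*)$ supplied by the embedding $S_\lambda(U_1(\n)) \hookrightarrow \hc(\Y, \VV_\lambda(\C)^*)$ recalled in \S\ref{sect:modsymb}. After possibly further enlarging $L$, $\phi_\f/\Omega_\f$ descends to a class in $\hc(\Y, \VV_\lambda(L)^*)$. The task then reduces to lifting this class through the specialisation map $\rho_\lambda$ to a finite-slope class in overconvergent cohomology.

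If $\f$ has small slope then Theorem~\ref{control theorem} hands over such a lift $\Psi_\f \in \hc(\Y, \DD_\lambda(L))$ in degree $i=1$ directly from Definition~\ref{def:non-critical}. For a general finite-slope $\f$, the lift in degree $1$ can fail (this is precisely the critical case), and handling it is the main obstacle. I would address this by taking the slope-$\leq h$ decomposition, for $h$ exceeding $\sum_{\pri|p} v_\pri(\alpha_\pri(\f))$, of the long exact sequence in compactly supported cohomology coming from
\[
0 \longrightarrow \ker(\rho_\lambda) \longrightarrow \Dla_\lambda(L) \longrightarrow V_\lambda(L)^* \longrightarrow 0
\]
as a short exact sequence of $\Sigma_0(p)$-modules, and applying the eigenvariety-style ``classicality in some degree'' argument of Ash--Stevens / Urban that underlies Hansen's construction. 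The upshot is that every finite-slope eigensystem in classical compactly supported cohomology is captured by $\h^i_{\mathrm{c}}(\Y, \DD_\lambda(L))$ for some $i$; as emphasised in the introduction, for Bianchi forms this degree is $i=2$ when the degree $1$ lift fails. Any such finite-slope lift then yields the point $x_\f \in \E(L)$ via Theorem~\ref{t:hansen's eigencurve}.
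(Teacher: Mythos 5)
Your overall structure is the same as the paper's: handle the non-critical case via Definition~\ref{def:non-critical}, and in the critical case pass to the long exact sequence for the specialisation map and argue the eigensystem appears in degree $2$. But the step you describe as ``applying the eigenvariety-style `classicality in some degree' argument of Ash--Stevens~/~Urban'' is where all the work lies, and as written it is a gap rather than a proof. The standard classicality results go in the other direction (finite-slope overconvergent classes are classical under slope conditions); what is needed here is that a given \emph{classical} finite-slope cuspidal eigensystem \emph{appears} in overconvergent cohomology, and that is precisely the content of the proposition, not something one can black-box.

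Concretely: from the slope-$\le h$ long exact sequence attached to $\Dla_\lambda \to V_\lambda^*$, surjectivity of $\rho_2 : \h^2_{\mathrm{c}}(\Y,\DD_\lambda(L))^{\le h} \to \h^2_{\mathrm{c}}(\Y,\VV_\lambda(L))^{\le h}$ on the generalised $\f$-eigenspace is not automatic; when $\f$ is critical the corresponding map $\rho_1$ in degree $1$ genuinely fails to be surjective, so one must say why degree $2$ is different. The paper's argument does this by identifying the cokernel of $\rho_2$ with a subspace of a degree-$3$ overconvergent cohomology group (via \cite[\S9.3]{BW16}) and then invoking the analysis of \cite[Lem.~3.9]{Bel12} to show cuspidal eigenpackets do not occur there; surjectivity of $\rho_2$ on the $\f$-eigenspace follows, and hence a nonzero eigenclass exists in $\h^2_{\mathrm{c}}(\Y,\DD_\lambda(L))$. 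Without this degree-$3$ vanishing input, your claim that ``every finite-slope eigensystem in classical compactly supported cohomology is captured by $\h^i_{\mathrm{c}}(\Y,\DD_\lambda(L))$ for some $i$'' is an assertion, not an argument. You should supply that vanishing explicitly rather than deferring to a general principle.
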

\begin{proof}
If $\f$ is non-critical, then there are eigenclasses $\Psi^i \in \h_{\mathrm{c}}^i(\Y, \DD_{\lambda}(L))$  for $i = 1,2$ corresponding to $\f$, and hence a point $x_{\f} \in \E(L)$. If $\f$ is critical, then consider the long exact sequence of cohomology attached to $\Dla_\lambda \rightarrow V_\lambda^*$. The cokernel of the map $\rho_2: \h^2_{\mathrm{c}}(\Y,\DD_\lambda(L)) \rightarrow \h^2_{\mathrm{c}}(\Y,\VV_\lambda(L)^*)$ can be identified as a subspace of a degree 3 overconvergent cohomology group (see \cite[\S9.3]{BW_CJM}); but an analysis as in \cite[Lem.\ 3.9]{Bel12} shows that cuspidal eigensystems do not appear in such spaces. In particular, after restricting to the generalised eigenspace at $\f$, the map $\rho_2$ is surjective. Thus $\h_{\mathrm{c}}^{*}(\Y,\DD_\lambda(L))_{(\f)} \neq 0$, and there exists a Hecke eigenclass with the same eigenvalues as $\f$ in this space, as required.
\end{proof}
For our purposes, if $\f$ is critical it suffices to assume $\f$ is base-change, whence such a point $x_{\f}$ arises in the image of $\mathrm{BC}$.
\begin{theorem}[Hida, Hansen--Newton]\label{thm:dimension 1}
Suppose $\f$ is non-critical. Any irreducible component $\cI$ of $\E$ passing through $x_{\f}$ has dimension 1.
\end{theorem}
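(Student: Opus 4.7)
The plan is to prove this by establishing the dimension of $\E'$ both from above and below.

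For the lower bound $\dim \E' \geq 1$, I would invoke the general dimension theorem for eigenvarieties due to Urban, refined in the setting here by Hansen in \cite{Han17}: any irreducible component of the cuspidal eigenvariety passing through a finite-slope classical point has dimension at least $\dim \W_K - l_0$, where $l_0$ is the defect of discrete series. For $\GL_2$ over an imaginary quadratic field one has $l_0 = 1$ and $\dim \W_K = 2$, giving $\dim \E' \geq 1$ immediately.

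For the upper bound $\dim \E' \leq 1$, I would exploit the non-criticality of $\f$ together with Harder's vanishing theorem. Non-criticality forces $\rho_\lambda$ to be an isomorphism on $\f$-generalised eigenspaces, so the multiplicity of the $\f$-eigensystem in overconvergent cohomology coincides with its classical multiplicity. Harder's theorem says classical cuspidal Bianchi cohomology $\h^i_{\mathrm{c}}(\Y,\VV_{\lambda'}(L)^*)$ vanishes at any non-parallel weight $\lambda' = (k_1',k_2')$ with $k_1'\neq k_2'$. Combined with the small-slope control theorem (Theorem \ref{control theorem}), any small-slope specialisation of a finite-slope overconvergent family to a non-parallel classical weight would be classical, hence zero. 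Working in a slope-bounded affinoid neighbourhood $\Omega \ni \lambda$ of weight space and using the finite-generation of $\h^*_{\mathrm{c}}(\Y,\DD_\Omega)^{\leq h}$ over $\roi(\Omega)$, I would then argue that the summand cut out by $\E'$ cannot specialise non-trivially at a Zariski-dense set of non-parallel classical weights in $\Omega$. Fredholm rigidity then forces $w(\E')$ to be contained in the parallel weight line locally at $\lambda$, so $\dim \E' \leq 1$.

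The main obstacle lies in the upper bound. The challenge is that, as emphasised in the introduction, classical points are \emph{not} Zariski-dense in $\E$, so one cannot directly test the generic behaviour of $\E'$ against classical non-parallel weights. Moreover, overconvergent cohomology can detect critical-slope eigensystems at non-parallel weights that do not come from classical cohomology, so Harder's vanishing alone is not enough; the argument really needs the Fredholm-theoretic interpolation of Hansen together with the Hansen--Newton refinement to upgrade pointwise vanishing on a dense set of weights to a structural constraint on $\E'$. This geometry of the parallel weight line will reappear centrally in \S\ref{sec:families of ms}, where it is used to transfer families between $\hc$ and $\h^2_{\mathrm{c}}$.
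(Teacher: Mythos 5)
Your lower bound is exactly the paper's: \cite[Prop.\ B.1]{Han17} gives $\dim \E' \geq 1$. But your upper bound has a genuine gap, and it is precisely the obstacle you flagged yourself.

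The slip is the phrase ``any small-slope specialisation \dots to a non-parallel classical weight would be classical, \emph{hence zero}.'' Harder's theorem kills only the \emph{cuspidal} part of classical cohomology at non-parallel weights; Eisenstein classes survive there. So a small-slope, finite-slope eigensystem at a non-parallel classical weight does specialise to something classical (by Theorem \ref{control theorem}), but it can be a perfectly non-trivial Eisenstein class rather than zero. Consequently nothing forces $w(\E')$ to land in the parallel-weight line: a hypothetical two-dimensional component $Z$ could well be supported on non-parallel weights through Eisenstein specialisations, and the ``Fredholm rigidity'' step you sketch never gets off the ground. Non-criticality of $\f$ does not save you either; it tells you about the generalised eigenspace at the single weight $\lambda = w(x_{\f})$, not about what happens at nearby non-parallel weights.

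The paper closes this gap by switching to Galois representations. It invokes the pseudocharacter $\rho_Z$ of \cite{JoNewExt} over a putative two-dimensional component $Z$, observes that at a Zariski-dense set of small-slope non-parallel points the classical specialisation must (by Harder) be Eisenstein and hence $\rho_Z$ specialises to a \emph{reducible} representation, and then uses that reducibility of a pseudocharacter is a Zariski-closed condition. This forces $\rho_Z$ itself, and in particular its specialisation at $x_{\f}$, to be reducible --- contradicting irreducibility of the Galois representation attached to the cuspidal form $\f$. The move from ``Eisenstein at a dense set of points'' to a contradiction genuinely requires this Galois-theoretic input; there is no purely cohomological version of the argument along the lines you propose, because the overconvergent module is not forced to vanish off the parallel line.
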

\begin{proof}
In the ordinary case, this is due to Hida \cite{Hid95}. In general, \cite[Prop.\ B.1]{Han17} shows that $\cI$ has dimension at least 1. The following was pointed out to us by Hansen. Suppose $\cI$ is a 2-dimensional irreducible component passing through $x_{\f}$, and let $\Omega = w(\cI)$. Let $\rho_{\cI}$ be the two-dimensional Galois pseudocharacter $\rho_{\cI}$ over $\cI$ of \cite{JoNewExt}. Let $\cI_{\mathrm{ss}}$ be the set of points $y$ of $\cI$ such that $w(y)$ is classical non-parallel in $\Omega$ and $y$ has small slope.
This set is Zariski-dense in $\cI$. Each $y \in \cI_{\mathrm{ss}}$ necessarily corresponds to a classical form by \cite[Thm.\ 8.7]{BW_CJM}, and this classical form must be Eisenstein, as classical cuspidal forms exist only at parallel weights. Hence the specialisation of $\rho_{\cI}$ at $y$ is reducible. Reducibility is a Zariski-closed condition, so $\rho_{\cI}$ and its specialisation at $x_{\f}$ are reducible. As $\f$ is cuspidal, its attached Galois representation is irreducible, so we get a contradiction.
\end{proof}

%
%

\section{Families of modular symbols}\label{sec:families of ms}
In \S3, we gave results about $p$-adic families using the total cohomology. The $p$-adic $L$-functions of  \cite{Wil17} arise \emph{only} from $\h_{\mathrm{c}}^1$, however, so in this section we refine the above to show that $p$-adic families can be realised in $\hc$ through modular symbols. Unlike in classical settings, this is obstructed by the contribution of a cuspidal Bianchi eigenform $\f$ to classical cohomology in degrees 1 \emph{and} 2. Counter-intuitively, a key step in overcoming this is the `purity' result of Lem.\ \ref{lem:min degree}, which implies that $\f$ appears in $\h_{\mathrm{c}}^i(Y_1(\n),\mathscr{D}_\Omega)$ -- for two-dimensional affinoids $\Omega$ -- only for $i=2$. This control allows us to isolate families of modular symbols (in $\hc$) over certain curves in $\Omega$ and prove that they are free of rank one over a Hecke algebra.

\subsection{Families in $\hc(\Y,\DD_\Sigma)$}
A \emph{slope-adapted affinoid} is a pair $(\Omega,h)$, where $\Omega = \mathrm{Sp}(\cO(\Omega))\subset \W_K$ is a two-dimensional affinoid in weight space and $h \in \Q_{\geq 0}$ such that there exists a $U_p$-slope decomposition (as in \cite[\S2.3.1]{Urb11}), stable under the action of $\HH_{\n,p}$ from Def.\ \ref{def:hecke alg}:
\[
\h_{\mathrm{c}}^*(\Y,\DD_{\Omega}) \cong \h_{\mathrm{c}}^*(\Y,\DD_{\Omega})\ssh \oplus \h_{\mathrm{c}}^*(\Y,\DD_{\Omega})^{>h}.
\]
\begin{definition}
For a slope-adapted affinoid $(\Omega,h)$, let
\[\TT_{\Omega,h} \defeq \text{Image of }\HH_{\n,p}  \otimes \cO(\Omega) \text{ in }\mathrm{End}_{\roi(\Omega)}(\h_{\mathrm{c}}^*(\Y,\DD_{\Omega})\ssh)\]
(using \emph{total} cohomology).  We define a local piece of the eigenvariety by
\[\E_{\Omega,h} \defeq \mathrm{Sp}(\TT_{\Omega,h}).\]
\end{definition}

 The affinoids $\E_{\Omega,h}$ glue to give $\E$. We have a bijection between eigensystems $\psi_x : \HH_{\n,p} \to L$ arising in $\h_{\mathrm{c}}^*(\Y,\DD_{\lambda}(L))\ssh$ and $x \in \E_{\Omega,h}(L)$ with $w(x) = \lambda \in \Omega(L)$. Attached to such $x$ is a maximal ideal $\m_x \subset \TT_{\Omega,h}$; pulling back gives a maximal ideal in $\HH_{\n,p}\otimes \cO(\Omega)$. This maximal ideal is generated by $\m_\lambda$ and $t - \psi_x(t)$ for all $t \in \HH_{\n,p}$, where $\m_\lambda$ is the maximal ideal of $\cO(\Omega)$ attached to $\lambda$. 
 	
 	The following is an unpublished result of David Hansen.

\begin{lemma}\label{lem:min degree}
	\begin{itemize}
		\item[(i)]
		The spaces $\h_{\mathrm{c}}^0(\Y,\DD_\lambda)\ssh$ and $\h_{\mathrm{c}}^0(\Y,\DD_\Omega)\ssh$ are both 0.
		\item[(ii)]
		Let $x$ be a cuspidal classical point of $\E_{\Omega,h}$. The system of eigenvalues for $x$ occurs in $\h^i_{\mathrm{c}}(\Y,\DD_{\Omega})\ssh$ if and only if $i=2$.
	\end{itemize}
\end{lemma}
\begin{proof}
	(i) Recall $\cA(R)$ from Def.\ \ref{def:A}. Let $\cA^0(R) \subset \cA(R)$ be the subspace of rigid analytic functions, and let $\Dz = \Dz(R) = \mathrm{Hom}_{\mathrm{cts}}(\rigidA(R),R)$. For $R = L$ or $\roi(\Omega)$, these spaces carry actions of $\Sigma_0(p)$ exactly as in Def.\ \ref{def:A} and \eqref{def:A family} respectively, and we thus get attached local systems $\DD^0$ on $Y_1(\n)$.
	
	 We first prove that $\h_{\mathrm{c}}^0(\Y,\DD^0) = 0$. For singular cohomology, we have $\h^0(\Y,\Dl) =  \oplus_{i \in \cl_K} \h^0(\Gamma_i\backslash \uhs, \Dl) = \oplus_{i \in \cl_K}\h^0(\Gamma_i,\Dl) = \oplus_{i \in \cl_K}(\Dla^0)^{\Gamma_i},$ where $\Gamma_i$ are as defined in \cite[Def.\ 3.2]{Wil17}. For $b \in \roi_K$, let $\gamma_b \defeq \smallmatrd{1}{b}{0}{1}$, which acts on $\rigidA$ by sending $f(z)$ to $f(z+b)$. For each $i$, there is an ideal $I_i \subset \roi_K$ such that $\{\gamma_b : b \in I_i\} \subset \Gamma_i$ (see \cite[Def.\ 3.2]{Wil17}). Fix $i \in \cl_K$ and $b \in I_i$, and let $\mu \in (\Dz)^{\Gamma_i}$; then $\mu(z\mapsto z) = \mu|\gamma_b(z \mapsto z) = \mu(z\mapsto z) + \mu(z \mapsto b),$ so that $\mu(z\to b) = 0$ and hence $\mu$ is zero on the constant functions. Suppose $\mu$ is zero on functions that are polynomial of degree less than $r-1$. Then consider any monomial $z \mapsto \theta_{j,r+1}(z) \defeq z^j\overline{z}^{r+1-j}$ of degree $r+1$. We have
	\begin{align*}
		\mu(\theta_{j,r+1}) = \mu|\gamma_b(\theta_{j,r+1}) &= \mu(z\mapsto (z+b)^{j}(\overline{z}+\overline{b})^{r+1-j})\\
		&= \mu(\theta_{j,r+1}) + bj\mu(\theta_{j-1,r}) + \overline{b}(r+1-j)\mu(\theta_{j,r})
	\end{align*}
	for all $b \in I_i$, where the lower terms vanish by assumption. Taking $0 \neq b \in \Z$ gives $j\mu(\theta_{j-1,r}) + (r+1-j)\mu(\theta_{j,r}) = 0$; and taking $0 \neq b \in \sqrt{-d}\Z$ gives $j\mu(\theta_{j-1,r}) - (r+1-j)\mu(\theta_{j,r}) = 0$. Solving, we conclude that $\mu(\theta_{j-1,r}) = \mu(\theta_{j,r}) = 0$, and since we can work with arbitrary $0 \leq j \leq r$, we conclude that $\mu$ vanishes on all monomials of degree $r$. Thus $\mu = 0$ by induction, so $(\Dla^0)^{\Gamma_i} = 0$, and hence $\h^0(\Y,\DD^0) = 0$. Then $\h^0_{\mathrm{c}}(\Y,\DD^0) = 0$ since the excision exact sequence for the Borel--Serre compactification of $\Y$ starts $0 \rightarrow \h_{\mathrm{c}}^0 \rightarrow \h^0$. After passing to the small slope parts, overconvergent cohomology with coefficients in $\Dla^0$ and $\Dla$ agree \cite[Lem.\ 2.3.13]{Urb11}, proving (i).
	
	(ii) We first claim that $x$ does not appear as an eigensystem in $\h_{\mathrm{c}}^3$, for which we follow \cite[Lem.\ 5.2]{PS12}. We identify $\h_{\mathrm{c}}^3(\Y,\DD_\Omega) \cong \h_0(\Y,\DD_\Omega)$ using Poincar\'e duality. This decomposes into a direct sum $\oplus_{i \in \cl_K} \Dla_\Omega/\Gamma_i\Dla_\Omega$, and an analysis as \emph{op.\ cit}.\ shows the only eigensystems supported on this module are attached to overconvergent weight $(0,0)$ Eisenstein series. They are thus not cuspidal, and $x$ does not appear in $\h_{\mathrm{c}}^3$. 
	
	It remains to show $x$ does not appear in $\hc(\Y,\DD_\Omega)$. We exploit Hansen's Tor spectral sequence
	\[
	E_2^{i,j} = \Tor_{-i}^{\roi(\Omega)}(\h^j_{\mathrm{c}}(\Y,\DD_\Omega)\ssh,L) \implies \h^{i+j}_{\mathrm{c}}(\Y,\DD_\lambda)\ssh,
	\]
	where $\m_\lambda$ is any maximal ideal of $\roi(\Omega)$ and $L = \roi(\Omega)/\m_\lambda$. Since $\roi(\Omega)$ is regular of dimension 2, the $\Tor_i^{\roi(\Omega)}$ groups vanish for $i \geq 3$, so that $E_2^{-3,2} = 0$. As $E_2^{1,0} = 0$ as well, we see that 
	\[
	E_3^{-1,1} = \ker(E_2^{-1,1} \rightarrow 0)/\mathrm{Image}(0 \rightarrow E_2^{-1,1}) = E_2^{-1,1},
	\]
	and continuing, that $E_\infty^{-1,1} = \Tor_1^{\roi(\Omega)}(\hc(\Y,\DD_\Omega)\ssh,L)$. This contributes to the grading on $\h_{\mathrm{c}}^0(\Y,\DD_\lambda)\ssh$, which is zero by the above; hence this Tor term vanishes. A similar analysis, using that $E_2^{0,0} = E_2^{-4,2} = 0$, shows that 
	\[
	E_\infty^{-2,1} = E_2^{-2,1} = \Tor_2^{\roi(\Omega)}(\hc(\Y,\DD_\Omega)\ssh,L) = 0
	\]
	 as well. Then $\Tor_i^{\roi(\Omega)}(\hc(\Y,\DD_\Omega)\ssh,\roi(\Omega)/\m_\lambda) = 0$ for all $i > 0$, and for any maximal ideal $\m_\lambda$, so by \cite[Prop.\ A.3]{Han17}, the $\roi(\Omega)$-module $\hc(\Y,\DD_\Omega)\ssh$ is either zero or projective. As it is torsion by \cite[Thm.\ 4.4.1]{Han17}, it cannot be projective, so it vanishes, as required.
\end{proof}

Despite this purity in degree 2, we now show how to exhibit families of Bianchi modular forms in $\hc$, where our constructions of $p$-adic $L$-functions take place.

\begin{definition}
\label{def:family notation}
Let $x \in \E_{\Omega,h}(L)$ be any point corresponding to a maximal ideal $\m_x$ in $\TT_{\Omega,h}$. Let $\mathscr{P}_x$ be a minimal prime of $\TT_{\Omega,h}$ contained in $\m_x$, and write $\mathscr{P}_\lambda$ for the contraction of $\mathscr{P}_x$ to $\roi(\Omega)$. Define $\Lambda = \roi(\Omega)/\mathscr{P}_\lambda$ and let $\Sigma = \mathrm{Sp}(\Lambda)$ be the corresponding closed subset inside $\Omega$, which is a rigid curve by Thm.\ \ref{thm:dimension 1}. If such a curve $\Sigma \subset \Omega$ arises in this way, we say that \emph{$x$ varies in a family over $\Sigma$}.
\end{definition}

\begin{proposition}\label{prop:appear in H1}
Let $x$ be a cuspidal classical point of $\E_{\Omega,h}(L)$ that varies in a family over $\Sigma$. Then, after possibly shrinking $\Sigma$,
\[\hc(\Y,\DD_{\Sigma})\ssh\locx \neq 0.\]
\end{proposition}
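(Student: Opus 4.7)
The plan is a Koszul-style argument. After shrinking $\Omega$ to a small affinoid neighbourhood of $\lambda = w(x)$, we may assume $\Omega$ is smooth and two-dimensional, and that the family curve $\Sigma \subset \Omega$ is principal, so that $\mathscr{P}_\lambda = (f)$ for some $f \in \roi(\Omega)$. Since $\DD_\Omega$ is flat over $\roi(\Omega)$, tensoring the short exact sequence $0 \to \roi(\Omega) \xrightarrow{f} \roi(\Omega) \to \roi(\Sigma) \to 0$ with $\DD_\Omega$ yields
\[
0 \longrightarrow \DD_\Omega \xrightarrow{\cdot f} \DD_\Omega \longrightarrow \DD_\Sigma \longrightarrow 0.
\]
Taking the long exact sequence in $\h^*_c(\Y,-)$, passing to the slope-$\leq h$ part, and localising at $\m_x$ (both exact operations) gives
\[
\h^1_c(\Y,\DD_\Omega)\ssh\locx \xrightarrow{\cdot f} \h^1_c(\Y,\DD_\Omega)\ssh\locx \to \hc(\Y,\DD_\Sigma)\ssh\locx \to \h^2_c(\Y,\DD_\Omega)\ssh\locx \xrightarrow{\cdot f} \h^2_c(\Y,\DD_\Omega)\ssh\locx.
\]

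By Lemma \ref{lem:min degree}, the cuspidal eigenpacket at $x$ lives in ``minimum degree'' $2$ over the two-dimensional $\Omega$, so $\h^1_c(\Y,\DD_\Omega)\ssh\locx = 0$; the exact sequence then identifies
\[
\hc(\Y,\DD_\Sigma)\ssh\locx \;\cong\; M[f], \qquad M := \h^2_c(\Y,\DD_\Omega)\ssh\locx,
\]
and the proposition reduces to proving $M[f] \neq 0$.

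For this step I would argue via the support. First, $M \neq 0$: since $x$ is a cuspidal classical point of $\E_{\Omega,h}$, Lemma \ref{lem:min degree} forces its eigensystem to appear non-trivially in degree $2$, so the minimal prime $\mathscr{P}_x \subset \TT_{\Omega,h}$ attached to the chosen family satisfies $M_{\mathscr{P}_x} \neq 0$. Writing $R = \roi(\Omega)\locl$, the support of $M$ as an $R$-module is the image of $\mathrm{Supp}_{\TT\locx}(M) \subset \mathrm{Spec}(\TT\locx)$ under the weight map, and this image has dimension at most $\dim \E_{\Omega,h} = 1$ by Theorem \ref{thm:dimension 1}. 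Since $\dim R = 2$, we deduce $\mathrm{Ann}_R(M) \neq 0$, i.e.\ the generic point $(0)$ is not in $\mathrm{Supp}_R(M)$. On the other hand, the contraction of $\mathscr{P}_x$ to $R$ is $\mathscr{P}_\lambda = (f)$, and since $M_{\mathscr{P}_x}$ is a further localisation of $M_{\mathscr{P}_\lambda}$, we have $M_{\mathscr{P}_\lambda} \neq 0$. Thus $\mathscr{P}_\lambda \in \mathrm{Supp}_R(M)$ and, as a height-$1$ prime with only $(0)$ strictly beneath it, is a minimal prime of $\mathrm{Supp}_R(M)$; minimal primes of the support are associated primes, so every element of $\mathscr{P}_\lambda$ -- in particular $f$ -- is a zero-divisor on $M$, giving $M[f] \neq 0$ as required.

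The main obstacle is precisely this support computation: one must exploit the dimension mismatch between $\E_{\Omega,h}$ and $\Omega$ to force $\mathscr{P}_\lambda$ to be minimal in $\mathrm{Supp}_R(M)$, and this uses Lemma \ref{lem:min degree} in a crucial way to guarantee $M_{\mathscr{P}_x} \neq 0$ in the first place -- without the purity statement the family might live only in $\h^1_c$ and the entire strategy would collapse. The auxiliary inputs (flatness of $\DD_\Omega$ over $\roi(\Omega)$, and principality of $\mathscr{P}_\lambda$ after shrinking, which follows from smoothness of $\Omega$ at $\lambda$) are standard and underlie the allowance to shrink $\Sigma$ in the statement.
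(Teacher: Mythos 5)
Your proof is correct and follows essentially the same route as the paper's direct argument. Both start from the short exact sequence $0 \to \DD_\Omega \xrightarrow{\cdot f} \DD_\Omega \to \DD_\Sigma \to 0$ (after shrinking $\Omega$ and replacing $\mathscr{P}_\lambda$ by a principal generator), take the long exact sequence, localise at $\m_x$, kill the $\h^1_c(\Y,\DD_\Omega)\ssh$ term using Lemma \ref{lem:min degree}, and reduce to showing that $M[f]\neq 0$, where $M = \h^2_c(\Y,\DD_\Omega)\ssh\locx$. And both close the argument by exhibiting $\mathscr{P}_\lambda$ as an associated prime of $M$ and citing the standard Matsumura fact that minimal primes of the support are associated. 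The only cosmetic difference is in how that final step is phrased: the paper invokes the bijection between minimal primes of $\TT_{\Omega,h}$ and associated primes of $\h^2_c$ directly, while you reach the same conclusion via a dimension count, observing that $\mathrm{Supp}_R(M)$ has dimension at most one and that $\mathscr{P}_\lambda$ must therefore be minimal in it. (Incidentally, you do not actually need Theorem \ref{thm:dimension 1} for the dimension bound: the torsion-ness of $\h^2_c(\Y,\DD_\Omega)\ssh$ over $\roi(\Omega)$ --- cited in the proof of Lemma \ref{lem:min degree} --- already suffices, since $R$ is a regular local domain of dimension $2$.)

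One step is stated slightly too quickly: passing from ``the eigensystem of $x$ appears in degree $2$'' to ``$M_{\mathscr{P}_x} \neq 0$.'' The lemma gives $\m_x \in \mathrm{Supp}(M)$, i.e.\ $M \neq 0$, but $\mathscr{P}_x \in \mathrm{Supp}(M)$ is a priori stronger. To fill this in one should note that $\TT_{\Omega,h}\locx$ acts faithfully on $\h^*_c(\Y,\DD_\Omega)\ssh\locx$, and that Lemma \ref{lem:min degree} together with the cuspidality of $x$ (ruling out degree $3$) forces this localised total cohomology to be concentrated in degree $2$; faithfulness then gives $\mathrm{Supp}_{\TT\locx}(M) = \mathrm{Spec}(\TT\locx)$, in particular $\mathscr{P}_x \in \mathrm{Supp}(M)$. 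The paper elides this at roughly the same level of detail, so this is a shared looseness rather than a defect in your approach specifically.
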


\begin{proof}
Given Lem.\ \ref{lem:min degree}, the result now follows from \cite[Lem.\ B.3]{Han17} combined with the final remark of Appendix B \emph{op.\ cit}. We sketch a direct proof. The ideal $\mathscr{P}_\lambda$ has height one, and still has height one in the localisation $\roi(\Omega)\locl$. This localisation is a regular local ring, and hence a unique factorisation domain, so all height one primes are principal, and we can take some generator $r$ of $\mathscr{P}_\lambda\roi(\Omega)\locl$. After possibly shrinking $\Omega$, and scaling by a unit in $\roi(\Omega)\locl$, we may assume $r \in \roi(\Omega)$. We obtain a short exact sequence $0 \rightarrow \Dla_\Omega \rightarrow \Dla_\Omega \rightarrow \Dla_{\Sigma} \rightarrow 0$, where the first map is multiplication by $r$. By truncating the associated long exact sequence at the first degree 2 term, and localising at $x$, we obtain a short exact sequence
\[
\hc(\Y,\DD_\Omega)\ssh\locx \rightarrow \hc(\Y,\DD_{\Sigma})\ssh\locx \rightarrow \h_{\mathrm{c}}^2(\Y,\DD_\Omega)\ssh\locx[r] \rightarrow 0.
\]
By Lem.\ \ref{lem:min degree}, the first term vanishes, so the second map is an isomorphism. Minimal primes of $\TT_{\Omega,h}$ are in bijection with associated primes in $\h_{\mathrm{c}}^2(Y_1(\n), \mathscr{D}_\Omega)^{\leq h}$ by \cite[Thm.\ 6.5]{Mat89} and Lem.\ \ref{lem:min degree}, so the system of eigenvalues corresponding to $x$ is $\mathscr{P}_x$-torsion. Thus $\mathscr{P}_\lambda$ is an associated prime of $\h_{\mathrm{c}}^2(Y_1(\n),\mathscr{D}_\Omega)\ssh_{\m_x}$, i.e.\ it annihilates a non-zero element. Thus $\h_{\mathrm{c}}^2(Y_1(\n),\mathscr{D}_\Omega)^{\leq h}_{\m_x}[r]$ is non-zero, from which we conclude.
\end{proof}

We were unable to find a proof of this proposition that used only the short exact sequence $0 \to \Dla_\Sigma \to \Dla_\Sigma \to \Dla_\lambda \to 0$, due to the presence of classes in both degree 1 and 2; the additional input from $\Dla_\Omega$, via the `purity' of Lem.\ \ref{lem:min degree}, appears to be necessary to obtain sufficient control.

Recall $\m_x \subset \HH_{\n,p}\otimes \cO(\Omega)$; we have a maximal ideal $\m_x \otimes_{\cO(\Omega)} \Lambda \subset \HH_{\n,p} \otimes \Lambda$. Let
\[\TT_{\Sigma,h} \defeq \text{image of }\HH_{\n,p}\otimes \Lambda \text{ in End}_\Lambda(\hc(\Y,\DD_\Sigma)\ssh).\]
By Prop.\ \ref{prop:appear in H1}, the image of $\m_x \otimes \Lambda \subset \HH_{\n,p}\otimes \Lambda$ in $\TT_{\Sigma,h}$ is maximal; abusing notation, we also denote this by $\m_x$. Thus $\m_x$ corresponds to a point $x \in \Sp(\TT_{\Sigma,h})$.

\subsection{A structure theorem}
Let $x \in \E_{\Omega,h}(L)$ correspond to a cuspidal non-critical classical Bianchi eigenform $\f$, varying in a family over a curve $\Sigma \subset \Omega$. Let $\lambda = w(x)$. By Thm.\ \ref{thm:mult one}, the following holds if $\f$  satisfies Conditions \ref{running assumptions}:
\begin{equation}\label{eqn:mult one}\text{ The $\HH_{\n,p}$-generalised eigenspace $\hc(\Y,\VV_\lambda(L)^*)_{(\f)}$ is one-dimensional.}
\end{equation}

\begin{theorem}\label{thm:free rank 1}
Let $\f$ be non-critical satisfying (\ref{eqn:mult one}). Suppose $\Sigma$ is smooth at $\lambda$. Then the space $\hc(\Y,\DD_\Sigma)\ssh\locx$ is free of rank 1 over $(\TT_{\Sigma,h})\locx$, which (after replacing $\Lambda$ with $\Lambda\otimes_{\Qp}L$) is itself free of rank 1 over $\Lambda_{\m_{\lambda}}$.
\end{theorem}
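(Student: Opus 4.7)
The plan is to exploit smoothness of $\Sigma$ at $\lambda$ to set up a short exact sequence that relates $M \defeq \hc(\Y,\DD_\Sigma)\ssh\locx$ to its fibre at $\lambda$, to control that fibre via non-criticality and (\ref{eqn:mult one}), and then to conclude using the structure theory of finitely generated modules over the DVR $\Lambda_{\m_\lambda}$. The second rank-one statement then follows formally from faithfulness of the Hecke action.

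First, smoothness of $\Sigma$ at $\lambda$ implies that $\Lambda_{\m_\lambda}$ is a DVR. Shrinking $\Sigma$ if necessary, choose $r \in \Lambda$ generating $\m_\lambda \Lambda_{\m_\lambda}$, giving a short exact sequence of $\Sigma_0(p)$-modules
\[
	0 \longrightarrow \Dla_\Sigma \stackrel{\cdot r}{\longrightarrow} \Dla_\Sigma \longrightarrow \Dla_\lambda \longrightarrow 0.
\]
I would then take compactly supported cohomology, pass to slope-$\leq h$ parts, and localise at $x$. Using the vanishing of $\h^0_c$ in all three relevant forms (Lemma \ref{lem:min degree}(i), whose proof applies verbatim to $\DD_\Sigma$), the associated long exact sequence truncates to
\[
	0 \longrightarrow M \stackrel{\cdot r}{\longrightarrow} M \longrightarrow \hc(\Y,\DD_\lambda)\ssh\locx \longrightarrow \h^2_c(\Y,\DD_\Sigma)\ssh\locx[r] \longrightarrow 0.
\]
In particular $M$ is $r$-torsion-free, and $M/rM$ injects into the fibre $\hc(\Y,\DD_\lambda)\ssh\locx$.

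By non-criticality (Def.\ \ref{def:non-critical}), specialisation identifies the generalised $\f$-eigenspaces in overconvergent and classical cohomology; by (\ref{eqn:mult one}) the classical one is 1-dimensional over $L$. Hence $\dim_L M/rM \leq 1$. Since $M \neq 0$ by Prop.\ \ref{prop:appear in H1}, Nakayama over the local ring $\Lambda_{\m_\lambda}$ forces $\dim_L M/rM = 1$. Combined with $r$-torsion-freeness, the structure theorem over the DVR $\Lambda_{\m_\lambda}$ yields $M \cong \Lambda_{\m_\lambda}$. For the Hecke-algebra statement: $\TT_{\Sigma,h}$ acts faithfully on $\hc(\Y,\DD_\Sigma)\ssh$ by definition, and since annihilators of finitely generated modules commute with localisation, $\TT_{\Sigma,h}\locx$ acts faithfully on $M$. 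Tensoring with $L$ as in the statement, the action gives an injection $\TT_{\Sigma,h}\locx \hookrightarrow \mathrm{End}_{\Lambda_{\m_\lambda}}(M) = \Lambda_{\m_\lambda}$, which together with the structure map forces $\TT_{\Sigma,h}\locx \cong \Lambda_{\m_\lambda}$; so $M$ is free of rank 1 over $\TT_{\Sigma,h}\locx$ as required.

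The main delicate step is the identification, in the use of non-criticality, of the localisation of $\hc(\Y,\DD_\lambda)\ssh$ at the maximal ideal $\m_x$ of the family Hecke algebra $\TT_{\Sigma,h}$ with the classical generalised $\f$-eigenspace; this requires checking that no other small-slope eigenpacket specialises to $\f$'s at $\lambda$, which follows from the separation of Hecke eigensystems on $\E_{\Sigma,h}$ and the non-critical isomorphism $\rho_\lambda$ at $\f$. All remaining ingredients are formal: the key inputs are smoothness (for the DVR property), Lemma \ref{lem:min degree} (for vanishing of $\h^0_c$), Prop.\ \ref{prop:appear in H1} (for non-vanishing of $M$), and (\ref{eqn:mult one}) plus non-criticality (for the 1-dimensional fibre).
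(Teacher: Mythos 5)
Your proof is correct and follows the same overall strategy as the paper: use the multiplication-by-$r$ short exact sequence on distributions (where $r$ generates $\m_\lambda\Lambda_{\m_\lambda}$, which is principal by smoothness), take cohomology, localise at $\m_x$, control the fibre $M/rM \hookrightarrow \hc(\Y,\DD_\lambda)\ssh_{\m_x}$ via non-criticality and (\ref{eqn:mult one}), invoke Prop.\ \ref{prop:appear in H1} for non-vanishing, apply Nakayama, and finish the Hecke-algebra statement by faithfulness of the $\TT_{\Sigma,h}$-action.

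The one genuinely different ingredient is your route to torsion-freeness of $M$ over $\Lambda_{\m_\lambda}$. You extract it from the \emph{same} long exact sequence, observing that $\ker(\cdot r : M \to M)$ is the image of $\h^0_c(\Y,\DD_\lambda)\ssh_{\m_x}$, which vanishes by Lemma \ref{lem:min degree}(i); combined with $\dim_L M/rM = 1$, the DVR structure theorem gives $M \cong \Lambda_{\m_\lambda}$. The paper instead proves torsion-freeness in a separate Lemma \ref{lem:torsion free}, using the modular-symbol description to embed $\hc(\Y,\DD_\Sigma)$ into a finite direct sum of copies of $\Dla_\Sigma$ (torsion-free since $\Lambda$ is a domain), and then combines this with Prop.\ \ref{prop:gen by 1} (``generated by one element,'' proved with two applications of Nakayama). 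Your version is arguably tighter, since it requires no extra lemma and needs only the vanishing $\h^0_c(\Y,\DD_\lambda)\ssh = 0$, which is already part of Lemma \ref{lem:min degree}(i) — note that is actually the only vanishing you need, since the injectivity of $\cdot r$ only uses the $\h^0_c(\Y,\DD_\lambda)$ term of the connecting sequence, not $\h^0_c(\Y,\DD_\Sigma)$; your parenthetical about extending Lemma \ref{lem:min degree}(i) to $\DD_\Sigma$ is therefore unnecessary (though harmless). The paper's Lemma \ref{lem:torsion free} is still needed elsewhere (e.g.\ Prop.\ \ref{prop:parallel weight}, Prop.\ \ref{prop:critical free neighbourhood}), so in the paper it is not redundant, but for this theorem alone your route dispenses with it. Your ``delicate step'' remark at the end is somewhat overcautious: the identification of $\hc(\Y,\DD_\lambda)\ssh_{\m_x}$ with the generalised $\f$-eigenspace is forced by localising the Hecke-equivariant long exact sequence at (the contraction of) $\m_x$, and no separate argument about eigensystem separation is needed beyond what the paper uses.
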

Note that smoothness of $\Sigma$ is satisfied in base-change components. We give the proof after two lemmas, and thank Adel Betina, who contributed to Lem.\ \ref{prop:gen by 1}.
\begin{lemma}\label{prop:gen by 1}
The $\Lambda_{\m_{\lambda}}$-module $\hc(\Y,\DD_\Sigma)\ssh\locx$ is generated by one element.
\end{lemma}

\begin{proof}
The localisation $\hc(\Y,\DD_\Sigma)\ssh\locx$ is a finite $\Lambda\locl$-module by general facts on slope decompositions.  Thus we may freely use Nakayama's lemma.

From the short exact sequence of distribution spaces given by the natural surjection $\mathrm{sp}_{\lambda}: \Dla_\Sigma \rightarrow \Dla_\lambda$, we obtain a long exact sequence of cohomology, which (since $\m_\lambda$ is principal by smoothness) we truncate to an exact sequence
\[
0 \rightarrow \hc(\Y,\DD_\Sigma)\ssh \otimes_\Lambda \Lambda/\m_\lambda \rightarrow \hc(\Y,\DD_\lambda)\ssh.
\]
Since localising is exact, we deduce the existence of an exact sequence
\begin{equation}\label{eq:injection}
0 \rightarrow \hc(\Y,\DD_\Sigma)\locx\ssh \otimes_{\Lambda\locl} \Lambda\locl/\m_\lambda \rightarrow \hc(\Y,\DD_\lambda)\locx\ssh.
\end{equation}
The last term is the generalised eigenspace corresponding to the system of eigenvalues attached to $x$. (At this point, we are assuming that we have extended the base field of $\Lambda$ so that $x$ is defined over $\Lambda/\m_\lambda$). As $x$ is non-critical, this is isomorphic to the generalised eigenspace of $\f$ for $\bH_{\n,p}$ in the classical cohomology, and 
by assumption \eqref{eqn:mult one}, this is one-dimensional. Suppose the first term is 0; then by Nakayama's lemma, we must have $\hc(\Y,\DD_\Sigma)\ssh\locx = 0,$ which contradicts Prop.\ \ref{prop:appear in H1}. Hence the first term is one-dimensional and there is an isomorphism
 \[
  \hc(\Y,\DD_\Sigma)\ssh\locx\otimes_{\Lambda\locl}\Lambda\locl/\m_\lambda \cong \hc(\Y,\DD_\lambda)\ssh\locx.
 \]
Now we use Nakayama again. A generator of $\hc(\Y,\DD_\Sigma)\ssh\locx \otimes_{\Lambda\locl}\Lambda\locl/\m_\lambda$ lifts to a generator of $\hc(\Y,\DD_\Sigma)\ssh\locx$ over $\Lambda\locl$, which completes the proof.
\end{proof}

\begin{lemma}\label{lem:torsion free}
The $\Lambda$-module $\hc(\Y,\DD_\Sigma)\ssh$ is projective.
\end{lemma}
\begin{proof}
We use the identification with modular symbols. For fixed $i$, let $\{\delta_j : j\in J\}$ be a finite set of generators for $\Delta_0$ as a $\Z[\Gamma_i]$-module \cite[Lem.\ 3.8]{Wil17}. For any $R$, the map $\symb_{\Gamma_i}(\Dla(R)) \hookrightarrow \Dla(R)^J, \Psi \mapsto (\Psi(\delta_j))_{j\in J}$ is an injective $R$-module map. By passing to the direct sum over all $i \in \cl_K$, we obtain a $\Lambda$-module embedding of $\hc(\Y,\DD_\Sigma)$ into a finite direct sum of copies of $\Dla_\Sigma$. But $\Dla_\Sigma$ is a torsion-free $\Lambda$-module since $\Lambda$ is a domain. Hence $\hc(\Y,\DD_\Sigma)\ssh$ is finite torsion-free over $\Lambda$. Thus $\mathrm{Tor}_i^\Lambda(\hc(\Y,\DD_\Sigma)\ssh,\Lambda/\m_\lambda) = 0$ for all $i > 0$ and $\lambda \in \Sigma$; for $i = 1$, this is by torsion-freeness, and for $i \geq 2$, this follows by smoothness of $\Sigma$ (so $\Lambda$ is regular of dimension 1) and \cite[Thm.\ 19.2]{Mat89}. We conclude by \cite[Prop.\ A.3]{Han17}.
\end{proof}

\begin{proof} \textbf{(Thm.\ \ref{thm:free rank 1})}. By Lems.\ \ref{prop:gen by 1} and \ref{lem:torsion free}, $\hc(\Y,\DD_\Sigma)\ssh\locx$ is free of rank 1 over $\Lambda\locl$. As
 \[
   (\TT_{\Sigma,h})\locx \subset \mathrm{End}_{\Lambda\locl}\left(\hc(\Y,\DD_\Sigma)\ssh\locx\right) \cong \Lambda\locl
 \]
is non-zero by our assumption on $\Sigma$, we must have $(\TT_{\Sigma,h})\locx \cong \Lambda\locl$. As the actions of $\TT_{\Sigma,h}$ and $\Lambda$ on $\hc(\Y,\DD_\Sigma)\ssh$ are compatible, we see $\hc(\Y,\DD_\Sigma)\ssh\locx$ is free of rank 1 over $(\TT_{\Sigma,h})\locx$, completing the proof of Thm.\ \ref{thm:free rank 1}.
\end{proof}


\begin{corollary}\label{prop:free neighbourhood}
Possibly shrinking $\Sigma$, there exists a connected component $V = \Sp T\subset \Sp(\TT_{\Sigma,h})$ containing $x$ such that $\hc(\Y,\DD_\Sigma)\ssh\otimes_{\TT_{\Sigma,h}}T$ is free of rank one over $T$, which is free of rank one over $\Lambda$. Thus the weight map $V \to \Sigma$ is \'etale.
\end{corollary}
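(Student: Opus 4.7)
The plan is to deduce the corollary by spreading out the stalk-level conclusions of Theorem \ref{thm:free rank 1} to an admissible open neighborhood, using the finiteness of the weight map on the eigenvariety together with standard coherent-sheaf arguments.

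First I would isolate the connected component. Since $\TT_{\Sigma,h}$ is finite over $\Lambda$ by the construction of slope-adapted eigenvarieties, the morphism $w : \Sp(\TT_{\Sigma,h}) \to \Sigma$ has finite fibres. In particular, only finitely many maximal ideals of $\TT_{\Sigma,h}$ sit above $\m_\lambda$. After shrinking $\Sigma$ to a small enough affinoid neighborhood of $\lambda$ (which remains slope-adapted for $h$), the idempotents separating $x$ from the other points over $\lambda$ can be lifted, yielding a direct factor decomposition $\TT_{\Sigma,h} = T \times T'$ in which $V = \Sp T$ is the connected component of $\Sp(\TT_{\Sigma,h})$ containing $x$, and in which $\m_x$ is the unique maximal ideal of $T$ above $\m_\lambda$. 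Localization at $\m_x$ on $T$ then agrees (after further shrinking) with the stalk computed in Theorem \ref{thm:free rank 1}.

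Next I would promote the two rank-one statements from $(\TT_{\Sigma,h})\locx$ and $\Lambda_{\m_\lambda}$ to $T$ and $\Lambda$ respectively. Theorem \ref{thm:free rank 1} gives $T\locx \cong \Lambda_{\m_\lambda}$. Since $T$ is a finite $\Lambda$-algebra and the map $\Lambda \to T$ becomes an isomorphism after localizing at $\m_\lambda$, Nakayama applied to a finite set of $\Lambda$-generators shows that $T/\Lambda$ is annihilated by an element of $\Lambda \setminus \m_\lambda$. Shrinking $\Sigma$ to remove the zero locus of this element makes $\Lambda \to T$ a surjection of finite $\Lambda$-algebras that is an isomorphism at the stalk over $\lambda$; as $T$ and $\Lambda$ are both reduced (on sufficiently small connected components) and of the same generic rank over $\Sigma$, it follows that $\Lambda \to T$ is an isomorphism after shrinking further. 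In particular the weight map $w|_V : V \to \Sigma$ is an isomorphism and hence étale.

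Now set $M \defeq \hc(\Y,\DD_\Sigma)\ssh \otimes_{\TT_{\Sigma,h}} T$. This is a coherent $T$-module because the slope-$\leq h$ part of compactly supported cohomology is finitely generated over $\TT_{\Sigma,h}$, and the stalk $M\locx$ recovers $\hc(\Y,\DD_\Sigma)\ssh\locx$, which is free of rank one over $T\locx$ by Theorem \ref{thm:free rank 1}. A generator at $\m_x$ can be lifted to a global section of $M$ over $V$, and the cokernel and kernel of the corresponding map $T \to M$ are coherent sheaves whose stalks at $\m_x$ vanish; shrinking $V$ (equivalently, shrinking $\Sigma$) kills both, giving $M \cong T$ as $T$-modules. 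Combined with $T \cong \Lambda$, this yields the étaleness of $w$ on $V$.

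The main obstacle is the bookkeeping of the successive shrinkings of $\Sigma$: each step (isolating the component, upgrading $T\locx \cong \Lambda_{\m_\lambda}$ to $T \cong \Lambda$, and spreading freeness of $M$) requires replacing $\Sigma$ by a smaller admissible affinoid open around $\lambda$, and one must check that each shrinking preserves the slope-adapted property so that the groups $\hc(\Y,\DD_\Sigma)\ssh$ and the Hecke algebras $\TT_{\Sigma,h}$ behave functorially. This is ultimately routine given that slope decompositions are compatible with affinoid base change in weight, but requires care.
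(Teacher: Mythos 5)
Your approach is conceptually the same as the paper's: take the stalk-level conclusions of Theorem~\ref{thm:free rank 1} and spread them out to an admissible affinoid neighbourhood. The difference is one of packaging: the paper's proof identifies the three algebraic localisations as colimits over affinoid subdomains and then invokes \cite[Lem.\ 2.13]{BDJ17} twice as a black box to perform the spreading out, whereas you carry out the commutative algebra by hand (idempotents to isolate the component, Nakayama to kill kernel and cokernel, lifting a generator to promote stalk-freeness to local freeness). Both routes work, and yours is a reasonable substitute for someone who does not want to chase down the BDJ17 lemma. The paper's version is slightly tighter because the cited lemma already handles the rigid-analytic subtleties (shrinking to an affinoid subdomain where a function is a unit is a Laurent-domain construction, not a Zariski open, and the local ring of a rigid space is \emph{a priori} larger than the algebraic localisation); you gloss over this, though as you note it is routine.

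One small overclaim worth flagging: in the step promoting $T\locx \cong \Lambda\locl$ to $T \cong \Lambda$, you appeal to reducedness of $T$ and a comparison of generic ranks. Neither is needed, and the reducedness of $T$ at this stage is not something you have established (it is proved later in the paper, in Prop.\ \ref{prop:reduced}, \emph{as a consequence} of this very corollary — so invoking it here would risk circularity). The correct argument is the one you start but then abandon: once the cokernel of $\Lambda\to T$ is killed by shrinking (Nakayama plus inverting an element outside $\m_\lambda$), $\Lambda\to T$ is a surjection of Noetherian rings whose kernel is a finitely generated ideal vanishing at $\m_\lambda$; another application of Nakayama kills the kernel after shrinking again. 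No reducedness or rank comparison enters. I would delete the reducedness clause to avoid the appearance of circular reasoning.
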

\begin{proof}
(Compare \cite[Lem.\ 2.10]{BDJ17}). Define \emph{rigid analytic localisations} by
\[
\Lambda_\lambda = \varinjlim_{\lambda \in U\subset\Sigma}\roi(U),\hspace{20pt}
(\TT_{\Sigma,h})_x = \varinjlim_{x \in V \subset \Sp(\TT_{\Sigma,h})}\roi(V),
\]
\[
\hc(\Y,\DD_\Sigma)\ssh_x = \textstyle\varinjlim_{x \in V \subset \Sp(\TT_{\Sigma,h})} \hc(\Y,\DD_\Sigma)\ssh\otimes_{\TT_{\Sigma,h}}\roi(V).
\]
By \cite[\S7.3.2,\S7.3.3]{BGR}, $\Lambda_\lambda$ and $(\TT_{\Sigma,h})_x$ are faithfully flat extensions of $\Lambda_{\m_\lambda}$ and $(\TT_{\Sigma,h})_{\m_x}$ respectively, with isomorphic completions; and combining \cite[\S7.3.3, Prop.\ 4]{BGR} with Thm.\ \ref{thm:free rank 1}, we see $\hc(\Y,\DD_\Sigma)\ssh_x$ is free of rank one over $(\TT_{\Sigma,h})_x$, which is free of rank one over $\Lambda_\lambda$. Thus we are in the situation of \cite[Lem.\ 2.10]{BDJ17} (over the rigid space $\Sigma$), so that -- possibly shrinking $\Sigma$ -- we may choose $V \subset \Sp(\TT_{\Sigma,h})$ such that $T = \roi(V)$ is free of rank one over $\Lambda = \roi(\Sigma)$. A second application of the same lemma to the second and third equations, over the rigid space $\Sp(\TT_{\Sigma,h})$, now shows that, after potentially shrinking $\Sigma$ and $V$ again, we have $\hc(\Y,\DD_{\Sigma})\ssh\otimes_{\TT_{\Sigma,h}}T$ free of rank one over $T$, as required.
\end{proof}

%
%
\section{The parallel weight eigenvariety}\label{sec:parallel weight ev}
We describe a `parallel weight eigenvariety' $\Epar \subset \E$, using $\hc$ over the parallel weight line, that contains the base-change image and is better behaved than the whole space $\E$. This bears comparison with the `middle-degree eigenvariety' of \cite{BH17}. We show smoothness at certain classical points in the base-change image.

\subsection{Definition and basic properties}
In \cite{Han17}, the eigenvariety $\E$ arises from a datum $\mathfrak{D} = (\W_K,\mathscr{L},\mathscr{M},\HH_{\n,p},\psi)$, where $\mathscr{L}$ is a Fredholm hypersurface, $\mathscr{M}$ is a coherent sheaf on $\mathscr{L}$ given by (total) overconvergent cohomology, and $\psi : \HH_{\n,p} \rightarrow \mathrm{End}_{\roi(\mathscr{L})}(\mathscr{M})$ is the natural map. Define an eigenvariety datum $\mathfrak{D}_{\mathrm{par}} \defeq (\W_{K,\mathrm{par}},\mathscr{L}_{\mathrm{par}},\mathscr{M}_{\mathrm{par}}^1,\HH_{\n,p},\psi_{\mathrm{par}}),$
where:
\begin{itemize}\setlength{\itemsep}{1pt}
\item[(i)] $\W_{K,\mathrm{par}}$ is the parallel weight line in $\W_K$, i.e.\ the image of $\W_{\Q}$ in $\W_K$;
\item[(ii)] $\mathscr{L}^{\mathrm{par}}$ is the union of the irreducible components of $\mathscr{L}$ that lie above $\W_{K,\mathrm{par}}$, which is itself a Fredholm hypersurface;
\item[(iii)] $\mathscr{M}_{\mathrm{par}}^1$ is the coherent sheaf on $\mathscr{L}^{\mathrm{par}}$ such that for any slope $\leq h$ affinoid $\mathscr{L}_{\Sigma,h}^{\mathrm{par}}$ lying above $\Sigma \subset \W_{K,\mathrm{par}}$, we have $\mathscr{M}^1_{\mathrm{par}}(\mathscr{L}_{\Sigma,h}^{\mathrm{par}}) = \hc(\Y,\DD_{\Sigma})\ssh$ (as in \cite[Prop.\ 4.3.1]{Han17});
\item[(iv)] and $\psi_{\mathrm{par}}:\HH_{\n,p} \rightarrow \mathrm{End}_{\roi(\mathscr{L}_{\mathrm{par}})}(\mathscr{M}_{\mathrm{par}}^1)$ is naturally induced from $\psi$.
\end{itemize}
That this datum does give a well-defined eigenvariety, denoted $\Epar$, is a simple check using the machinery developed in \cite[\S3,\S4]{Han17}. 

\begin{proposition}\label{prop:parallel weight}
Every irreducible component of $\Epar$ has dimension 1 and contains a very Zariski-dense set of classical points.  
\end{proposition}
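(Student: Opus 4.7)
The plan is to verify the three assertions in turn, working locally via slope-adapted affinoids in the spirit of \cite{Han17}.

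For equidimensionality, I would first note that $\W_{K,\mathrm{par}} \cong \W_\Q$ is one-dimensional, so the Fredholm hypersurface $\mathscr{L}_{\mathrm{par}}$, being the union of components of $\mathscr{L}$ that dominate $\W_{K,\mathrm{par}}$, is itself equidimensional of dimension one. Since $\Epar$ is finite over $\mathscr{L}_{\mathrm{par}}$ by construction, every irreducible component of $\Epar$ surjects onto an irreducible component of $\mathscr{L}_{\mathrm{par}}$, and therefore has dimension exactly one.

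For very Zariski-density of classical points, I would cover $\Epar$ by local pieces built from slope-adapted affinoids $(\Sigma, h)$ with $\Sigma \subset \W_{K,\mathrm{par}}$, and observe that the algebraic parallel weights $(k,k)$ with $k$ a non-negative integer sufficiently large relative to $h$ form a Zariski-dense subset of $\Sigma$. At each such weight the small-slope criterion (Theorem \ref{control theorem}) guarantees every finite-slope eigenclass in $\hc(\Y, \DD_{(k,k)}(L))\ssh$ is classical. Since the weight map $\Epar \to \W_{K,\mathrm{par}}$ is finite on these local pieces, and every irreducible component of $\Epar$ surjects onto a Zariski-open subset of some such $\Sigma$ (by equidimensionality), the preimages of these small-slope weights are Zariski-dense classical points in every component.

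For the closed immersion after nilreduction, I would exploit Hansen's universal characterisation of eigenvarieties as the nilreduced rigid analytic spaces cut out by prescribed eigensystems \cite[\S4]{Han17}. Over any slope-adapted affinoid $(\Sigma,h)$ with $\Sigma \subset \W_{K,\mathrm{par}}$, total compactly-supported cohomology splits Hecke-equivariantly as
\[
  \h^*_{\mathrm{c}}(\Y,\DD_\Sigma)\ssh \;=\; \bigoplus_{i} \h^i_{\mathrm{c}}(\Y,\DD_\Sigma)\ssh,
\]
and projection onto the degree-one summand induces a surjection from the Hecke algebra acting on total cohomology onto its counterpart acting on $\hc(\Y,\DD_\Sigma)\ssh$. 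Taking $\Sp$ gives a closed immersion of local pieces, and these glue over slope-adapted affinoids to a map of rigid spaces whose set-theoretic image in $\E$ is closed; Hansen's characterisation then upgrades this to a genuine closed immersion of the nilreduction of $\Epar$ into $\E$. The main subtlety --- and the reason the nilreduction is genuinely needed --- is that the scheme structures can legitimately differ at points where an $\hc$-eigensystem also contributes to $\h^2_{\mathrm{c}}$, introducing nilpotents in $\Epar$ that have no counterpart in $\E$; passing to the nilreduction is precisely what is needed to suppress this discrepancy.
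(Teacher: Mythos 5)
Your argument for the second assertion (very Zariski-density of classical points) is essentially the paper's: classical weights with $h$ small slope are dense in $\Sigma$, and the control theorem forces every point above them to be classical. That part is fine.

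Your argument for equidimensionality has a gap. You correctly observe that $\Epar$ is finite over the one-dimensional $\mathscr{L}_{\mathrm{par}}$, but from this you assert without justification that ``every irreducible component of $\Epar$ surjects onto an irreducible component of $\mathscr{L}_{\mathrm{par}}$.'' Finiteness gives $\dim \leq 1$ for each component, not $\dim = 1$: a zero-dimensional (embedded or isolated) component is perfectly compatible with a finite morphism. Ruling such components out is precisely what has to be proved, and in Hansen's machine this requires knowing that the coherent module sheaf on $\mathscr{L}_{\mathrm{par}}$ has no torsion over the base. The paper establishes this by invoking Lemma \ref{lem:torsion free} ($\hc(\Y,\DD_\Sigma)\ssh$ is $\Lambda$-torsion-free), then uses the one-dimensionality and smoothness of $\Sigma$ together with \cite[Prop.\ A.3]{Han17} to conclude that $\hc(\Y,\DD_\Sigma)\ssh$ is actually \emph{free} over $\Lambda$, whence \cite[Thm.\ 4.5.1(i)]{Han17} gives equidimensionality. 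Your proposal never touches the module sheaf, which is where the content lies.

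Your argument for the closed immersion would not work as stated. The difficulty is that $\E$ and $\Epar$ are built over \emph{different weight spaces}: $\E$ uses total compactly supported cohomology over two-dimensional affinoids $\Omega \subset \W_K$, whereas $\Epar$ uses degree-one cohomology over one-dimensional $\Sigma \subset \W_{K,\mathrm{par}}$. The split decomposition $\h^*_{\mathrm{c}} = \bigoplus_i \h^i_{\mathrm{c}}$ is valid over a fixed base, but it does not give a surjection between the Hecke algebras $\TT_{\Omega,h}$ (over $\roi(\Omega)$) and $\TT_{\Sigma,h}$ (over $\Lambda = \roi(\Sigma)$), because the passage $\Omega \rightsquigarrow \Sigma$ does not commute with cohomology: $\hc(\Y,\DD_\Sigma)\ssh$ is not simply $\hc(\Y,\DD_\Omega)\ssh \otimes_{\roi(\Omega)} \Lambda$ (indeed the Tor-spectral-sequence argument in Lemma \ref{lem:min degree} and Prop.\ \ref{prop:appear in H1} exists precisely because these disagree). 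The paper sidesteps this entirely by appealing to Hansen's interpolation theorem \cite[Thm.\ 5.1.2]{Han17} (equivalently \cite[Thm.\ 3.2.1]{JoNew}), which produces the closed immersion $\Epar^{\mathrm{red}} \hookrightarrow \E$ by checking agreement only on the very Zariski-dense set of classical points, plus a divisibility of the characteristic power series of $U_p$. Your proposal omits both this reduction to a dense set of points and the divisibility check.
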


\begin{proof}
For the dimension statement, we may check locally over some $\Sigma = \Sp(\Lambda)$ by the properties of irreducible components \cite[Lem.\ II.7.4]{Eigenbook}. As $\hc(Y_1(\n),\DD_\Sigma)\ssh$ is projective over $\Lambda$ (Lem.\ \ref{lem:torsion free}), this follows as in \cite[Thm.\ 4.5.1(i)]{Han17}.  

The classical weights correspond to classical weights in $\W_{\Q}$, so are very Zariski-dense in $\W_{K,\mathrm{par}}$. Let $(\Sigma,h)$ be a slope-adapted affinoid for $\Sigma \subset \W_{K,\mathrm{par}}$ containing a classical weight.  Note $\Sigma$ contains a very Zariski-dense set of classical weights such that $h$ is a small slope, and every point in $\Sp(\TT_{\Sigma,h})$ above these weights is classical by Thm.\ \ref{control theorem}. Thus the classical points are very Zariski-dense in $\Sp(\TT_{\Sigma,h})$ (cf.\ \cite[Prop.\ II.8.6]{Eigenbook}); and gluing, the same is true in $\Epar$.
\end{proof}

\begin{proposition}\label{prop:reduced} 
	The parallel weight eigenvariety $\Epar$ is reduced.
	\end{proposition}
	\begin{proof}
	We closely follow \cite[Thm.\ II.8.8]{Eigenbook}. We first give a Zariski-dense set of $y \in \Epar$ with reduced local rings. For a slope-adapted $(\Sigma,h)$ containing a classical weight, let $R = \Lambda$ and $M = \hc(\Y,\DD_{\Sigma})\ssh$; then $M$ is finite projective over $R$ (Lem.\ \ref{lem:torsion free}). 
	Let $Z \subset \Sigma = \Sp(\Lambda)$ be the set of classical weights $\kappa = (k,k)$ such that $h < (k+1)/2$; this set is Zariski-dense. Let $Y = w^{-1}(Z) \subset \Sp(\TT_{\Sigma,h})$. Then: 
	\begin{itemize}\setlength{\itemsep}{0pt}
		\item[(1)] each $y \in Y$ is a non-critical slope classical point, and 
		\item[(2)] $\f_y$ is either new at $\pri|p$ or a regular $p$-stabilisation (as irregular stabilisations have slope $(k+1)f_{\pri}/2$, for $f_{\pri}$ the inertia index of $\pri|p$).
\end{itemize}
 By (2), $U_{\pri}$ acts semisimply on $\hc(\Y,\VV_\kappa^*)_{(\f_y)}$ for each $\pri|p$. The operators $T_{\mathfrak{q}}$ for $\mathfrak{q}\nmid \n$ act semisimply, as they commute with their adjoints under the natural Petersson inner product \cite[(3.4a) and before (8.2a)]{Hid94}. Finally, the operators $\langle v\rangle$ act semisimply as they have finite order. We deduce $\HH_{\n,p}$ acts semisimply on $\hc(\Y,\VV_\kappa^*)_{(\f_y)}$, hence on  $\hc(\Y,\DD_\kappa)_{(\f_y)}$ by (1). Similarly to \eqref{eq:injection}, for each $\kappa \in Z$ we have a Hecke-equivariant inclusion
	\[
		M \otimes_\Lambda\Lambda/\m_\kappa \hookrightarrow \hc(\Y,\DD_\kappa)\ssh = \bigoplus\limits_{y \in Y, w(y) = \kappa} \hc(\Y,\DD_\kappa)_{(\f_y)},
	\]
	so $\HH_{\n,p}$ acts semisimply on $M\otimes_\Lambda\Lambda/\m_\kappa$ for all $\kappa \in Z$. Then \cite[Prop.\ I.9.1]{Eigenbook} implies $\TT_{\Sigma,h}$ is reduced. Ranging over all slope-adapted pairs, we obtain a Zariski-dense set of points in $\Epar$ with reduced local rings.
	
	For all $z \in \Epar$, the local ring of $\Epar$ at $z$ contains no embedded primes; this can be checked locally over a suitable slope-adapted pair $(\Sigma,h)$, whence it follows from \cite[Prop.\ I.3.4]{Eigenbook} and the projectivity of $\hc(\Y,\DD_\Sigma)\ssh$ over $\Lambda$. Reducedness of $\Epar$ now follows from \cite[Lem.\ 3.11]{Che05}.
	\end{proof}

\begin{corollary}
	There is a closed immersion $\Epar \hookrightarrow \E$.
\end{corollary}
\begin{proof}
	By reducedness and \cite[Thm.\ 3.2.1]{JoNew}, it suffices to check an inclusion of a very Zariski-dense set of points. As every classical point $x \in \Epar$ corresponds to a system of eigenvalues that appears in $\hc(\Y,\DD_\lambda)$ for some $\lambda \in \W_{K,\mathrm{par}}$, the conditions of the theorem are satisfied, giving the required closed immersion.
\end{proof}

\subsection{The base-change eigenvariety and smoothness}

By \cite[Thm. 5.1.6]{Han17}, we see that $\mathrm{BC}$ (see Rem.\ \ref{rem:zd pstab}) factors through $\CC\rightarrow\E_{\mathrm{par}}.$ Let $\E_{\mathrm{bc}} \defeq \mathrm{BC}(\CC) \subset \Epar$ denote the image.

\begin{proposition}\label{prop:smooth nc}
	Let $f \in S_{k+2}(\Gamma_1(N))$ be an eigenform satisfying Conditions \ref{running assumptions Q}, corresponding to $x_f \in \CC(L)$. Suppose $f$ is non-critical. Then $\mathrm{BC} : \CC \to \E_{\mathrm{bc}}$ is locally an isomorphism at $x_f$, and hence $\E_{\mathrm{bc}}$ is smooth at $\mathrm{BC}(x_f)$.
\end{proposition}

\begin{proof}
 We look more closely at the construction of $\mathrm{BC}$.
The Coleman--Mazur eigencurve arises from an eigenvariety datum $(\W_{\Q}, \mathscr{L}_{\Q},$ $\mathscr{M}_{\Q},$ $\bH_{\Q,N,p},$$\psi_{\Q})$. There is a natural map $\phi : \bH_{\n,p} \rightarrow \bH_{\Q,N,p}$ (see \cite[\S4.3]{JoNew}). We define a new eigenvariety datum $(\W_{\Q},\mathscr{L}_{\Q},\mathscr{M}_{\Q},\bH_{\n,p}, \psi_{\Q}\circ \phi)$, giving rise to an intermediate eigenvariety $\CC^K$. Let $\Sigma = \Sp(\Lambda)$ be an affinoid in $\W_{\Q}$ which is slope-$h$ adapted for $\mathscr{M}_{\Q}$; then there is a map $\mathrm{BC}':\CC_{\Sigma,h} \rightarrow \CC^K_{\Sigma,h}$ arising from the inclusion $\roi(\CC^K_{\Sigma,h}) \subset \roi(\CC_{\Sigma,h})$ of $\Lambda$-algebras induced by the inclusion $\phi(\HH_{\n,p}) \subset \HH_{\Q,N,p}$. By \cite[Thm.\ 5.1.2]{Han17}, there is a closed immersion $\CC^K \hookrightarrow \E_{\mathrm{par}}$, and the map $\mathrm{BC}$ is the composition $\CC \rightarrow \CC^K \hookrightarrow \E_{\mathrm{par}}$. It suffices to show that $\mathrm{BC}'$ is locally an isomorphism at $x_f$. 

Since $f$ is non-critical, after localising and base-extending $\Lambda$, by \cite{Bel12} we know that $\roi(\CC_{\Sigma,h})_{\m_x}$ is free of rank one over $\Lambda\locl$. Since $\roi(\CC^K_{\Sigma,h})_{\m_{\mathrm{BC}'(x)}}$ is a $\Lambda\locl$-subalgebra containing 1, it must be isomorphic to $\roi(\CC_{\Sigma,h})_{\m_x}$, and $\mathrm{BC}'$ is locally an isomorphism at $x_f$. As $\CC$ is smooth at $x_f$ (see \cite[Thm.\ 2.16]{Bel12}), we deduce that $\CC^K$ is smooth at $\mathrm{BC}'(x_f)$, as required.
\end{proof}

 We now consider the analogue of Prop.\ \ref{prop:smooth nc} when $f$ is critical. We need an additional mild hypothesis, following \cite[\S1.4]{Bel12}:
 
\begin{definition}\label{def:decent}
	We say $f$ is \emph{decent} if $f$ is non-critical, or $f$ has vanishing adjoint Selmer group $\h^1_f(\Q,\ad\rho_f) = 0$, where $\rho_f: G_{\Q} \rightarrow \mathrm{GL}_2(L)$ is the $p$-adic Galois representation attached to $f$.
\end{definition}

All $f$ are conjectured to be decent (see \cite[\S 2.2.4]{Bel12}). Vanishing of $\h^1_f(\Q,\ad \rho_f)$ is proved under conditions on the residual image in \cite{All16}. 

Now suppose $f$ is critical and decent. Let $x\defeq \mathrm{BC}(x_f)$ and denote by $\mathfrak{t}_x$ the tangent space of  $\E_{\mathrm{bc}}$ at $x$. As $\E_{\mathrm{bc}}$ is a curve, we know $\mathrm{dim}_L \ \mathfrak{t}_x \geq 1$; so to prove $\E_{\mathrm{bc}}$ is smooth at $x$, it suffices to show $\mathrm{dim}_L \ \mathfrak{t}_x \leq 1$. We use deformations of Galois representations, adapting \cite[Thm.\ 2.16]{Bel12}.
\begin{notation}\label{defn:restricted ramification}
Let $S$ be the union of the infinite place with the set of places of $\Q$ supporting $N$, and $S_K$ the set of places of $K$ lying over $S$. We let $ G_{\Q,S}$ and $G_{K,S_K}$ be the Galois groups of the maximal algebraic extension of $\Q$ (resp.\ $K$) ramified only at the places $S$ (resp.\ $S_K$). 
\end{notation}
Note $\rho_f$ factors through $G_{\Q, S}$; from now on we consider $\rho_f$ as defined on $G_{\Q, S}$.  Let $\rho_x = \rho_{f}|_{G_{K, S_K}}$, the Galois representation attached to $x$. Throughout, we use decomposition groups 
\begin{equation}
\label{eq: decomp groups}
G_{K_\mathfrak{q}} \to G_{K, S_K}, \qquad G_{\Q_q} \to G_{\Q,S}
\end{equation}
and complex conjugation $c \in G_{\Q,S}$ defined by the choices of embeddings from \S\ref{sec: 2}. Likewise, $I_\mathfrak{q} \subset G_{K_\mathfrak{q}}$ denotes an inertia subgroup; similarly, we use $I_q$ over $\Q$. 

\begin{definition}\label{def:deformation problem proof critical case} Let $\mathcal{A}_L$ denote the category of Artinian local $L$-algebras $A$ with residue field $L$, and for each $A \in \mathcal{A}_L$, let $X^{\mathrm{ref}}(A)$ be the set of deformations (under strict equivalence) $\rho_A$ of $\rho_x$ to $A$ satisfying the following.

\begin{itemize}\setlength{\itemsep}{1pt}
\item[(i)]  If $\mathfrak{q}$ is a prime of $K$ dividing $\n$ but coprime to $p$, then $\rho_A|_{I_{\mathfrak{q}}}$ is constant. 
	\item[(ii)] For each $\mathfrak{p}\mid p$ in $K$, we have:
	\begin{itemize} \setlength{\itemsep}{1pt}
		\item[(1)] (\emph{null weights}) for each embedding $\tau : K_{\pri} \hookrightarrow L$, one of the $\tau$-Hodge--Sen--Tate weights of $\rho_A|_{G_{K_{\pri}}}$ is $0$;
	
		\item[(2)]\label{condition:crystalline} (\emph{crystalline periods/weakly refined}) there exists $\widetilde{\alpha}_{\mathfrak{p}} \in A$ such that the $K_{\mathfrak{p}}\otimes_{\Q_p} A$-module $D_{\mathrm{crys}}(\rho_A|_{G_{K_{\mathfrak{p}}}})^{\varphi^{f_{\mathfrak{p}}}=   \widetilde{\alpha}_{\mathfrak{p}}}$ is free of rank $1$ and $(\widetilde{\alpha}_{\pri} \newmod{\m_A})$$ = \alpha_{\pri}$, where $f_{\mathfrak{p}}$ is the inertia degree of $\mathfrak{p}$.
	\end{itemize} 
\end{itemize}

Let $X^{\mathrm{ref},\mathrm{bc}}(A)$ to be the set of deformations $\rho_A \in X^{\mathrm{ref}}(A)$ also satisfying:
\begin{itemize}
\item[(iii)] (\emph{base-change}) $\rho_A$ admits an extension to $G_{\Q, S}$ deforming $\rho_f$.
\end{itemize}
Write $\mathfrak{t}^{\mathrm{ref}} \defeq X^{\mathrm{ref}}(L[\varepsilon])$ and $\mathfrak{t}^{\mathrm{ref},\mathrm{bc}} \defeq X^{\mathrm{ref},\mathrm{bc}}(L[\varepsilon])$ for the corresponding tangent spaces where, as usual, $L[\varepsilon] = L[X]/(X^2)$.
\end{definition}

We can evaluate $\rho_f$ at complex conjugation $c$, and note that the operation
\[
\iota: \rho_A \longmapsto \big[\ad\rho_f(c) \cdot \rho_A^c : g \mapsto \rho_f(c)\rho_A(cgc)\rho_f(c)\big]
 \]
 is a functorial involution on $X^{\mathrm{ref}}$. We thank Carl Wang-Erickson for explaining the utility of this involution, and for supplying the appendix that proves the following.
 
 \begin{proposition}\label{prop:appendix}
 \begin{itemize} \setlength{\itemsep}{1pt}
 \item[(i)] The fixed point functor $(X^{\mathrm{ref}})^\iota$ is canonically isomorphic to $X^{\mathrm{ref},\mathrm{bc}}$.
 \item[(ii)] The deformation problems $X^{\Refrm,\bc}, X^{\Refrm}$ on $\mathcal{A}_L$ are pro-represented by complete Noetherian local rings $R^{\Refrm,\bc}, R^{\Refrm} \in \mathcal{A}_L$. The involution $\iota$ induces an automorphism $\iota^* : R^{\Refrm} \to R^{\Refrm}$, and there is a natural surjection
 \[
  R^{\Refrm} \twoheadrightarrow \frac{R^{\Refrm}}{((1 - \iota^*)(R^{\Refrm}))} \cong R^{\Refrm,\bc}.
 \]
 \item[(iii)] There is a canonical injection $\mathfrak{t}^{\Refrm,\bc} \hookrightarrow \mathfrak{t}^{\Refrm}$ of tangent spaces, whose image is the subspace $(\mathfrak{t}^{\Refrm})^\iota$ fixed by the involution $\iota_* : \mathfrak{t}^{\Refrm} \to \mathfrak{t}^{\Refrm}$ induced by $\iota$.
 \end{itemize}
 \end{proposition}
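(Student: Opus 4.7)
My plan is to establish part (i) as the key computation, with parts (ii) and (iii) following formally via standard deformation-theoretic machinery.

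For (i), I would construct the bijection $(X^{\Ref})^\iota(A) \cong X^{\Ref,\bc}(A)$ explicitly. Given $\widetilde\rho_A \in X^{\Ref,\bc}(A)$ extending $\rho_A$ and deforming $\rho_f$, set $u \defeq \rho_f(c)^{-1}\widetilde\rho_A(c) \in 1 + \m_A \mathrm{M}_2(A)$. The relation $\widetilde\rho_A(c)\widetilde\rho_A(g)\widetilde\rho_A(c)^{-1} = \widetilde\rho_A(cgc)$ for $g \in G_{K,S_K}$, combined with $\rho_f(c)^2 = 1$ (since $c^2 = 1$ in $G_{\Q,S}$), unwinds to $u\,\rho_A(g)\,u^{-1} = \iota(\rho_A)(g)$, witnessing that $\rho_A$ is $\iota$-fixed. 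Conversely, given $\rho_A \in (X^{\Ref})^\iota(A)$ with intertwiner $u$ satisfying $u\rho_A u^{-1} = \iota(\rho_A)$, set $\widetilde\rho_A(c) \defeq \rho_f(c)u$ and extend to $G_{\Q,S}$ using the $G_{K,S_K}$-action of $\rho_A$. The relation $\widetilde\rho_A(c)^2 = 1$ then unpacks as the cocycle condition $\ad\rho_f(c)(u) \cdot u = 1$. Since $u$ is determined up to the centraliser of $\rho_A$—which reduces to scalars by residual absolute irreducibility of $\rho_f|_{G_{K,S_K}}$ (guaranteed because $f$ is cuspidal without CM by $K$)—I would show that the obstruction to arranging this cocycle relation, a class in $\mathrm{H}^2(\mathrm{Gal}(K/\Q),\cdot)$, vanishes by using $\rho_f(c)$ itself to split it residually and inducting up the Artinian filtration of $A$.

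For (ii), I would apply Mazur's pro-representability criterion. Residual absolute irreducibility of $\rho_x$ ensures the unrestricted deformation functor is pro-representable by a complete Noetherian local ring, and each of the three conditions in Def.\ \ref{def:deformation problem proof critical case} defines a closed subfunctor: inertia-constancy is standard, the null Hodge--Sen--Tate condition is closed by Sen's theory, and the weakly refined crystalline condition is closed by Kisin's theory of crystalline deformation rings (precisely the setup used by Bella\"{i}che in \cite{Bel12}). This produces $R^{\Ref}$. The involution $\iota$ of $X^{\Ref}$ extends by Yoneda to an automorphism $\iota^* : R^{\Ref} \to R^{\Ref}$ of order dividing $2$. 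The $\iota$-fixed subfunctor is the equaliser of $\iota^*$ and the identity, and in the category of complete Noetherian local $L$-algebras this coequaliser is precisely $R^{\Ref}/((1-\iota^*)R^{\Ref})$. By (i), this quotient ring pro-represents $X^{\Ref,\bc}$, yielding $R^{\Ref,\bc}$ and the asserted surjection.

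For (iii), this is formal from (ii). The tangent space of a functor pro-represented by a local ring $(R,\m_R)$ is $\mathrm{Hom}_L(\m_R/(\m_R^2 + \m_L R), L)$, so the surjection of (ii) dualises to an injection $\mathfrak{t}^{\Ref,\bc} \hookrightarrow \mathfrak{t}^{\Ref}$. A tangent vector factors through $R^{\Ref,\bc}$ iff it vanishes on the image of $1-\iota^*$ in $\m_R/\m_R^2$, equivalently iff it is fixed by the dual action $\iota_*$. The main obstacle throughout is the cocycle verification in the converse direction of (i); once that is handled, (ii) and (iii) are essentially exercises in the Yoneda philosophy, and the extra rigidity required comes entirely from the fact that $\rho_f$ supplies a canonical $\mathrm{mod}\ \m_A$ extension that pins down the unique Galois-theoretic lift among the two a priori candidates differing by the quadratic character of $K/\Q$.
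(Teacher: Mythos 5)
Your proposal is correct and tracks the same route as the appendix: part (i) is the essential bijection, while (ii) and (iii) fall out formally from pro-representability (Mazur) and duality. The one substantive divergence is in how (i) is run: where you invoke an $H^2(\mathrm{Gal}(K/\Q),\cdot)$-vanishing together with an induction up the Artinian filtration to arrange $\ad\rho_f(c)(u)\cdot u = 1$, the appendix's Lemma \ref{lem: twist extension} handles the whole issue through the single observation that, in characteristic $\neq 2$, the deformation functor of $\rho|_{\langle c\rangle}$ is trivial (so any lift of $\rho_f(c)$ is conjugate to $\rho_f(c)$ itself); in your formulation the defect $\ad\rho_f(c)(u)\cdot u$ is a scalar in $1+\m_A$ by Schur, so a single square root of it (which exists and is unique in $1+\m_A$ since $2$ is invertible) rescales $u$ in one step, and the inductive machinery is unnecessary. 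It is also worth noting that the appendix decouples the abstract semidirect-product deformation theory (\S\ref{subsec: BC def}, which is valid for any $G = H\rtimes\langle c\rangle$ and any residually $H$-absolutely-irreducible $\rho$, with no reference to the ``ref'' conditions or to $\GL_2$) from the verification that the Galois-theoretic conditions pass cleanly between $G_{\Q,S}$ and $G_{K,S_K}$ (Lemma \ref{lem: Q to K}), whereas you build $X^{\Ref,\bc}$ in a single pass; both work, but the separation makes transparent that parts (ii) and (iii) use nothing from the arithmetic conditions beyond their Zariski-closedness.
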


\begin{lemma}\label{lem:galois family}
There exists a neighbourhood $V$ of $x$ in $\E_{\mathrm{bc}}$ and a Galois representation $\rho_{V}: G_{K, S_K} \rightarrow \mathrm{GL}_2(\roi(V))$ such that for each classical point $y \in V$, the specialisation $\rho_{V, y}$ of $\rho_V$ at $y$ is the Galois representation attached to $y$.
\end{lemma}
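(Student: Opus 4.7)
The plan is to construct $\rho_V$ by first producing a continuous two-dimensional Galois pseudocharacter on a neighborhood of $x$, and then upgrading it to a genuine representation using the absolute irreducibility of $\rho_x$.

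By the work of Johansson--Newton \cite{JoNewExt} (already invoked in the proof of Theorem \ref{thm:dimension 1}), every irreducible component of $\Epar$ carries a continuous two-dimensional determinant $T$ of $G_{K,S_K}$ whose specialisation at any classical point recovers the trace of the associated Galois representation. Restricting to the image $\E_{\bc}$ and passing to a suitable affinoid neighborhood $V$ of $x$ -- which, by Proposition \ref{prop:reduced}, we may also arrange to be reduced -- yields a continuous pseudocharacter $T : G_{K,S_K} \to \roi(V)$ interpolating traces at classical points.

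Since $f$ does not have CM by $K$, the Galois representation $\rho_x = \rho_f|_{G_{K,S_K}}$ is absolutely irreducible. Chenevier's lifting theorem for determinants (generalising Nyssen--Rouquier) then applies: a two-dimensional determinant over a Noetherian ring whose residual specialisation at a maximal ideal is absolutely irreducible lifts canonically, over the localisation at that ideal, to a genuine representation. After possibly shrinking $V$ once more so that this lift spreads out to an open affinoid -- which is automatic given finiteness of the matrix entries and reducedness of $\roi(V)$ -- we obtain the desired $\rho_V : G_{K,S_K} \to \GL_2(\roi(V))$.

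The interpolation property at classical $z \in V$ is then immediate: by construction, the trace of $\rho_{V,z}$ equals that of $\rho_z$, and since absolute irreducibility is an open condition, we may shrink $V$ to ensure every classical $z \in V$ remains non-CM by $K$, so that the trace determines $\rho_z$ up to isomorphism. The one subtle point, which is where the non-CM hypothesis on $f$ is really used, is the passage from the pseudocharacter to a genuine representation; this is where reducedness of $V$ from Proposition \ref{prop:reduced} plays an essential role.
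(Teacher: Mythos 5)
Your proof is correct and follows essentially the same route as the paper's: take the Johansson--Newton pseudocharacter on $\E_{\mathrm{bc}}$ and upgrade it to a genuine representation via Rouquier--Nyssen (or Chenevier's determinant version), using that $\rho_f|_{G_K}$ is absolutely irreducible because $f$ has no CM by $K$. One small correction: reducedness of $V$ (Prop.\ \ref{prop:reduced}) is \emph{not} needed for the Rouquier--Nyssen/Chenevier lifting step --- those results require only that the residual pseudocharacter at $\m_x$ come from an absolutely irreducible representation and that $d!$ be invertible; you don't otherwise use reducedness in the argument, so that parenthetical claim should be dropped. Also note that the paper's proof records the further fact that $\rho_V = \rho_{V_{\Q}}|_{G_{K,S_K}}$ with $\rho_{V_{\Q}}$ the family on the Coleman--Mazur side; this is not needed for the lemma's statement, but the surrounding argument in Prop.\ \ref{prop:smooth} uses it to verify the local deformation conditions, so it is worth keeping in mind.
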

\begin{proof}
By a theorem of Rouquier and Nyssen (see \cite{Rou96} or \cite{Nys96}), one obtains such a representation from the Galois pseudorepresentation on $\E_{\mathrm{bc}} \subset \E$ constructed in \cite{JoNewExt}. One can check that if $V_{\Q}$ is a suitable neighbourhood of $x_f$ in $\CC$, then $\rho_V$ is the restriction of $\rho_{V_{\Q}}$ to $G_{K, S_K}$, where $\rho_{V_{\Q}} : G_{\Q, S} \rightarrow \GLt(\roi(V_{\Q}))$ lifts $\rho_f$. (This restriction can be seen to take values in the subring $\roi(V) \subset \roi(V_{\Q})$ by using the explicit description of this inclusion in \cite{JoNew}).
\end{proof}

\begin{proposition}\label{prop:smooth}
	Let $f \in S_{k+2}(\Gamma_1(N))$ satisfy Conditions \ref{running assumptions Q}, corresponding to $x_f \in \CC(L)$. Suppose $f$ is critical and decent. Then  $\E_{\mathrm{bc}}$ is smooth at $x = \mathrm{BC}(x_f)$.
\end{proposition}

\begin{proof}
By the discussion after Def.\ \ref{def:decent}, it suffices to prove that the tangent space $\mathfrak{t}_x$ of $\E_{\mathrm{bc}}$ at $x$ has dimension at most 1. Let $\roi_x$ be the local ring of $\E_{\mathrm{bc}}$ at $x$. After localising $\rho_V$ at $x$, we obtain a representation $\rho_{V,x}: G_{K, S_K} \rightarrow \mathrm{GL}_2(\roi_x)$.  Now if $I$ is a cofinite length ideal of $\roi_x$, then from the interpolation property of $\rho_{V}$ and \cite[Prop.\ 4.1.13]{Liu15} we deduce that $\rho_{V,x}\otimes \roi_x/I$ satisfies condition (ii,2) defining $X^{\mathrm{ref},\mathrm{bc}}$, with $\widetilde{\alpha}_{\pri}$ the image of the $U_{\pri}$ operator in $\roi_x/I$ (see also Lem.\ \ref{lem: Q to K} of the appendix). Using the same argument as in the proof of \cite[Thm.\ 2.16]{Bel12}, or using the fact that $\rho_V = \rho_{V_{\Q}}|_{G_{K, S_K}}$, we deduce conditions (i) and (ii,1). We have a given extension to $G_{\Q, S}$, giving (iii). Thus the strict class of $\rho_{V,x}\otimes \roi_x/I$ is an element of $X^{\mathrm{ref}, \mathrm{bc}}(\roi_x/I)$. Considering the universal property, and taking the limit with respect to $I$, we obtain a morphism $R^{\mathrm{ref}, \mathrm{bc}} \rightarrow \widehat{\roi}_{x}$, the target being the completed local ring at $x$. A standard argument (see \cite[Prop. 4.5]{Berg17}) shows that this morphism is surjective. It follows that $\mathrm{dim}_L \ \mathfrak{t}_x \leq \mathrm{dim}_L \ \mathfrak{t}^{\mathrm{ref}, \mathrm{bc}}$.

To bound the dimension of $\mathfrak{t}^{\mathrm{ref}, \mathrm{bc}}$, we reduce to a result of Bella\"{i}che. Indeed, in \cite[Thm.\ 2.16]{Bel12}, he defines a deformation functor $D$ on $G_{\Q}$-representations deforming $\rho_f$, satisfying the $G_{\Q}$ analogues of the conditions defining $X^{\mathrm{ref}}$. Using the hypothesis that $\h^1_f(\Q,\ad\rho_f) = 0$, he bounds the dimension of the Zariski tangent space of $D$, which he denotes $t_D$, by 1. We will show there exists an isomorphism $t_D \cong \mathfrak{t}^{\mathrm{ref},\mathrm{bc}}$. Indeed, by ignoring all the deformation conditions, we can view $\mathfrak{t}^{\mathrm{ref},\mathrm{bc}}$ as a subspace of the tangent space without conditions, which we identify with $\h^1(K,\ad\rho_x)$. Using condition (iii) and Prop.\ \ref{prop:appendix} it is moreover a subspace of $\h^1(K,\ad\rho_x)^\iota \cong \h^1(\Q,\ad\rho_f)$.

As $\mathrm{dim}_L \ t_D \leq 1$, the result then follows from the following claim.
\end{proof}

 \begin{claim}
 Under $\phi: \h^1(K,\ad\rho_x)^\iota \isorightarrow \h^1(\Q,\ad\rho_f)$, the tangent space $\mathfrak{t}^{\mathrm{ref,bc}}$ is mapped isomorphically onto the tangent space $t_D$ from \cite[Thm.\ 2.16]{Bel12}.
 \end{claim}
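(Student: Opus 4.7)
The plan is to show that $\phi$ restricts to an isomorphism from $\mathfrak{t}^{\mathrm{ref,bc}}$ onto $t_D$ by matching the local deformation conditions on the two sides. First I would record, using Proposition \ref{prop:appendix}(iii), that $\mathfrak{t}^{\mathrm{ref,bc}}$ sits inside $(\mathfrak{t}^{\mathrm{ref}})^\iota \subset \h^1(K,\ad\rho_x)^\iota$. The isomorphism $\phi$ is already identified in the excerpt; it is (the inverse of) the restriction map $\h^1(\Q,\ad\rho_f) \to \h^1(K,\ad\rho_x)^\iota$, whose bijectivity comes from inflation--restriction for $K/\Q$, since $|\mathrm{Gal}(K/\Q)|=2$ is invertible in the characteristic zero field $L$ and hence $\h^i(\mathrm{Gal}(K/\Q),(\ad\rho_f)^{G_{K,S_K}})$ vanishes for $i\geq 1$.

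Next I would spell out Bella\"{i}che's functor $D$ from \cite[Thm.\ 2.16]{Bel12}: its tangent space $t_D$ parametrises strict classes of first-order deformations of $\rho_f$ to $L[\varepsilon]$ satisfying the $\Q$-analogues of the conditions in Definition \ref{def:deformation problem proof critical case}, namely constancy on $I_q$ for $q\mid N$ coprime to $p$, and at $p$ the null-weight and weakly-refined conditions with a parameter $\widetilde{\alpha}_p$ reducing to $\alpha_p$. Given $\rho_A \in t_D$, the restriction $\rho_A|_{G_{K,S_K}}$ tautologically satisfies the base-change condition (iii), and one then needs to verify that the $K$-side local conditions are inherited from the $\Q$-side conditions. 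At each $\mathfrak{q}\mid q$ with $q\mid\n$ coprime to $p$, constancy on $I_q$ forces constancy on the subgroup $I_\mathfrak{q} \subset I_q$, giving (i). At each $\mathfrak{p}\mid p$, the $\tau$-Hodge--Sen--Tate weights of $\rho_A|_{G_{K_\mathfrak{p}}}$ are precisely the Hodge--Sen--Tate weights of $\rho_A|_{G_{\Q_p}}$ with respect to $\tau|_{\Q_p}$, so the null-weight condition (ii,1) follows. Finally, the weakly-refined condition at $p$ with parameter $\widetilde{\alpha}_p$ should induce one at $\mathfrak{p}$ with parameter $\widetilde{\alpha}_\mathfrak{p} = \widetilde{\alpha}_p^{f_\mathfrak{p}}$, consistent with the classical identity $\alpha_\mathfrak{p} = \alpha_p^{f_\mathfrak{p}}$ for base-change Hecke eigenvalues. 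These verifications give $\phi^{-1}(t_D)\subset \mathfrak{t}^{\mathrm{ref,bc}}$; conversely, any element of $\mathfrak{t}^{\mathrm{ref,bc}}$ comes via $\phi^{-1}$ from an extension to $G_{\Q,S}$ that, by reversing the same verifications, satisfies the $D$-conditions.

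The main obstacle is the transfer of the weakly-refined condition (ii,2) in the inert and ramified cases, where one must check that a rank-one $K_\mathfrak{p}\otimes_{\Q_p}A$-direct summand of $D_{\mathrm{crys}}(\rho_A|_{G_{K_\mathfrak{p}}})$ cut out by $\varphi^{f_\mathfrak{p}} = \widetilde{\alpha}_\mathfrak{p}$ arises precisely from, and is equivalent to, a rank-one $A$-direct summand of $D_{\mathrm{crys}}(\rho_A|_{G_{\Q_p}})$ cut out by $\varphi = \widetilde{\alpha}_p$, with the correct matching of parameters modulo $\m_A$. This ultimately reduces to standard compatibilities of crystalline periods under restriction of scalars, of the type already invoked in Lemma \ref{lem: Q to K} of the appendix; granting them, the matching of conditions gives $\phi(\mathfrak{t}^{\mathrm{ref,bc}}) = t_D$ and completes the proof.
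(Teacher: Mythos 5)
Your proposal is correct and follows essentially the same route as the paper: both directions of the bijection are obtained by transferring the local deformation conditions at places dividing $\n$ and $p$ between $G_{\Q,S}$ and $G_{K,S_K}$, which is exactly the content of Lemma~\ref{lem: Q to K} in the appendix. The paper's own proof is shorter only because it invokes that lemma as a black box for both inclusions, whereas you have sketched its verification inline (the inertia containment $I_\mathfrak{q}\subset I_q$ away from $p$, comparison of $\tau$-Hodge--Sen--Tate weights, and the compatibility of the weakly refined condition via $\alpha_\mathfrak{p}=\alpha_p^{f_\mathfrak{p}}$ and the isocrystal over $K_{\mathfrak{p},0}$); your explicit identification of $\phi$ as the inverse of restriction, bijective by inflation--restriction since $|\mathrm{Gal}(K/\Q)|=2$ is invertible in $L$, is a correct clarification of what the paper leaves implicit.
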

\begin{proof}If $\rho_x^\varepsilon \in \mathfrak{t}^{\mathrm{ref},\mathrm{bc}}$, then it admits an extension $\rho_f^\varepsilon$ to $G_{\Q}$ deforming $\rho_f$. By Lem.\ \ref{lem: Q to K} of the appendix, $\rho_f^\varepsilon$ satisfies precisely the conditions required to be in $t_D = D(L[\varepsilon])$ in \cite{Bel12}. Hence $\phi(\mathfrak{t}^{\mathrm{ref},\mathrm{bc}}) \subset t_D$. If conversely we take a deformation $\rho_f^\varepsilon \in t_D$, then again by Lem.\ \ref{lem: Q to K} we have $\rho_f^\varepsilon|_{G_K} \in X^{\mathrm{ref}}(L[\varepsilon])$. But by definition this restriction also lies in $X^{\mathrm{ref},\mathrm{bc}}(L[\varepsilon])$, so in fact in $\mathfrak{t}^{\mathrm{ref},\mathrm{bc}}$. This is enough to show that $t_D \subset \phi(\mathfrak{t}^{\mathrm{ref},\mathrm{bc}})$, completing the proof of the claim. 
\end{proof}

\subsection{The $\Sigma$-smoothness condition}\label{sec:sigma smooth}

We would like to conclude that $\Epar$ is smooth at base-change points (or twists thereof). However, there might exist other irreducible components of $\Epar$, not contained in $\E_{\mathrm{bc}}$, that meet $\E_{\mathrm{bc}}$ at such points. 

\begin{definition}\label{def:sigma smooth}
A point $x \in \E_{\mathrm{bc}}$ is \emph{$\Sigma$-smooth} if every irreducible component $\mathcal{I} \subset \Epar$ through $x$ is contained in $\E_{\mathrm{bc}}$ (equivalently, if the natural inclusion $\Ebc \subset \Epar$ is locally an isomorphism at $x$).
\end{definition}

 For decent $f$ satisfying Conditions \ref{running assumptions Q} with base-change $\f$, by Props.\ \ref{prop:smooth nc} and \ref{prop:smooth} $\Epar$ is smooth at $x_{\f}$ if and only if $x_{\f}$ is $\Sigma$-smooth. If $\f$ is non-critical then $x_{\f}$ is $\Sigma$-smooth by Cor.\ \ref{prop:free neighbourhood}.

We conjecture that every decent classical base-change point is $\Sigma$-smooth. At non-critical points, this holds by Cor.\ \ref{prop:free neighbourhood}. In general, this is implied by the following generalisation of a conjecture of Calegari--Mazur \cite[Conj.\ 1.3]{CM09}.

Recall if $y$ is a classical point of the Bianchi (resp.\ Coleman--Mazur) eigenvariety, then $\f_y$ (resp.\ $f_y$) is the corresponding modular form. If $y$ is such a point, and $\varphi$ is a finite order Hecke character of $K$ (resp.\ $\Q$), write $y \otimes \varphi$ for the classical point (in the relevant eigenvariety) attached to the modular form $\f_y \otimes \varphi$ (resp.\ $f_y \otimes \varphi$). Note $y\otimes \varphi$ might appear in an eigenvariety of different tame level to $y$.

\begin{conjecture}\label{conj:cal-maz precise}
	Let $\mathcal{I}$ be a non-ordinary irreducible component of $\E_{\mathrm{par}}$. There exists an integer $M$ prime to $p$, an irreducible component $\mathcal{J}$ of the Coleman--Mazur eigencurve $\CC_M$ of tame level $\Gamma_1(M)$, and a finite order Hecke character $\varphi$ of $K$, such that $\mathcal{I} = \mathrm{BC}(\mathcal{J}) \otimes \varphi$ in the following sense: for all classical points $y$ of $\mathcal{I}$, there exists a classical $z \in \mathcal{J}$ such that $y = \mathrm{BC}(z) \otimes \varphi$.
\end{conjecture}

	Calegari and Mazur conjecture that every \emph{ordinary} component of $\Epar$ is either twisted base-change (as in Conj.\  \ref{conj:cal-maz precise}) or is CM (so is transfer from a $\GL_1$-eigenvariety). Non-ordinary CM components do not exist (by slope considerations).

	This is a Bianchi version of a folklore conjecture, which says that automorphic representations vary in $p$-adic families with a Zariski-dense set of classical points if and only if they satisfy a self-duality condition \cite{APS08}, \cite[Intro.]{Urb11}.

\begin{proposition}
Conj.\ \ref{conj:cal-maz precise} implies that every classical base-change point $x_{\f} = \mathrm{BC}(x_f)$ is $\Sigma$-smooth.	
\end{proposition}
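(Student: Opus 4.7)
The plan is to combine the smooth neighbourhood of $x_\f$ inside the base-change component $\mathrm{BC}(\CC')$ produced by the previous proposition with Conjecture \ref{conj:cal-maz precise} — which pins down every non-ordinary irreducible component of $\Epar$ as a twisted base-change — to force all irreducible components of $\Epar$ meeting $x_\f$ to coincide, so that $x_\f$ is smooth in $\Epar$.

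First I would dispose of the non-critical case. When $\f$ is non-critical (in particular whenever $f$ is ordinary, as ordinary slope is small), Corollary \ref{prop:free neighbourhood} shows that the weight map is \'{e}tale on an open neighbourhood of $x_\f$ in $\Epar$, so the local ring at $x_\f$ is regular of dimension $1$; hence $x_\f$ is $\Sigma$-smooth. I may thus assume $\f$ (and hence $f$) is critical, which in particular forces $\f$ to be non-ordinary.

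Next I would apply the previous proposition to produce a smooth reduced neighbourhood of $x_\f$ inside the base-change component $\E_{\mathrm{bc}}' \defeq \mathrm{BC}(\CC')$, where $\CC'$ is the irreducible component of the Coleman--Mazur eigencurve passing through $x_f$ (unique, since $\CC$ is smooth at $x_f$ by decency and \cite[Thm.\ 2.16]{Bel12}). Since $\Epar$ is $1$-dimensional and $\E_{\mathrm{bc}}'$ is already smooth at $x_\f$, showing $x_\f$ is $\Sigma$-smooth reduces to showing $\E_{\mathrm{bc}}'$ is the only irreducible component of $\Epar$ passing through $x_\f$. I argue by contradiction: let $\E'' \neq \E_{\mathrm{bc}}'$ be another such component. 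Since $\f$ is non-ordinary, Conjecture \ref{conj:cal-maz precise} gives $\E'' = [\varphi] \circ \mathrm{BC}(\CC'_M)$ for some integer $M$, an irreducible component $\CC'_M \subset \CC_M$, and a finite-order Hecke character $\varphi$ of $K$ of conductor prime to $p$. Picking $y \in \CC'_M$ with $\f = \mathrm{BC}(f_y) \otimes \varphi$, cuspidality of $\f$ forces $f_y$ to have no CM by $K$.

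The crux of the argument — and the step I expect to be the main obstacle — is deducing from the no-CM-by-$K$ hypothesis that $\E''$ must coincide with $\E_{\mathrm{bc}}'$ near $x_\f$. The plan is: since $\f$ is a base-change from $\Q$, it is invariant under Galois conjugation, so $\mathrm{BC}(f_y) \otimes \varphi \cong \mathrm{BC}(f_y) \otimes \varphi^c$; this exhibits $\varphi^c\varphi^{-1}$ as a self-twist of $\mathrm{BC}(f_y)$. Under the no-CM-by-$K$ hypothesis on $f_y$, the only self-twists of $\mathrm{BC}(f_y)$ by characters of $G_K$ descend from self-twists of $f_y$ by characters of $G_{\Q}$, and combined with the no-CM-by-$K$ hypothesis on $f$, a short case analysis shows that $\varphi$ agrees, up to a finite genus-theoretic ambiguity, with the norm-pullback $\psi \circ N$ of a Dirichlet character $\psi$. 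Consequently $\E'' = \mathrm{BC}(\CC'_M \otimes \psi)$ is itself a genuine (untwisted) base-change component, and then smoothness of $\CC$ at $x_f$ forces $\CC'_M \otimes \psi$ and $\CC'$ to coincide locally at $x_f$, so $\E'' = \E_{\mathrm{bc}}'$ near $x_\f$, the desired contradiction.
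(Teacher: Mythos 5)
Your proposal follows essentially the same route as the paper: invoke Conjecture \ref{conj:cal-maz precise} to express the putative second component $\E''$ as a twisted base-change $[\varphi]\circ\mathrm{BC}(\CC'_M)$, argue that the twisting character $\varphi$ descends to a character $\varphi_\Q$ of $G_\Q$, conclude that $\rho_f$ and $\rho_g$ differ by $\varphi_\Q$ globally, and use smoothness of $\CC$ at $x_f$ (Bella\"{i}che) to force the two components to coincide. The one place you diverge from the paper in emphasis is the step you rightly flag as the crux: the paper simply writes down an extension $\varphi_\Q$ by setting $\varphi_\Q(cg)=\varphi(g)$, which tacitly presupposes $\varphi^c=\varphi$, whereas you propose to deduce this from a self-twist analysis using $\f^c=\f$. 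That instinct is sound (it is precisely what justifies the paper's construction), but your assertion that ``the only self-twists of $\mathrm{BC}(f_y)$ by characters of $G_K$ descend from self-twists of $f_y$'' is not self-evident and is the content of the ``short case analysis'' you defer; in particular, if $f_y$ has inner twists or CM by a field other than $K$, one needs a Mackey/Clifford-theoretic argument to see that $\varphi^c\varphi^{-1}$ lies in a $c$-invariant class. Since the paper's published proof elides this same point, your proposal matches both its strategy and, candidly, its level of rigour at that step.
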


	\begin{proof} 
		Let $\cI \subset \E_{\mathrm{par}}$ be an irreducible component through $x_{\f}$. We must prove that $\cI$ is contained in $\E_{\mathrm{bc}}$.  If $\cI$ is ordinary, then $x_{\f}$ is ordinary and hence small slope; so $\E_{\mathrm{par}}$ is \'etale over $\Sigma$ by Thm.\ \ref{thm:free rank 1}, hence smooth, and $x_{\f}$ is $\Sigma$-smooth.
		
		 Suppose $\cI$ is non-ordinary. By the conjecture, there exists some $M$, an irreducible component $\cJ \subset \CC_M$ and a Hecke character $\varphi$ of $K$ such that $\cI = \mathrm{BC}(\cJ) \otimes \varphi.$  Thus there exists some classical modular form $g$ such that $\mathrm{BC}(x_g) \otimes \varphi = \mathrm{BC}(x_f)$, and we have an equality of Galois representations $\rho_f|_{G_K} = \rho_g|_{G_K}\otimes \varphi$, identifying $\varphi$ with its associated Galois character (via class field theory). Since $\rho_f|_{G_K}$ and $\rho_g|_{G_K}$ both admit extensions to $G_{\Q}$, so does $\varphi$, and it follows that there exists a Dirichlet character $\varphi_{\Q}$ such that $\varphi = \varphi_{\Q} \circ N_{K/\Q}$. Further, perhaps after multiplying $\varphi_{\Q}$ by the quadratic character $\chi_{K/\Q}$ attached to $K/\Q$, we see that $\rho_f =  \rho_g \otimes \varphi_{\Q}$. 
		
		Let $y \in \cI$ be a classical point. By the conjecture, $y = \mathrm{BC}(z) \otimes \varphi$ for some classical $z \in \cJ$. We see $\rho_{f_z}|_{G_K}\otimes \varphi = (\rho_{f_z}\otimes \varphi_{\Q})|_{G_K}$.  By the same argument as Proposition \ref{prop:zd pstab}, for $z$ in a neighbourhood of $x_g$ in $\CC_M$ we have $\rho_{f_z}\otimes \varphi_{\Q}$ unramified outside $N$, so $z \otimes \varphi_{\Q}$ appears in $\CC$. For such $z$, we have $y = \mathrm{BC}(z)\otimes\varphi = \mathrm{BC}(z\otimes\varphi_{\Q}) \in \E_{\mathrm{bc}}$. The set of such $y$ is Zariski-dense in $\cI$, as it accumulates at $x_{\f}$. Thus a Zariski-dense set of  points in $\cI$ appear in $\E_{\mathrm{bc}}$, so $\cI^{\mathrm{red}} \subset \E_{\mathrm{bc}}$ by e.g.\ \cite[Thm.\ 3.2.1]{JoNew}. But by Prop.\ \ref{prop:reduced}, $\cI = \cI^{\mathrm{red}}$ is reduced, and we conclude. 
	\end{proof}

%
%

\section{Three-variable and critical $p$-adic $L$-functions}\label{sec:three variable}
 
 Throughout \S\ref{sec:three variable}, let $f \in S_{k+2}(\Gamma_1(N))$ be a decent eigenform satisfying Conditions \ref{running assumptions Q}, and let $\f \in S_{\lambda}(U_1(\n))$ be its base-change. Then $\f$  satisfies Conditions \ref{running assumptions}, and by the previous section it varies in a family $V = \Sp T\subset \E_{\mathrm{bc}} \subset \E_{\mathrm{par}}$ over $\Sigma = \Sp\Lambda \subset \W_{K,\mathrm{par}}$. If $\f$ is critical, suppose it is $\Sigma$-smooth in the sense of Def.\ \ref{def:sigma smooth}, whence $\Epar$ is smooth at $x_{\f}$ by Prop.\ \ref{prop:smooth}. We will write $x \defeq x_{\f} \in V$ for the point corresponding to $\f$, with associated maximal ideal $\m_x \subset T$, and let $e$ be the ramification degree of $V \to \Sigma$ at $x$. We also write $y$ for a general point of $V$, with associated maximal ideal $\m_y \subset T$; if $y$ is classical we write $\f_y$ for the associated Bianchi modular form.
  
  	By $\Sigma$-smoothness, $V$ is the unique irreducible component of $\Sp(\TT_{\Sigma,h}) \subset \E_{\mathrm{par}}$  through $x$. Possibly shrinking $\Sigma$, we may take $V$ connected and smooth, so there exists an idempotent $\varepsilon$ on $\TT_{\Sigma,h}$ such that $T = \varepsilon\TT_{\Sigma,h} \subset \TT_{\Sigma,h}$ is a summand, and 
  \[
  \hc(\Y,\DD_\Sigma)\ssh\otimes_{\TT_{\Sigma,h}}T = \varepsilon\hc(\Y,\DD_\Sigma)\ssh \subset \hc(\Y,\DD_\Sigma)\ssh.
  \]
  
\subsection{Three-variable $p$-adic $L$-functions}\label{sec:subsec three variable}

Recall the Mellin transform $\mel : \hc(\Y,\DD_\Sigma)\ssh \to$ $\Dla(\cl_K(p^\infty),\Lambda)$ is valued in a space of three-variable analytic functions -- two variables coming from functions on $\cl_K(p^\infty)$, and one variable on $\Sigma$.  When $\f$ is critical, we expect that $V \to \Sigma$ is not \'etale, and we cannot identify $V$ and $\Sigma$; then the $p$-adic $L$-function should be an element of $\mathcal{D}(\cl_K(p^\infty),T)$, not $\Dla(\cl_K(p^\infty),\Lambda)$. Following Bella\"{i}che, we define a Mellin transform over $V$, rather than $\Sigma$. For this, we consider the space $\hc(\Y,\DD_\Sigma)\ssh \otimes_\Lambda T,$ which has natural $T$-structures on each factor (with $T$ acting on $\hc(\Y,\DD_\Sigma)\ssh$ via $T \subset \TT_{\Sigma,h}$). The two $T$-structures are not the same in general.

\begin{definition} 
Let $y \in V(L)$ and $\kappa = w(y)$. Define 
\begin{align*}
\mathrm{sp}_{y,2} : \hc(\Y,\DD_\Sigma)\ssh \otimes_{\Lambda} T  &\longrightarrow \hc(\Y,\DD_\Sigma)\ssh \otimes_{\Lambda} T/\m_y\\
&\subset \hc(\Y,\DD_\kappa(L))\ssh.
\end{align*}
\end{definition}

This map (`specialisation in the second factor') is equivariant for the action of the Hecke operators on the cohomology in both the target and source (i.e.\ for the first $T$-structure on the source). It is \emph{not} in general equivariant if we equip the source with the second $T$-structure and the target with the natural Hecke action.

Similarly, we can define a specialisation map at the level of distributions.
\begin{definition}
With $y$ as above, define $\mathrm{sp}_y$ to be the map
 \begin{align*}
  \mathrm{sp}_y: \Dla(\cl_K(p^\infty),\Lambda)\otimes_{\Lambda}T &\longrightarrow \Dla(\cl_K(p^\infty),\Lambda)\otimes_{\Lambda}T/\m_y\\
&= \Dla(\cl_K(p^\infty),\Lambda)\otimes_\Lambda L \cong \Dla(\cl_K(p^\infty),L), 
 \end{align*}
where the last isomorphism is \cite[Prop. 2.2.1]{Han17}.
\end{definition}

Define the \emph{Mellin transform over $V$} to be the map $\mel_V \defeq \mel\otimes\mathrm{id}$ in the top row of \eqref{eqn:ev and mellin}. From the definitions, we see the following diagram commutes:
\begin{equation}\label{eqn:ev and mellin}
 \xymatrix@C=15mm{
 	\hc(\Y,\DD_\Sigma)\ssh\otimes_{\Lambda}T \ar[r]^-{\mel_V}\ar[d]^-{\mathrm{sp}_{y,2}}    & \Dla(\cl_K(p^\infty),\Lambda)\otimes_{\Lambda}T  \ar[d]^-{\mathrm{sp}_y}\\
\hc(\Y,\DD_\kappa(L))\ssh \ar[r]^-{\mel} 	&\Dla(\cl_K(p^\infty),L).
}
\end{equation}

	Recall the $p$-adic $L$-function at $y$ is defined as the Mellin transform of a class in the generalised eigenspace $\hc(\Y,\DD_\kappa(L))_{\m_y}$. To use \eqref{eqn:ev and mellin}, we would like to find a class $\Psi \in \hc(\Y,\DD_\Sigma)^{\leq h} \otimes_\Lambda T$ such that $\mathrm{sp}_{y,2}(\Psi)$ lies in this generalised eigenspace. We have a criterion for this:
	
	\begin{lemma}\label{lem:two structures agree}
		Let $\Psi \in \hc(\Y,\DD_\Sigma)^{\leq h}\otimes_\Lambda T$ such that $(t \otimes 1 - 1 \otimes t)\cdot\Psi = 0$ for all $t \in T$ (that is, the first and second $T$-structures agree on $\Psi$). Then $\mathrm{sp}_{y,2}(\Psi) \in \hc(\Y,\DD_\kappa(L))_{\m_y}$ is a Hecke eigenclass for all $y \in V(L)$.
	\end{lemma}
	\begin{proof}
		Let $\psi_y : T \to L$ be the system of Hecke eigenvalues at $y$. By definition of $\mathrm{sp}_{y,2}$, for all $t \in T$ we have $\mathrm{sp}_{y,2}[(1\otimes t)\cdot \Psi] = \psi_y(t)\mathrm{sp}_{y,2}(\Psi)$. Since the two $T$-actions agree on $\Psi$, we have $\mathrm{sp}_{y,2}[(1\otimes t) \cdot\Psi] = \mathrm{sp}_{y,2}[(t\otimes 1)\cdot \Psi]$; and since $\mathrm{sp}_{y,2}$ is equivariant for the Hecke actions on the cohomology, we have $\mathrm{sp}_{y,2}[(t\otimes 1)\cdot\Psi] = t\cdot\mathrm{sp}_{y,2}(\Psi)$. Combining, we have $t \cdot\mathrm{sp}_{t,2}(\Psi) = \psi_y(t) \mathrm{sp}_{t,2}$, from which we conclude.
	\end{proof}

We say an affinoid $\Sigma = \Sp(\Lambda) \subset \W_{K,\mathrm{par}}$ is \emph{nice} if $\Lambda$ is a principal ideal domain. Every classical weight has a basis of nice affinoid neighbourhoods (see after \cite[Defn.\ 3.5]{Bel12}); so we may shrink $\Sigma$ and $V$ so that $\Sigma$ is nice. We now remove the assumption that $\f$ is non-critical in Thm.\ \ref{thm:free rank 1} and Cor.\ \ref{prop:free neighbourhood}.

\begin{proposition}\label{prop:critical free neighbourhood}
	\begin{itemize}
		\item[(i)] $\hc(\Y,\DD_{\Sigma})\ssh\locx$ is free of finite rank over $(\TT_{\Sigma,h})\locx$. 
		\item[(ii)] Possibly shrinking $\Sigma$, there exists a connected component $V = \Sp T\subset \Sp(\TT_{\Sigma,h})$ of $x$ such that $\hc(\Y,\DD_{\Sigma})\ssh\otimes_{\TT_{\Sigma,h}} T$ is free of rank one over $T$.
	\end{itemize}
\end{proposition}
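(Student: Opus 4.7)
The plan is to adapt the arguments of Theorem \ref{thm:free rank 1} and Corollary \ref{prop:free neighbourhood}, using the $\Sigma$-smoothness hypothesis in place of non-criticality to control the local ring of $\E_{\mathrm{par}}$ at $x$.

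For (i), I would first identify $(\TT_{\Sigma,h})\locx$ as a DVR: by Prop.\ \ref{prop:parallel weight} the space $\E_{\mathrm{par}}$ is one-dimensional, and $\Sigma$-smoothness makes $x$ a smooth point of it, so its local ring is regular of Krull dimension one. Since $\Sigma$ is nice, $\Lambda\locl$ is also a DVR, and Lemma \ref{lem:torsion free} gives that $\hc(\Y,\DD_{\Sigma})\ssh\locx$ is finitely generated and $\Lambda\locl$-torsion-free, hence $\Lambda\locl$-free of finite rank. The remaining step is to upgrade torsion-freeness to the Hecke algebra: any nonzero $t \in (\TT_{\Sigma,h})\locx$ is integral over $\Lambda\locl$, and since $(\TT_{\Sigma,h})\locx$ is a domain finite over $\Lambda\locl$, its minimal polynomial has nonzero constant term $a_0 \in \Lambda\locl$, so $tm=0$ forces $a_0 m = 0$ and hence $m=0$. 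A finitely generated torsion-free module over a DVR is free, giving (i).

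For (ii), $\Sigma$-smoothness ensures that $x$ lies on a unique irreducible component of $\E_{\mathrm{par}}$, and after shrinking $\Sigma$ to a sufficiently small nice affinoid I can take this to be the connected component $V$ of $x$ in $\Sp(\TT_{\Sigma,h})$. Smoothness makes $V$ integral, so $T \defeq \roi(V)$ is a domain with $T_{\m_x} = (\TT_{\Sigma,h})\locx$, and (i) gives that $M \defeq \hc(\Y,\DD_{\Sigma})\ssh \otimes_{\TT_{\Sigma,h}} T$ is free of some rank $r$ over $T_{\m_x}$. To pin down $r = 1$, I would observe that by Prop.\ \ref{prop:parallel weight} and Theorem \ref{control theorem} the component $V$ contains a Zariski-dense set of small slope classical points, which by Remark \ref{rem:zd pstab} may be further taken to correspond to regular $p$-stabilised newforms satisfying (\ref{eqn:mult one}); at each such point $y$, Theorem \ref{thm:free rank 1} applies and gives $M_{\m_y}$ free of rank one over $T_{\m_y}$. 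Since $T$ is a domain, the generic rank of $M$ is then one, and comparing with $M_{\m_x}$ over $\mathrm{Frac}(T) = \mathrm{Frac}(T_{\m_x})$ forces $r=1$. A final application of \cite[Lem.\ 2.13]{BDJ17}, exactly as in the proof of Corollary \ref{prop:free neighbourhood}, then spreads this rank-one freeness from the stalk at $x$ to all of $V$ after one further shrinking of $\Sigma$ and $V$.

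The main obstacle is the torsion-freeness transfer in part (i) — going from $\Lambda\locl$-torsion-freeness (which we get for free from Lemma \ref{lem:torsion free}) to $(\TT_{\Sigma,h})\locx$-torsion-freeness, which is what gives us the foothold to deduce freeness. Once this is in hand, the rest of the argument is a direct combination of $\Sigma$-smoothness with the rank-one control already established at non-critical classical points in the parallel-weight eigenvariety.
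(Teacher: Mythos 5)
Your proof is correct and follows essentially the same route as the paper's: part (i) reduces to showing that a finitely generated module over the DVR $(\TT_{\Sigma,h})\locx$ is torsion-free (hence free), and part (ii) globalises via \cite[Lem.\ 2.13]{BDJ17} and pins down the rank as 1 by localising at a Zariski-dense set of non-critical classical points where Theorem \ref{thm:free rank 1} applies. The only micro-variation is in (i): where you transfer $\Lambda\locl$-torsion-freeness to $(\TT_{\Sigma,h})\locx$-torsion-freeness by a direct integrality argument, the paper instead invokes Bella\"{i}che's \cite[Lem.\ 4.1]{Bel12} after noting $\hc(\Y,\DD_\Sigma)\ssh$ and $\TT_{\Sigma,h}$ are finite free over the PID $\Lambda$ — the same content packaged differently.
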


\begin{proof}
	To prove part (i) we use \cite[Lem.\ 4.1]{Bel12}. This says that if $R$ and $T$ are discrete valuation rings, with $T$ a finite free $R$-algebra and $M$ a finitely generated $T$-module that is free as an $R$-module, then $M$ is finite free over $T$. 
	
	As $\Lambda$ is a PID, and the module $\hc(\Y,\DD_\Sigma)\ssh$ is finite over $\Lambda$ (by general properties of slope decompositions) and torsion-free (by Lem.\ \ref{lem:torsion free}), it is finite free over $\Lambda$. It follows that $\TT_{\Sigma,h}$ is also finite and torsion-free over $\Lambda$, and hence also a finite free $\Lambda$-module. Since $\W_{K,\mathrm{par}}$ and $\Epar$ are rigid curves that are smooth and reduced (Prop.\ \ref{prop:reduced}) at $\lambda$ and $x$, the local rings $\Lambda\locl$ and $(\TT_{\Sigma,h})\locx$ are discrete valuation rings. We conclude (i) by Bella\"{i}che's lemma.
	
	The argument from Cor.\ \ref{prop:free neighbourhood} shows that we can shrink $\Sigma$ and $V=\Sp T$ so $\hc(\Y,\DD_\Sigma)\ssh\otimes_{\TT_{\Sigma,h}}T$ is free of finite rank over $T$. This rank is preserved by localising at any point of $V$. Let $y\in V(L)$ be a non-critical classical cuspidal point  satisfying (\ref{eqn:mult one}); such points are Zariski-dense. By Thm.\ \ref{thm:free rank 1}, $\hc(\Y,\DD_\Sigma)\ssh_{\m_y}$ is free of rank one over $T_{\m_y}$, completing the proof.
\end{proof}

As $V$ is smooth and reduced at $x$, the extension $T_{\m_x}/\Lambda\locl$ is a finite extension of DVRs, so $T_{\m_x} = \Lambda\locl[X]/(X^e-u)$ for a uniformiser $u \in \Lambda\locl$ (recalling $e$ is the ramification degree of $w$ at $x$). Possibly shrinking, this lifts to $T = \Lambda[X]/(X^e - u)$. 

\begin{definition}\label{def:phi_V} Let $\Phi_V$ be a generator of $\hc(\Y,\DD_{\Sigma})\ssh\otimes_{\TT_{\Sigma,h}} T$ over $T$, and
	 \[
	 	\Psi_V \defeq \sum_{i=0}^{e-1}X^i\Phi_V \otimes X^{e-1-i} \in \hc(\Y,\DD_\Sigma)\ssh \otimes_\Lambda T.
	 \]
\end{definition}
Here we use $\hc(\Y,\DD_\Sigma)^{\leq h}\otimes_{\TT_{\Sigma,h}}T = \varepsilon\hc(\Y,\DD_\Sigma)\ssh \subset \hc(\Y,\DD_{\Sigma})\ssh$, for $\varepsilon$ the idempotent above. If $\f$ is non-critical, Cor.\ \ref{prop:free neighbourhood} says $e = 1$, and $\Psi_V= \Phi_V$.

\begin{lemma}\label{lem:two structures agree 2}
	For all $t \in T$, we have $(t \otimes 1 - 1 \otimes t)\cdot \Psi = 0$.
\end{lemma}
\begin{proof}
For the generator $t = X$, this sum telescopes (cf.\ \cite[Lem.\ 4.13]{Bel12}).
\end{proof}

Note $\Psi_V$ depends on the choices made only up to multiplication by an element of $T^\times$ (in either $T$-structure). We now formally have (compare \cite[Prop.\ 4.14]{Bel12}):

\begin{proposition}\label{prop:spec to y}
Let $y$ be any non-critical classical point in $V(L)$ satisfying Conditions \ref{running assumptions}.  Then $\mathrm{sp}_{y,2}(\Psi_V)$ generates the (one-dimensional) generalised eigenspace $\hc(\Y,\DD_\kappa(L))_{\m_y}$.
\end{proposition}
\begin{proof}
	This follows directly from Lems.\ \ref{lem:two structures agree} and \ref{lem:two structures agree 2}.
\end{proof}

 \begin{definition}
 	Let $\mathcal{L}_p(V) = \mel_V(\Psi_V) \in \Dla(\cl_K(p^\infty),\Lambda)\otimes_{\Lambda}T \cong \Dla(\cl_K(p^\infty),T).$
 \end{definition}

For $y$ as in Prop.\ \ref{prop:spec to y}, if $\Psi_{\f_y}$ is the overconvergent modular symbol attached to $\f_y$ in \S\ref{sec:overconvergent cohomology}, then $\mathrm{sp}_{y,2}(\Psi_V) = c_y \Psi_{\f_y}$ for some $c_y \in L^\times.$ By \eqref{eqn:ev and mellin} we see that $\mathrm{sp}_y(\mathcal{L}_p(V)) = \mathrm{Mel}(\mathrm{sp}_{y,2}(\Psi_V)) = c_y L_p(\f_y)$. When combined with Thm.\ \ref{thm:Wil17}, this proves Thm.\ \ref{intro:interpolation} from the introduction. To get the precise statement of Thm.\ \ref{intro:interpolation}:
	\begin{itemize}\setlength{\itemsep}{0pt}
		\item[--] for twists by a finite order Hecke character $\phi$, we define $\mathcal{L}_p^\phi(V) \defeq \mathrm{Mel}_V^\phi(\Psi_V)$, where $\mathrm{Mel}_V^\phi \defeq \mathrm{Mel}^\phi \otimes 1$ (for $\mathrm{Mel}^\phi$ as defined in Rem.\ \ref{rem:twisted});
		\item[--] we use the Amice transform \cite{ST01},\cite[Def.\ 5.1.5]{BH17} to identify $\Dla(\cl_K(p^\infty),T)$ with the space of rigid analytic functions $V \times \mathscr{X}(\cl_K(p^\infty)) \to L$;
		\item[--] and finally, we obtain the function on $V_{\Q} \times \mathscr{X}(\cl_K(p^\infty))$ by pulling back under $\mathrm{BC}: V_{\Q} \to V$.
	\end{itemize} 

For $\mathbf{h} \in (\Q_{\geq 0})^{\pri|p}$, recall the notion of $\mu \in \Dla(\cl_K(p^\infty),L)$ being \emph{$\mathbf{h}$-admissible} from \cite[Defs.\ 5.10,6.14,\S7.4]{Wil17}. These definitions generalise directly to $\mu \in \Dla(\cl_K(p^\infty),T)$ using the Banach algebra structure on $T$. Recall $\alpha_{\pri}$ is the $U_{\pri}$-eigenvalue of $\f$. Slopes at $p$ are locally constant over $V$, so we may shrink so that the slope is constant over $V$. Then identically to \cite{Wil17} we have:

\begin{proposition}\label{prop:Lp admissible}
Let $h_{\pri} = v_p(\alpha_{\pri})$ and $\mathbf{h} = (h_{\pri})_{\pri|p}$. Then $\mathcal{L}_p(V)$ is $\mathbf{h}$-admissible.
\end{proposition}

\begin{remark}
Our construction of $\mathcal{L}_p(V)$ required $\f$ to be $\Sigma$-smooth (which is conjecturally always the case). It seems likely that this is a \emph{necessary} condition to carry out the construction of $\mathcal{L}_p(V)$ via overconvergent cohomology. There are two key inputs: the class $\Psi_V$ and the Mellin transform $\mathrm{Mel}_V$. If $\f$ is not $\Sigma$-smooth, then over $V$ we have the Mellin transform but we have little control on the geometry of $\Epar$ at $\f$ and the structure of $\hc(\Y,\DD_\Sigma)_{\m_x}$, so it is hard to construct the class $\Psi_V$. One could instead work over the (unique, smooth) base-change component $V_{\mathrm{bc}} = \Sp(T_{\mathrm{bc}}) \subset V$ through $\f$. Using smoothness, one can exhibit a canonical quotient $M_{V_{\mathrm{bc}}}$ of $\hc(\Y,\DD_\Sigma)\otimes_{\TT_{\Sigma,h}}T$ that is free of rank one over $T_{\mathrm{bc}}$, and construct a `good' class $\Psi_{V_{\mathrm{bc}}}$ in this quotient. However, the Mellin transform does \emph{not} descend to this quotient. Thus $\Sigma$-smoothness appears necessary to simultaneously obtain both $\Psi_V$ and $\mathrm{Mel}_V$.
\end{remark}

\begin{remark}\label{rem:non-base-change}
	More generally, suppose we do not assume that $\f \in S_{\lambda}(U_1(\n))$ is base-change. Suppose $\f$ satisfies Conditions \ref{running assumptions} and is non-critical. By Prop.\ \ref{prop:point} and Thm.\ \ref{thm:dimension 1}, there exists at least one (not necessarily parallel) curve $\Sigma = \mathrm{Sp}(\Lambda) \subset \W_K$ such that $\f$ varies in a family $V = \mathrm{Sp}(T)$ over $\Sigma$. If $\Sigma$ is smooth at $\lambda$, then Cor.\ \ref{prop:free neighbourhood} says $V \to \Sigma$ is \'etale at $x_{\f}$ and $\hc(\Y,\DD_\Sigma)\ssh\otimes_{\TT_{\Sigma,h}} T$ is free of rank one over $\Lambda$. Let $\Psi_V$ be a generator. The methods of this section show $\mathcal{L}_p(V) \defeq \mathrm{Mel}_V(\Psi_V \otimes 1) \in \Dla(\cl_K(p^\infty),T)$ interpolates the $p$-adic $L$-functions of all classical non-critical $y \in V$. If $V$ contains a Zariski-dense set of classical points, then $\Sigma$ is parallel (as it then contains a Zariski-dense set of parallel weights; see also \cite[Thm.\ 1.1]{Serban19}), hence smooth. This construction thus gives a three-variable $p$-adic $L$-function over \emph{any} classical family through $\f$.
	
	 In this generality, classical $y$ are not always Zariski-dense in $V$, but one might expect that $\mathcal{L}_p(V)$ still carries interesting information about the arithmetic of $\f$. For example, if one could prove a functional equation for $\mathcal{L}_p(V)$, then it might be possible use $\mathcal{L}_p(V)$ to prove \cite[Conj.\ 11.2]{BW17}, giving an arithmetic description of the $\mathcal{L}$-invariants attached to $\f$ (similar to \cite{GS93,BDJ17}). 
\end{remark}

\subsection{Critical base-change $p$-adic $L$-functions}\label{sec:critical p-adic l-functions}

 Suppose now $\f$ is critical and $\Sigma$-smooth. Recall $x = x_{\f} \in V$.

\begin{definition}
Let $L_p(\f) \defeq \mathrm{sp}_x(\mathcal{L}_p(V)) \in \Dla(\cl_K(p^\infty),L)$.
\end{definition}

To prove the interpolation property for $L_p(\f)$ from Thm.\ \ref{intro:thm B} of the introduction, we give another description of $L_p(\f)$ using a strategy of Bella\"{i}che. 

\begin{theorem}\label{thm:eigenspace one dim}
	(See \cite[Cor.\ 4.8]{Bel12}). The eigenspace
	$\hc(\Y,\DD_{\lambda}(L))[\f]$ is one-dimensional over $L$, and its image under the specialisation map $\rho_\lambda$ is 0.
\end{theorem}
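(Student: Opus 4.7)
The plan is to adapt the argument of \cite[Cor.\ 4.8]{Bel12} from the Coleman--Mazur setting, using the free-rank-one structure over $T$ provided by Prop.\ \ref{prop:critical free neighbourhood}.

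First, I invoke Prop.\ \ref{prop:critical free neighbourhood}(ii) to shrink $\Sigma$ and $V$ so that $M \defeq \hc(\Y,\DD_\Sigma)\ssh\otimes_{\TT_{\Sigma,h}}T$ is free of rank one over $T$; smoothness of $V$ at $x$ and of $\Sigma$ at $\lambda$ give $T_{\m_x} \cong \Lambda_{\m_\lambda}[X]/(X^e - u)$, with $u$ a uniformiser of $\Lambda_{\m_\lambda}$ and $e \geq 1$ the ramification index of the weight map at $x$. By $\Sigma$-smoothness, $V$ is the unique component of $\Sp(\TT_{\Sigma,h})$ through $x$, so $\hc(\Y,\DD_\Sigma)\ssh_{\m_x} = M_{\m_x}$; fix a generator $\Phi$ of $M_{\m_x}$ over $T_{\m_x}$.

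Next, the short exact sequence $0 \to \Dla_\Sigma \xrightarrow{\cdot u} \Dla_\Sigma \to \Dla_\lambda \to 0$, together with $\h^0_{\mathrm{c}}(\Y,\DD_\Sigma)\ssh = 0$ (Lemma \ref{lem:min degree}), yields, upon passing to slope $\leq h$ parts and localising at $\m_x$, an injection
\[
 M_{\m_x}/u M_{\m_x} \;\cong\; L[X]/(X^e) \;\hookrightarrow\; \hc(\Y,\DD_\lambda(L))\ssh_{\m_x}.
\]
The $\m_x$-torsion of the left-hand side is the one-dimensional line $L\cdot X^{e-1}\Phi$. To conclude that this equals $\hc(\Y,\DD_\lambda(L))[\f]$ (and does not merely contain it), I must rule out additional contributions from the cokernel of this injection, which is a submodule of $\h^2_{\mathrm{c}}(\Y,\DD_\Sigma)\ssh_{\m_x}[u]$. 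The plan here is to prove an analogue of Prop.\ \ref{prop:critical free neighbourhood} in degree two, using the identification in the proof of Prop.\ \ref{prop:appear in H1} with $\h^2_{\mathrm{c}}(\Y,\DD_\Omega)\ssh_{\m_x}[r]$ and Bellaïche's lemma \cite[Lem.\ 4.1]{Bel12}, to show that $\h^2_{\mathrm{c}}(\Y,\DD_\Sigma)\ssh_{\m_x}$ is also free over $T_{\m_x}$, hence $u$-torsion free.

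Finally, for the vanishing under $\rho_\lambda$: by Corollary \ref{cor:non-critical2}, criticality of $\f$ is equivalent to criticality of $f$, and this forces $e \geq 2$, since if $e = 1$ then the injection above would identify $\hc(\Y,\DD_\lambda(L))[\f]$ with a one-dimensional line mapping isomorphically onto $\hc(\Y,\VV_\lambda(L)^*)[\f]$, contradicting criticality. By the multiplicity-one assumption (\ref{eqn:mult one}), the Hecke algebra acts on $\hc(\Y,\VV_\lambda(L)^*)[\f]$ through its residue field $L = T/\m_x$, so $X \in \m_x$ acts as zero on this space; hence $\rho_\lambda(X^{e-1}\Phi) = X^{e-1}\cdot\rho_\lambda(\Phi) = 0$, as $e-1 \geq 1$. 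The hardest step will be the degree-two analogue outlined above, which is required precisely because of the simultaneous presence of the $\f$-eigenpacket in $\hc$ and $\h^2_{\mathrm{c}}$ (Lemma \ref{lem:min degree}) — the central technical complication this paper has been navigating throughout.
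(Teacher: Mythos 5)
Your plan tracks the paper's proof quite closely: the paper likewise uses the free-rank-one structure from Prop.\ \ref{prop:critical free neighbourhood}, the DVR isomorphism $T_{\m_x}\cong\Lambda_{\m_\lambda}[X]/(X^e-u)$, and the resulting identification $(\TT_{\lambda,h})\locx\cong L[X]/(X^e)$, citing \cite[Cor.\ 4.4, Prop.\ 4.6, Thm.\ 4.7]{Bel12} for each step. You are also right to single out the cokernel of the injection $M_\Sigma/uM_\Sigma\hookrightarrow \hc(\Y,\DD_\lambda)\ssh\locx$ (equivalently $\h^2_{\mathrm{c}}(\Y,\DD_\Sigma)\ssh\locx[u]$) as the Bianchi-specific point that deserves care: in Bella\"iche's $\GLt/\Q$ setting there is no cuspidal $\h^2_c$ to worry about, so the paper's statement that ``each step is proved identically'' is glossing over precisely this. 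The paper does not carry out your proposed degree-two analogue of Prop.\ \ref{prop:critical free neighbourhood}; instead it asserts the Hecke-algebra isomorphism $(\TT_{\Sigma,h})\locx\otimes_\Lambda L\cong(\TT_{\lambda,h})\locx$ directly and lets the structure of $M_\lambda\locx$ follow as in \cite[Thm.\ 4.7]{Bel12}. Whether that route implicitly requires the vanishing you flag, or sidesteps it by a dimension count on the Hecke algebra, is not made explicit in the paper; your concern is a reasonable one to raise.

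Your argument for the vanishing of $\rho_\lambda$ on the eigenspace, however, has two concrete problems. First, the appeal to Corollary \ref{cor:non-critical2} is circular: that result lives in \S\ref{factorisation} and its proof uses the factorisation theorem, which in turn relies on the interpolation result Theorem \ref{prop:critical}, which relies on the present theorem. Second, and independently, the implication ``criticality of $\f$ forces $e\geq 2$'' is not justified by the reasoning you give. Non-criticality (Def.\ \ref{def:non-critical}) requires $\rho_\lambda$ to be an isomorphism on generalised eigenspaces \emph{in every degree $i$}. If $e=1$ the degree-one generalised eigenspace is already one-dimensional and $\rho_\lambda$ restricted to it is either zero or an isomorphism; the zero case is perfectly compatible with $\f$ being critical, so you cannot derive a contradiction from $e=1$ alone. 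The paper's phrasing (``$X$ acts nilpotently on the source, hence on the target'') gives the clean argument when $e\geq 2$ ($X^{e-1}\in\m_x$ annihilates the one-dimensional classical eigenspace since $\m_x$ acts there by $0$), and the $e=1$ case must be handled separately — either by observing that the image is $0$ or the whole classical eigenspace, and ruling out the latter using criticality, or by showing $e\geq 2$ is forced in the critical $\Sigma$-smooth situation by some other means. You should not rely on a downstream corollary for this.
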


\begin{proof}
	The proof closely follows the strategy of Bella\"{i}che; each step is proved identically to the referenced result. Firstly, there is an isomorphism
	\[
	(\TT_{\Sigma,h})\locx\otimes_{\Lambda\locl}\Lambda\locl/\m_{\lambda} \cong (\TT_{\lambda,h})\locx,
	\]
	where $\TT_{\lambda,h}$ is the image of $\HH_{\n,p} \otimes L$ in $\mathrm{End}_L(\hc(\Y,\DD_\lambda)\ssh)$ and we again write $\m_x$ for the maximal ideal of this space corresponding to $x$ (see \cite[Cor.\ 4.4]{Bel12}).
	
	After possibly enlarging $L$, there exists a uniformiser $u$ of $\Lambda\locl$ and an isomorphism of $\Lambda\locl$-algebras 
	\[
	\Lambda\locl[X]/(X^e-u)\cong (\TT_{\Sigma,h})\locx
	\]
	that sends $X$ to a uniformiser of $(\TT_{\Sigma,h})\locx$, where $e$ is the ramification index of the weight map $w : \Epar \rightarrow \W_{K,\mathrm{par}}$ at $x$
	(see \cite[Prop.\ 4.6]{Bel12}). 
	
	This is enough to show that the generalised eigenspace $\hc(\Y,\DD_\lambda(L))_{(\f)} = \hc(\Y,\DD_\lambda(L))\locx$ has dimension $e$ over $L$ and is free of rank one over the algebra 
	\begin{equation}\label{eqn:TT isom poly}
		(\TT_{\lambda,h})\locx  \cong L[X]/(X^e).
	\end{equation}
	(See \cite[Thm.\ 4.7]{Bel12}). Under this isomorphism, we see that the eigenspace is exactly the subspace $X^{e-1}\hc(\Y,\DD_\lambda(L))_{(\f)}.$ If $e > 1$, then $X$ acts nilpotently on this eigenspace, hence its image under $\rho_\lambda$, which is thus 0.  If $e = 1$ and $\rho_{\lambda} \neq 0$, then $\rho_\lambda$ is a non-trivial map between $1$-dimensional vector spaces, so must be an isomorphism, contradicting $\f$ being critical; so $\rho_\lambda = 0$.
\end{proof}

\begin{theorem} \label{prop:critical}
	The $p$-adic $L$-function $L_p(\f)$ is $(v_{p}(\alpha_{\pri}))_{\pri|p}$-admissible. For Hecke characters $\varphi$ of $K$ of conductor $\ff|(p^\infty)$ and infinity type $0 \leq (q,r) \leq (k,k),$ we have
	\begin{equation}\label{eq:critical interpolation}
	L_p(\f,\varphi_{p-\mathrm{fin}}) = 0.
	\end{equation}
\end{theorem}
\begin{proof} 
	Firstly, $L_p(\f)$ is admissible by specialising Prop.\ \ref{prop:Lp admissible} at $x$.
	
	By Thm.\ \ref{thm:eigenspace one dim} the eigenspace  $\hc(\Y,\DD_\lambda)[\f]$ is one-dimensional; let $\Psi_{\f}$ be a generator. Now $\mathrm{Mel}(\Psi_{\f})(\varphi_{p-\mathrm{fin}}) = 0$ for all $\varphi$ as in \eqref{eq:critical interpolation}, as evaluation of $\mathrm{Mel}(\Psi_{\f})$ at $\varphi_{p-\mathrm{fin}}$ depends only on $\rho_\lambda(\Psi_{\f})$ \cite[\S7.6]{Wil17}, and $\rho_\lambda(\Psi_{\f}) = 0$. Thus it suffices to prove that $L_p(\f)$ and $\mel(\Psi_{\f})$ are equal up to rescaling. Recall $ \hc(\Y,\DD_\Sigma)\ssh\otimes_{\TT_{\Sigma,h}}T$ is free of rank one over $T$. Let $\Psi_V$ be as in Def.\ \ref{def:phi_V}. Identically\footnote{There is a typo \emph{op.\ cit.}; from the context, $\symb_\Gamma^\pm(\mathbf{D}_{k'})_{(f'_{\beta'})}$ should be $\symb_\Gamma^\pm(\mathbf{D}_{k'})[f'_{\beta'}]$.} to \cite[Prop.\ 4.14]{Bel12}, at $x$ (up to rescaling) we have $\mathrm{sp}_{x,2}(\Psi_V) = \Psi_{\f}$. Finally specialisation is compatible with Mellin transforms by \eqref{eqn:ev and mellin}, so
	\[
	L_p(\f) = \mathrm{sp}_x(\mel_V(\Psi_V)) = \mel(\mathrm{sp}_{x,2}(\Psi_V)) = \mel(\Psi_{\f}).\qedhere
	\]
\end{proof}

	By construction, $\mathcal{L}_p(V)$ is well-defined only up to the choice of $\Phi_V$ in Def.\ \ref{def:phi_V}, corresponding to changing the $p$-adic periods $\{c_y\}$. Specialising, we see $L_p(\f)$ is only well-defined up to scalar multiple. However, this scalar indeterminacy is expected, arising from scaling the periods of $\f$.
	
	Unlike in the non-critical slope case, the admissibility and interpolation properties are not sufficient to determine $L_p(\f)$ uniquely in $\Dla(\cl_K(p^\infty),L)/L^\times$.  However, up to scaling $L_p(\f)$ is uniquely determined by interpolation over $V$, and does not depend further on the method of construction:

\begin{proposition}\label{prop:well-defined}
 Suppose $L_p(\f) \neq 0$. Let $\mathcal{L}_p' : V \times \mathscr{X}(\cl_K(p^\infty)) \to L$ be analytic and $(v_{p}(\alpha_{\pri}))_{\pri|p}$-admissible satisfying the interpolation property of Thm.\ A, with possibly different constants $c_y'$. Then $\mathcal{L}_p'(x) = c\cdot L_p(\f)$ for some $c \in L$.
\end{proposition}
\begin{proof}
Let $L_p'(\f) \defeq \mathcal{L}_p'(x).$ If $L_p'(\f) = 0$ we take $c = 0$; so assume $L_p'(\f) \neq 0$. Let
\[
 C(y,\phi) \defeq \frac{\mathcal{L}_p'(V)(y,\phi)}{\mathcal{L}_p(V)(y,\phi)} \in \mathrm{Frac}\bigg(\roi\big(V\times\mathscr{X}(\cl_K(p^\infty)\big)\bigg).
\]
To see this is well-defined, note there exists a Zariski-dense set $V_{\mathrm{ss}} \subset V$ of classical small slope points $y$ in $V$ of weight $(k,k)$, where $k > 2$. For any finite order Hecke character $\varphi$ of conductor $p^r > 1$, the quantity $L(\f_y,\varphi,k+1)$ converges absolutely to a non-zero number; it follows that $L_p(\f_y,(\varphi|\cdot|^{k})_{p-\mathrm{fin}}) \neq 0$, since the $p$-adic $L$-function does not have an exceptional zero there. As every connected component of $\roi(\mathscr{X}(\cl_K(p^\infty)))$ contains a character of the form $(\varphi|\cdot|^{k})_{p-\mathrm{fin}}$, it follows that $L_p(\f_y)$ is not a zero-divisor in $\roi(\mathscr{X}(\cl_K(p^\infty)))$ (as on every such component the only zero-divisor is 0). Now let $D \in \Dla(\cl_K(p^\infty),T)$ such that $D\mathcal{L}_p(V)  = 0$. At any $y \in V_{\mathrm{ss}}$, we have $\mathrm{sp}_y(D)\mathrm{sp}_y(\mathcal{L}_p(V)) = 0$, and as $\mathrm{sp}_y(\mathcal{L}_p(V)) = c_y L_p(\f_y)$ is not a zero-divisor we see $\mathrm{sp}_y(D) = 0$. As $D$ vanishes at a Zariski-dense set of points, we have $D = 0$, so $\mathcal{L}_p(V)$ is not a zero-divisor and $C(y,\phi)$ is well-defined.

At each classical $y \in V_{\mathrm{ss}}$, the $p$-adic $L$-function $L_p(\f_y)$ is uniquely determined up to scalar multiple by its interpolation and admissibility properties, as in \cite[Thm.\ 7.4]{Wil17}. Thus $\mathcal{L}_p(y)$ and $\mathcal{L}_p'(y)$ are scalar multiples, so $C(y,-) \in L$. As such points are Zariski-dense, for \emph{all} $z \in V$ we see $C(z,-)$ is constant, that is, $C \in \mathrm{Frac}(\roi(V))$. Since (by assumption) neither $L_p(\f)$ nor $L_p'(\f)$ is zero, $C$ does not have a zero or pole at $x$. Hence we may shrink $V$ further so that $C$ has no zeros or poles, that is, $C \in \roi(V)^\times$. Specialising to $x$, we see that $L_p(\f)$ and $L_p'(\f)$ differ by scalar multiplication by $c \defeq C(x) \in L^\times$, as required.
\end{proof}

\begin{remark}
	This proof shows there is a subspace $W_{\f} \subset \Dla(\cl_K(p^\infty),L)$, of dimension $\leq 1$, such that for any $\mathcal{L}_p : V \times \mathscr{X}(\cl_K(p^\infty)) \to L$ satisfying the interpolation property of Thm.\ \ref{intro:interpolation}, we have $\mathcal{L}_p(x) \in W_{\f}$. Further, up to shrinking $V$ any two such functions $\mathcal{L}_p$, $\mathcal{L}_p'$ with $\mathcal{L}_p(x), \mathcal{L}_p'(x) \neq 0$ differ by  $\roi(V)^\times$, as claimed in the introduction (also cf.\ Prop.\ \ref{prop:factor family}).  We can also treat twisted $p$-adic $L$-functions $\mathcal{L}_p^\phi$ by using twisted Mellin transforms $\mathrm{Mel}^\phi$ (Rem.\ \ref{rem:twisted}).
\end{remark}

%
%
\section{Factorisation of base-change $p$-adic $L$-functions}\label{factorisation}
Let $f$ be a classical eigenform with base-change $\f$ to $K$, and let $\chi_{K/\Q}$ denote the quadratic Hecke character attached to $K/\Q$. Artin formalism says that for any rational Hecke character $\varphi$, we have
\[
\Lambda(\f,\varphi\circ N_{K/\Q}) = \Lambda(f,\varphi)\Lambda(f,\varphi\chi_{K/\Q}).
\]
We now prove an analogue of this for $p$-adic $L$-functions (see Thm.\ \ref{thm:factorisation}).

\subsection{$p$-adic $L$-functions attached to classical eigenforms}
Let $f \in S_{k+2}(\Gamma_1(N))$ be a decent eigenform satisfying Conditions \ref{running assumptions Q}, and let $\Lambda(f,\varphi)$ be its $L$-function, normalised to include the Euler factors at infinity. Here $\varphi$ ranges over Hecke characters of $\Q$. Denote the eigenvalue of $f$ at $p$ by $\alpha_p(f)$ and the periods of $f$ by $\Omega_{f}^\pm \in \C^\times$, which are well-defined up to $\overline{\Q}^\times$. Let $h \defeq v_p(\alpha_p(f))$. For any Dirichlet character $\chi$ of conductor $M$, let $\tau(\chi) \defeq \sum_{a \newmod{M}}\chi(a)e^{2 \pi i a/M}$ be its Gauss sum. Let $\eta = \eta_p\eta^p$ be a Dirichlet character, where $\eta_p$ has conductor $p^t$ and $\eta^p$ has conductor $C$ prime to $p$. The following is due to many people.

\begin{theorem}\label{thm:Q}
There exists a locally analytic distribution $L_p^\eta(f)$ on $\Zp^\times$ such that, for any Hecke character $\varphi = \chi|\cdot|^j$, where $\chi$ is finite order of conductor $p^n > p^t$ and $0 \leq j \leq k$, we have
\[L_p^\eta(f,\varphi_{p-\mathrm{fin}}) = \left\{\begin{array}{cl}\frac{(Cp^n)^{j+1}}{\tau((\chi\eta)^{-1})\Omega_{f}^\pm\alpha_p(f)^n}\Lambda(f,\varphi\eta) &: f \text{ is non-critical},\\
0 &: f\text{ is critical}.
\end{array}\right.\]
 The sign of $\Omega_f^\pm$ is given by $\chi\eta(-1)(-1)^j = \pm 1$. The distribution is admissible of order $h$, and if $h < k+1$, it is uniquely determined by this interpolation property.
\end{theorem}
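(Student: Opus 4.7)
The plan is to mirror the Bianchi construction of \S\ref{sec:bms} and \S\ref{sec:three variable} in the simpler classical $\GL_2/\Q$ setting, following Stevens for $f$ non-critical and Bella\"{i}che \cite{Bel12} for $f$ critical.

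First, suppose $f$ is non-critical. The classical overconvergent lifting theorem of Stevens provides a unique overconvergent lift $\Psi_f^\pm \in \hc(Y_1(N),\DD_k(L))^\pm[f]$ of the $\pm$-parts $\phi_f^\pm/\Omega_f^\pm$ of the classical modular symbol. I would then define $L_p^\eta(f,*)$ as the $\eta$-twisted Mellin transform of the appropriate combination of $\Psi_f^\pm$, choosing the sign $\pm$ to match the character $\chi\eta(-1)(-1)^j$ dictated by the evaluation. The interpolation identity is then a direct computation parallel to (but simpler than) the proof of Theorem~\ref{thm:Wil17}, and is classical.

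For $f$ critical but decent, I would use the deformation-theoretic machinery of Bella\"{i}che. By \cite[Thm.\ 2.16]{Bel12}, the Coleman--Mazur eigencurve $\CC$ is smooth at $x_f$ with finite weight ramification degree $e$, and after localisation one has the structural isomorphism $(\TT_{\lambda,h})_{\m_{x_f}} \cong L[X]/(X^e)$. Following the strategy of \S\ref{sec:three variable}, I would build a canonical rank-one quotient $M_V$ of the localised overconvergent cohomology over a nice smooth neighbourhood $V = \Sp T \subset \CC$ lying over $\Sigma \subset \W_\Q$, pick a generator $\Psi_V$ of $M_V$, form the Bella\"{i}che class $\Phi_V = \sum_{i=0}^{e-1} X^i\Psi_V \otimes X^{e-1-i}$ and a lift $\widetilde{\Phi}_V$, and set
\[
L_p^\eta(f,*) \defeq \mathrm{sp}_{x_f}\bigl(\mel^\eta_V(\widetilde{\Phi}_V)\bigr),
\]
which the classical analogue of Prop.\ \ref{prop:well-defined} shows is canonical up to $L^\times$. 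Vanishing of $L_p^\eta(f,\varphi_{p-\mathrm{fin}})$ at every such $\varphi$ then follows from the fact that the specialisation map $\rho_k$ annihilates the generalised eigenspace $\hc(Y_1(N),\DD_k(L))_{(f)}$, which is the classical counterpart of Theorem~\ref{thm:eigenspace one dim}, namely \cite[Cor.\ 4.8]{Bel12}.

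Admissibility of order $h$ is inherited from the slope of the underlying overconvergent class. Uniqueness when $h < k+1$ is Vi\v{s}ik's lemma: an $h$-admissible distribution on $\Zp^\times$ is determined by its values on the characters $\chi z^j$ with $\chi$ of finite $p$-power order and $0 \le j \le k$, and when $h < k+1$ there are enough such evaluations in the admissibility window to pin the distribution down. The main subtlety is the critical case, which relies essentially on Bella\"{i}che's decency hypothesis (via vanishing of $\h^1_f(\Q,\ad\rho_f)$) to guarantee the structural isomorphism $(\TT_{\lambda,h})_{\m_{x_f}} \cong L[X]/(X^e)$ underlying both the construction of $\Phi_V$ and the vanishing assertion.
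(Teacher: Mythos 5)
Your proposal is correct and takes essentially the same route as the paper: invoke Pollack--Stevens for the non-critical case and Bella\"{i}che for the critical case, then obtain the $\eta$-twist via a twisted Mellin transform. The paper's proof is far more compressed — it simply cites \cite{PS11}, \cite{Bel12} for the untwisted construction and only spells out the twist $L_p^\eta(f,*) = \sum_{a\in(\Z/C\Z)^\times}\eta(a)\bigl[\Psi_f|\smallmatrd{1}{a}{0}{C}\bigr]\{0-\infty\}|_{\Zp^\times}$ — whereas you unpack the cited machinery (Stevens' lifting theorem, Bella\"{i}che's $\Phi_V$ and eigenspace vanishing, Vi\v{s}ik uniqueness) in detail, but the underlying argument is the same.
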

If $\eta$ is the trivial character, we just write $L_p(f)$ for this distribution.
\begin{proof}
When $\eta$ is trivial, see e.g.\ \cite{PS11}, \cite{Bel12}; in our normalisations, $\Lambda(f,\varphi) = \Lambda(f,\overline{\chi},j+1)$. In general, we can use a slight variation of their methods. In both papers, one constructs an overconvergent modular symbol $\Psi_f$, then sets $L_p(f) \defeq \Psi_f\{0-\infty\}|_{\Zp^\times}$. If $a \in (\Z/C\Z)^\times$, then one defines a distribution $L_p^a(f)$ on $\Zp^\times$ by 
\[
L_p^a(f) \defeq [\Psi_f|\smallmatrd{1}{a}{0}{C}]\{0-\infty\}|_{\Zp^\times},
\]
 then defines $L_p^{\eta^p}(f) = \sum_{a\in(\Z/C\Z)^\times}\eta^p(a)L_p^a(f)$. Finally, we define $L_p^{\eta}(f,\varphi_{p-\mathrm{fin}})$ $\defeq L_p^{\eta^p}(f,\varphi_{p-\mathrm{fin}}\eta_p)$. Proving the interpolation result is then a formal calculation (compare Rem.\ \ref{rem:twisted}), noting that $\chi\eta = (\chi\eta_p)\eta^p$ has conductor $Cp^n$ as $p^n > p^t$.
\end{proof}

 One also has a more involved interpolation formula at $\chi$ of conductor $p^n \leq p^t$.

There are also many constructions of this in families; see for example \cite{Bel12}.
\begin{theorem}\label{thm:classical families}
Let $x_f$ be the point of $\CC$ corresponding to $f$. There exists an affinoid neighbourhood $V_{\Q}$ of $x_f$ and a locally analytic distribution $\mathcal{L}_p^{\eta}(V_{\Q}) \in \Dla(\Zp^\times,\roi(V_{\Q}))$
 such that at any classical point $y \in V_{\Q}$,
\[\mathcal{L}_p^{\eta}(y,\phi) \defeq \mathrm{sp}_{y}(\mathcal{L}_p^{\eta}(V_{\Q}))(\phi) = c_y^\pm L_p^{\eta}(f_y,\phi),\]
where $\phi$ is locally analytic on $\Zp^\times$, $c_y^\pm \in \overline{\Q}_p^\times$ depends only on $y$, and $\eta\phi(-1) = \pm 1$.
\end{theorem}

	\subsection{Statement of $p$-adic Artin formalism}
	\label{sec:base-change evs}
	
	\subsubsection{Indeterminacy of $p$-adic $L$-functions}
	
	There are indeterminacies in all the constructions of $p$-adic $L$-functions discussed so far in this paper, arising from the choices of (complex and $p$-adic) periods. We make this more precise.
	
	For a Bianchi modular form $\f$ of weight $\lambda$, the $p$-adic $L$-function $L_p(\f)$ (where it exists) is the Mellin transform of a class $\Psi_{\f} \in \hc(\Y,\DD_\lambda)$ (see Def.\ \ref{def:non-critical Lp} or Thm.\ \ref{prop:critical}).
	In general, the class $\Psi_{\f}$ is only canonical up to $p$-adic scalar, and rescaling $\Psi_{\f}$ similarly rescales $L_p(\f)$. We see that the subspace $\overline{\Q}_p \cdot L_p(\f) \subset \Dla(\cl_K(p^\infty),\overline{\Q}_p)$ is canonically defined by our construction. Note here that there is only one complex period $\Omega_{\f}$ to choose (as $\GL_2(\C)$ is connected).

	For a classical modular form $f$, as $\GL_2(\R)$ has two connected components, there are \emph{two} choices of complex period $\Omega_f^\pm$. Correspondingly, the overconvergent modular symbol $\Psi_f$ in the proof of Thm.\ \ref{thm:Q} has the form $\Psi_f = \Psi_f^+ + \Psi_f^-$, where $\Psi_f^\pm$ is in the $\pm$-eigenspace for the action of $\smallmatrd{-1}{}{}{1}$ on overconvergent modular symbols. In this case, each $\Psi_f^\pm$ can be scaled independently by $\overline{\Q}_p$.
	
Where $p$-adic $L$-functions exist, we then have $L_p^\eta(f) = L_p^{\eta,+}(f) + L_p^{\eta,-}(f)$, where $L_p^{\eta,\pm}(f)$ is defined analogously using $\Psi_f^\pm$.  These are supported on disjoint $\pm$-halves of $\cA(\Zp^\times,\overline{\Q}_p)$; precisely, if $\phi$ is a character of $\Zp^\times$, then $L_p^{\eta,\pm}(f,\phi) \neq 0$ only if $\eta\phi(-1) = \pm 1$.  The $L_p^{\eta,\pm}(f)$, like the $\Psi_f^\pm$, can be scaled independently by $\overline{\Q}_p$, but as above, we get two canonical subspaces $\overline{\Q}_p \cdot L_p^{\eta,\pm}(f), \subset \Dla(\Zp^\times,\overline{\Q}_p)$.

For $p$-adic Artin formalism, we need to consider twists of $f$ by $\chi_{K/\Q}$. Attached to $f$ and $\chi_{K/\Q}$, we have \emph{four} canonical spaces 
	\[
		\overline{\Q}_p\cdot L_p^{\pm}(f),\   \overline{\Q}_p \cdot L_p^{\chi_{K/\Q},\pm}(f) \ \subset \Dla(\Zp^\times,\overline{\Q}_p),
	\]
	 corresponding to the Mellin transforms of the (two) lines $\overline{\Q}_p\cdot\Psi_f^\pm$ of overconvergent modular symbols. Taking a product reduces this to only \emph{one} canonical space:
	\begin{lemma}\label{lem:product line}
		Independently rescaling $\Psi_f^\pm$ does not change the space
		\[
			W_f^K \defeq \overline{\Q}_p \cdot L_p(f)L_p^{\chi_{F/\Q}}(f) \subset \Dla(\Zp^\times,\overline{\Q}_p).
		\]
	\end{lemma}
	\begin{proof}
		As $K$ is imaginary, we have $\chi_{K/\Q}(-1) = -1$. Write $L_p(f) = L_p^+(f) + L_p^-(f)$ and $L_p^{\chi_{K/\Q}}(f) = L_p^{\chi_{K/\Q},+}(f) + L_p^{\chi_{K/\Q},-}(f)$. As $L_p^+(f)$ and $L_p^{\chi_{K/\Q},+}(f)$ have disjoint support, the $++$ term in the product vanishes (and similarly for the $--$ term), so
		\[
		L_p(f)L_p^{\chi_{K/\Q}}(f) = L_p^+(f)L_p^{\chi_{K/\Q},-}(f) + L_p^-(f)L_p^{\chi_{K/\Q},+}(f).
		\]
		Now, if we rescale $\Psi_f^+$ by $c^+ \in \overline{\Q}_p$ in the construction of $L_p^+(f)$ and $L_p^{\chi_{K/\Q},+}(f)$, then we scale both terms in the right-hand side by $c^+$ and remain in $W_f^K$; and similarly for rescaling $\Psi_f^-$.
	\end{proof}


\subsubsection{Statement of $p$-adic Artin formalism} 
Let $f \in S_{k+2}(\Gamma_1(N))$ be a decent eigenform satisfying Conditions \ref{running assumptions Q}, and let $\f$ be its base-change. Assume $\f$ is $\Sigma$-smooth, and let $L_p(\f) \in \Dla(\cl_K(p^\infty),\overline{\Q}_p)$ be its $p$-adic $L$-function from \S\ref{sec:critical p-adic l-functions}.

\begin{definition}
	We define the \emph{restriction of $L_p(\f)$ to the cyclotomic line}, denoted by $L_p^{\mathrm{cyc}}(\f) \in \Dla(\Zp^\times,\overline{\Q}_p)$, to be the locally analytic distribution on $\Zp^\times$ given by
	\[
	L_p^\cyc(\f,\phi) \defeq L_p(\f,\phi\circ N\FQ),
	\]
	where $\phi$ is any locally analytic function on $\Zp^\times \cong \cl_{\Q}^+(p^\infty)$.
\end{definition}

Recall $W_f^K \subset \Dla(\Zp^\times,\overline{\Q}_p)$ from Lem.\ \ref{lem:product line}, and define
\[
W_{\f}^\cyc = \overline{\Q}_p \cdot L_p^\cyc(\f) \ \subset  \Dla(\Zp^\times, \overline{\Q}_p).
\]

\begin{theorem}\label{thm:factorisation}
Suppose that $L_p^\cyc(\f)$ and $L_p(f)L_p^{\chi_{K/\Q}}(f)$ are both non-zero. Then $W_{\f}^\cyc = W_f^K$ as lines in $\Dla(\Zp^\times,\overline{\Q}_p)$. 
\end{theorem}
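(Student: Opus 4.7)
The plan is to promote the desired one-variable identity to an equality of three-variable $p$-adic $L$-functions over a neighbourhood of $x_f$ in the eigencurve, and then specialise. First, I would fix a neighbourhood $V_{\Q}$ of $x_f$ in $\CC_N$ such that $\mathrm{BC}: V_{\Q}\to V\defeq \mathrm{BC}(V_{\Q})\subset\E_{\bc}$ is a local isomorphism of smooth rigid curves, using smoothness of $\CC_N$ at $x_f$ (\cite{Bel12}) and of $\E_{\bc}$ at $\mathrm{BC}(x_f)$ (Prop.\ \ref{prop:smooth}), together with the proof of Prop.\ \ref{prop:smooth} which shows $\mathrm{BC}'$ is locally an isomorphism. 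Theorem \ref{intro:interpolation} then supplies a three-variable $p$-adic $L$-function $\mathcal{L}_p(V)\in\Dla(\cl_K(p^\infty),\roi(V))$, and Theorem \ref{thm:classical families} supplies $\mathcal{L}_p(V_{\Q})$ and $\mathcal{L}_p^{\chi_{K/\Q}}(V_{\Q})$ in $\Dla(\Zp^\times,\roi(V_{\Q}))$. Identifying $V\cong V_{\Q}$ via $\mathrm{BC}$, I may restrict $\mathcal{L}_p(V)$ to the cyclotomic line to obtain $\mathcal{L}_p^{\cyc}(V)\in\Dla(\Zp^\times,\roi(V_{\Q}))$, and form the product $\mathcal{L}_p(V_{\Q})\mathcal{L}_p^{\chi_{K/\Q}}(V_{\Q})\in\Dla(\Zp^\times,\roi(V_{\Q}))$.

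Next, I would identify the Zariski-dense set $\mathcal{Y}\subset V_{\Q}$ of classical non-critical points $y$ of weight $k_y+2$ whose base-change $\f_y$ is small-slope enough that the cyclotomic restriction of $L_p(\f_y)$ is uniquely determined by its interpolation at finite-order characters on $\Zp^\times$; because the slope is constant along the Coleman family $V_{\Q}$ while classical weights grow, the relevant inequality $\sum_{\pri\mid p}h_{\pri,y}<k_y+1$ holds on a Zariski-dense set (even in the inert and ramified cases, where the base-change slope doubles). For $y\in\mathcal{Y}$, both $L_p^{\cyc}(\f_y)$ and $L_p(f_y)L_p^{\chi_{K/\Q}}(f_y)$ are uniquely pinned down by interpolation, so classical Artin formalism
\[
\Lambda(\f_y,\varphi\circ N_{K/\Q})=\Lambda(f_y,\varphi)\Lambda(f_y,\chi_{K/\Q}\varphi),
\]
combined with a comparison of the elementary factors in Theorems \ref{thm:Wil17} and \ref{thm:Q} (periods, Gauss sums, and Euler factors at $p$), forces
\[
\mathrm{sp}_y'\bigl(\mathcal{L}_p^{\cyc}(V)\bigr)\;=\;\alpha(y)\,\mathrm{sp}_y'\bigl(\mathcal{L}_p(V_{\Q})\mathcal{L}_p^{\chi_{K/\Q}}(V_{\Q})\bigr)
\]
for some scalar $\alpha(y)\in\overline{\Q}_p^\times$.

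To pass from this pointwise identity to a family identity I would mimic the argument of Prop.\ \ref{prop:well-defined}. Form the ratio
\[
C\;\defeq\;\frac{\mathcal{L}_p^{\cyc}(V)}{\mathcal{L}_p(V_{\Q})\,\mathcal{L}_p^{\chi_{K/\Q}}(V_{\Q})}\;\in\;\mathrm{Frac}\bigl(\roi(V_{\Q}\times\mathscr{X}(\Zp^\times))\bigr),
\]
which is well-defined since the denominator is not a zero-divisor, by the non-vanishing of finite-order twisted $L$-values in high weight already exploited in the proof of Prop.\ \ref{prop:well-defined}. The pointwise equality on $\mathcal{Y}$ shows $C(y,\phi)=\alpha(y)$ is independent of $\phi$ on a Zariski-dense set, hence $C\in\mathrm{Frac}(\roi(V_{\Q}))$. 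The hypotheses that $L_p^{\cyc}(\f)$ and $L_p(f)L_p^{\chi_{K/\Q}}(f)$ are both non-zero mean $C$ has neither a zero nor a pole at $x_f$, so after shrinking $V_{\Q}$ we have $C\in\roi(V_{\Q})^\times$. Specialising at $x_f$ yields $L_p^{\cyc}(\f)=C(x_f)\cdot L_p(f)L_p^{\chi_{K/\Q}}(f)$ with $C(x_f)\in\overline{\Q}_p^\times$, i.e.\ $W_{\f}^{\cyc}=W_f$.

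The hard part will be establishing the pointwise equality on $\mathcal{Y}$ in the precise form above: one must check that the Zariski-dense locus where the base-change slope is genuinely non-critical covers every irreducible component of $V_{\Q}$ (delicate in the inert/ramified cases where the slope doubles), and that the archimedean periods, Gauss sums and the exceptional factors $Z_{\pri}$ of Theorem \ref{thm:Wil17} combine to produce a scalar $\alpha(y)\in\overline{\Q}_p^\times$ rather than an obstruction. The fact that we only need equality up to $\overline{\Q}_p^\times$-scaling (since we are comparing lines) considerably softens this bookkeeping — the exact analytic variation of $\alpha(y)$ is irrelevant, only its non-vanishing at each $y\in\mathcal{Y}$.
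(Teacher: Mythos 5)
Your proposal follows essentially the same route as the paper: first establish the factorisation directly at the Zariski-dense set of classical points of weight $\ell > 2(k+1)$ in $V_{\Q}$ (where the base-change slope $2h$ is genuinely below $\ell+1$, so Amice--V\'{e}lu uniqueness applies and the identity follows from classical Artin formalism after matching periods and Gauss sums via Hasse--Davenport, as in \S\ref{sec:very small slope}), then form the ratio $C$ of the two family $p$-adic $L$-functions in $\mathrm{Frac}\bigl(\roi(V_{\Q}\times\mathscr{X}(\Zp^\times))\bigr)$ and show it is $\phi$-independent on a dense set, hence lies in $\mathrm{Frac}(\roi(V_{\Q}))$, hence in $\roi(V_{\Q})^\times$ near $x_f$ by the non-vanishing hypothesis, which is exactly Prop.~\ref{prop:factor family}. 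One small imprecision to flag: the proof of Prop.~\ref{prop:smooth} establishes that $\mathrm{BC}'$ is a local isomorphism only when $f$ is \emph{non-critical}; in the critical case (which is the case of interest here) the paper does not claim $\mathrm{BC}$ is locally an isomorphism and instead simply pulls back along $\mathrm{BC}^*:\roi(V_K)\to\roi(V_{\Q})$, which is all the ratio argument actually needs.
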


 We will show that it is possible to choose generators of $W_f^K$ and $W_{\f}^\cyc$ that have the same growth and interpolation properties. When the slope is $< (k+1)/2$, these properties uniquely determine the line and force the equality $W_{f} = W_{\f}^\cyc$, as we show in \S\ref{sec:very small slope}. Cases of more general slopes arise very naturally in arithmetic, however, and in that case there are \emph{a priori} an infinite number of lines $W \subset \Dla(\Zp^\times,L)$ with generators satisfying the same growth and interpolation properties. In this case, it is far from obvious that $W_f^K = W_{\f}^\cyc$, and our methods -- where we deform these lines in $p$-adic families -- are required.

	We will prove this by fixing our choices of the periods $\Omega_{\f}$ and $\Omega_{f}^\pm$ such that 
	\begin{equation}\label{eq:complex periods}
		\Omega_{\f} = (-1)^k\unitsize\Omega_f^+\Omega_f^-/2\tau(\chi_{K/\Q}),
	\end{equation}
	which pins down choices of generators $L_p^{\cyc}(\f) \in W_{\f}^\cyc$ and $L_p(f)L_p^{\chi_{K/\Q}}(f) \in W_f^K$.  This choice is possible by the parity of $\chi_{K/\Q}$ and classical Artin formalism.  We then prove that for this choice, there is an equality (of distributions on $\Zp^\times$)
	\[
	L_p^\cyc(\f) = L_p(f)L_p^{\chi_{K/\Q}}(f).
	\]

\begin{remark}
 	Evidently it would be desirable to prove a more precise version of this theorem and give an equality of elements within this line for some \emph{canonical} periods. This appears to be extremely subtle; controlling the periods, with no recourse to $p$-adic $L$-functions, is already difficult, and is the subject of \cite{TU18}. 	
\end{remark}

\subsection{Proof of Thm.\ \ref{thm:factorisation} for slope $h < (k+1)/2$}\label{sec:very small slope}
Recall the Hecke eigenvalues of $\f$ can be described simply in terms of the eigenvalues of $f$:
\begin{itemize}\setlength{\itemsep}{1pt}
	\item[(i)] When $p$ splits as $\pri\pribar$ in $K$, we have $\alpha_{\pri}(\f) = \alpha_{\pribar}(\f) = \alpha_p(f).$
	\item[(ii)] When $p$ is inert in $K$, we have $\alpha_{p\roi_K}(\f) = \alpha_p(f)^2$.
	\item[(iii)] when $p$ is ramified as $\pri^2$ in $K$, we have $\alpha_{\pri}(\f) = \alpha_p(f)$.
\end{itemize} 
 We see that $\f$ has small slope if and only if
\[
v_p(\alpha_p(f)) < \left\{\begin{array}{ll}k+1 &: p\text{ split},\\
\frac{k+1}{2} &: p\text{ inert or ramified.}
\end{array}\right.
\]
We see $\alpha_{p\roi_K}(\f) = \alpha_p(f)^2$, where $U_{p\roi_K}\f = \alpha_{p\roi_K}(\f)\f$. Thus if $h = v_p(\alpha_p(f))$, then $L_p^{\cyc}(\f)$ and $L_p(f)L_p^{\chi_{K/\Q}}(f)$ are admissible of order $2h$ \cite[Def.\ 6.1]{PS11}.

Suppose now $h < (k+1)/2$. As $2h < k+1$, both $L_p^{\cyc}(\f)$ and $L_p(f)L_p^{\chi_{K/\Q}}(f)$ are uniquely determined by their values at critical characters (e.g.\ \cite{AV75}). In this case, it thus suffices to prove the interpolation properties agree. By classical Artin formalism, the classical $L$-values at critical characters agree, so it suffices to check that the constants in the interpolation formulae agree. For characters factoring through $N_{K/\Q}$, the interpolating constant of the Bianchi $p$-adic $L$-function can be simplified. Write $d = d'p^t$, where $d'$ is prime to $p$. If $\eta$ is a Hecke character of $K$, let $\tau_K(\eta)$ be the Gauss sum from \cite[Def.\ 2.6]{BW_CJM}. If $\eta$ is finite order and $\mathrm{cond}(\eta) = M\roi_K$ is principal, then this admits an explicit  description as
	\begin{equation}\label{eqn:gauss sum}
		\tau_K(\eta) \defeq \sum_{a  \in (\roi_K/M\roi_K)^\times}\eta(a)e^{2\pi i \mathrm{Tr}_{K/\Q}\left(\frac{a}{M\sqrt{-d}}\right)},
	\end{equation}
identifying $(\roi_K/M\roi_K)^\times$ with the associated quotient of $\widehat{\roi}_K^\times$. 
\begin{remark}
	We remark that this is different from the (somewhat non-standard) Gauss sum $\widetilde{\tau}(\eta)$ from \cite[\S1.2.3]{Wil17} (which we used in Thm.\ \ref{thm:Wil17}). If $\eta$ has finite order, then $\tau_K(\eta) = \widetilde{\tau}(\eta^{-1})$ (via \cite[p.621]{Wil17}). For compatibility with \cite{Wil17}, we have tried to use $\widetilde{\tau}$ as much as possible, but for non-principal conductor it is much more convenient to use the (idelic) formulation of \cite{BW_CJM}.
\end{remark}
\begin{proposition}
Let $\phi = \chi|\cdot|^j$ with $\mathrm{cond}(\chi) = p^n > p^t$ and $0 \leq j \leq k$, and let $\varphi = \phi\circ N\FQ.$ Then
\[
L_p^\cyc(\f,\phi) = L_p(\f,\varphi) = \left[\frac{(d')^{j+1}p^{2n(j+1)}\unitsize}{(-1)^{k}2\alpha_{p\roi_K}(\f)^n\tau_K(\chi^{-1}\circ N_{K/\Q})\Omega_{\f}}\right]\Lambda(\f,\varphi).
\]
\end{proposition}
\begin{proof}
This is an exercise in book-keeping. In this setting the factors $Z_{\pri}$ of Thm.\ \ref{thm:Wil17} are equal to 1; the infinity type is $(j,j)$, simplifying the sign. If $p$ is unramified in $K$ (i.e.\ $d=d'$), then $\mathrm{cond}(\chi \circ N_{K/\Q}) = p^{n}\roi_K$, so the terms $\varphi(x_{\ff})$ and $\varphi_{\ff}(x_{\ff})$ cancel as the conductor is principal. Recall $\tau_K$ is inverse to the Gauss sum $\widetilde{\tau}$ used in \cite{Wil17} on finite order characters, whilst $\widetilde{\tau}(\phi\circ N_{K/\Q})$ and $\widetilde{\tau}(\chi\circ N_{K/\Q})$ differ by $[N_{K/\Q}(\sqrt{-d})\mathrm{Norm}(p^{n}\roi_K)]^j = d^jp^{2nj}$ (see e.g.\ \cite[\S2.6]{Wil17}). Using the standard identity $\tau_K(\chi\circ N_{K/\Q})\tau_K(\chi^{-1}\circ N_{K/\Q}) = N_{K/\Q}(\mathrm{cond}(\chi\circ N_{K/\Q}))$ we move the Gauss sum to the denominator, as in Thm.\ \ref{thm:Q}.

If $p\roi_K = \pri^2$ is ramified (i.e. $t > 0$), then $\ff\defeq \mathrm{cond}(\chi \circ N_{K/\Q}) = \pri^{2n-t}$ need not be principal, so the computation is more involved. Noting $(d')^{j}p^{2nj} = d^jp^{(2n-t)j}$, one can show this by tracking through the definitions of $\varphi(x_{\ff}), \varphi_{\ff}(x_{\ff})$ and $\widetilde{\tau}(\varphi^{-1})$ in \cite{Wil17}. It is more convenient, though, to use the adelic formulation of the interpolation formula from \cite[Thm.\ 12.1]{BW_CJM}. From \cite[Def.\ 2.6]{BW_CJM}, we see $\tau_K(\varphi) = d^j\tau_K(\chi\circ N_{K/Q}) = d^{j}p^{2n-t}/\tau_K(\chi^{-1}\circ N_{K/\Q})$. We conclude as $\pi_{\ff}^{\mathbf{j}}$ in \cite[Thm.\ 12.1]{BW_CJM} is $p^{(2n-t)j}$ here. 
\end{proof}
By \eqref{eq:complex periods} and the identity $\alpha_{p\roi_K}(\f) = \alpha_p(f)^2$, we see that
\begin{equation}\label{eq:factorisation minus gauss}
\frac{(d')^{j+1}p^{2n(j+1)}\unitsize}{(-1)^k 2\alpha_{p\roi_K}(\f)^n\Omega_{\f}}\cdot  \tau(\chi_{K/\Q})^{-1} = \frac{(p^n)^{j+1}}{\alpha_p(f)^n\Omega_f^\pm}\cdot \frac{(d'p^n)^{j+1}}{\alpha_p(f)^n\Omega_f^\mp}.
\end{equation}
When $h < (k+1)/2$, Thm.\ \ref{thm:factorisation} now follows by combining \eqref{eq:factorisation minus gauss} with the identity
\[\tau_K(\chi\circ N_{K/\Q})\tau(\chi_{K/\Q}) = \tau(\chi)\tau(\chi\chi_{K/\Q})\]
of Gauss sums (applied to $\chi^{-1}$, noting $\chi_{K/\Q}^{-1} = \chi_{K/\Q}$). This is a characteristic $0$ version of the classical Hasse--Davenport identity. It can be verified locally by decomposing the Gauss sums into a product of local epsilon factors, as in \cite[Prop.\ 6.14]{Nar04}; the local result is proved, for example, in \cite[\S6]{Mar72}. This completes the proof in the slope $< (k+1)/2$ case.

\subsection{Proof of Thm.\ \ref{thm:factorisation} in general}
Now suppose $f$ has slope $h \geq \frac{k+1}{2}$. Both $L_p(f)L_p^{\chi_{K/\Q}}(f)$ and $L_p^{\cyc}(\f)$ are admissible of order $2h \geq k+1$, so are not determined by interpolation at critical values. To circumvent this, we use the three variable $p$-adic $L$-function through $\f$. 

Let $V_{\Q}$ be a neighbourhood of $x_f$ in $\CC$ lying over $\Sigma \subset \W_{\Q} \cong \W_{K,\mathrm{par}}$. Let $V$ denote the image of $V_{\Q}$ under the $p$-adic base-change map, a neighbourhood of $x_{\f}= \mathrm{BC}(x_f)$ in $\E_{\mathrm{bc}}$. For any classical point $y \in V_{\Q}$, write $f_y$ for the corresponding modular form, and write $\f_{y}$ for its base-change to $K$. 

The slope of a Coleman family is locally constant. We can shrink $V_{\Q}$ and $\Sigma$ so that: (1) along $V$, the slope at $p$ is constant, equal to $v_p(\alpha_{p\roi_K}(\f)) = 2v_p(\alpha_p(f)) = 2h$; and (2) any classical weight $\ell \in \Sigma\backslash\{k\}$ satisfies $\ell +1 > 2(k+1) \geq 2h$. If $y$ is a classical point in $V_{\Q}$ above such a weight $\ell$, then $v_p(\alpha_p(f_y)) = h < \frac{\ell+1}{2}$, so 
\[
L_p^\cyc(\f_y) = L_p(f_y)L_p^{\chi_{K/\Q}}(f_{y})
\]
by \S\ref{sec:very small slope}, where again we normalise the periods appropriately.

As $\f$ is $\Sigma$-smooth, by \S\ref{sec:subsec three variable} we have a three-variable $p$-adic $L$-function $\mathcal{L}_p(V)$ over $V$. We can restrict this to a two-variable function $\mathcal{L}^\cyc_p(V) \in \Dla(\Zp^\times, \cO(V))$.   
Composing with $\mathrm{BC}^* : \roi(V) \rightarrow \roi(V_{\Q})$, we view this in $\Dla(\Zp^\times,\cO(V_{\Q}))$.

\begin{proposition}\label{prop:factor family}
Suppose that $L_p^\cyc(\f)$ and $L_p(f)L_p^{\chi_{K/\Q}}(f)$ are non-zero. After possibly shrinking $V_{\Q}$ and rescaling by $\roi(V_{\Q})^\times$, we have a factorisation
\[
\mathcal{L}_p^\cyc(V) = \mathcal{L}_p(V_{\Q})\mathcal{L}_p^{\chi_{K/\Q}}(V_{\Q}).
\]
In particular, in $\Dla(\Zp^\times,\roi(V_{\Q}))$ we have an equality of $\roi(V_{\Q})$-lines 
\[
	\roi(V_{\Q})\cdot \mathcal{L}_p^\cyc(V) \ = \  \roi(V_{\Q}) \cdot \mathcal{L}_p(V_{\Q}) \mathcal{L}_p^{\chi_{K/\Q}}(V_{\Q}).
\]
\end{proposition}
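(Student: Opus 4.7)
The plan mirrors the argument of Prop.\ \ref{prop:well-defined}, with the Zariski density of classical points at which the very-small-slope factorisation of \S\ref{sec:very small slope} already holds playing the role of the density of non-critical points there. First I would form the ratio
\[
    C \defeq \frac{\mathcal{L}_p^\cyc(V_K)}{\mathcal{L}_p(V_\Q)\,\mathcal{L}_p^{\chi_{K/\Q}}(V_\Q)} \in \mathrm{Frac}\Big(\roi\big(V_\Q \times \mathscr{X}(\cl_{\Q}^+(p^\infty))\big)\Big),
\]
where $\mathcal{L}_p^\cyc(V_K)$ is pulled back along $\mathrm{BC}^* : \roi(V_K) \to \roi(V_\Q)$. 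To check that the denominator is not a zero-divisor, I would adapt the argument of Prop.\ \ref{prop:well-defined}: at a Zariski-dense set of classical points $y \in V_\Q$ above large weight, one has $L(f_y,\chi,j+1)$ and $L(f_y,\chi\chi_{K/\Q},j+1)$ non-zero for appropriate finite-order $\chi$ and $j$, so neither $\mathcal{L}_p(V_\Q)$ nor $\mathcal{L}_p^{\chi_{K/\Q}}(V_\Q)$ is annihilated by a non-zero distribution.

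Next, I would shrink $V_\Q$ so that every classical weight appearing in $\Sigma_\Q$, other than $k$ itself, exceeds $2(k+1)$. At a classical $y \in V_\Q$ lying above such a weight $\ell$, the slope of $\f_y$ is $2h < \ell+1$, so by \S\ref{sec:very small slope} we have $L_p^\cyc(\f_y,\phi) = c_y \cdot L_p(f_y,\phi)\,L_p^{\chi_{K/\Q}}(f_y,\phi)$ for a scalar $c_y \in \overline{\Q}_p^\times$ independent of $\phi$; this scalar encodes the discrepancy between the three implicit systems of $p$-adic periods used in constructing $\mathcal{L}_p(V_K)$, $\mathcal{L}_p(V_\Q)$ and $\mathcal{L}_p^{\chi_{K/\Q}}(V_\Q)$. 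Consequently $C(y,\phi)$ is independent of $\phi$ at every $y$ in this dense set, and thus $C \in \mathrm{Frac}(\roi(V_\Q))$.

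The non-vanishing hypothesis at $x_f$ now implies that both numerator and denominator of $C$ specialise to non-zero distributions there, so $C$ has neither a zero nor a pole at $x_f$. Shrinking $V_\Q$ once more, I may assume $C \in \roi(V_\Q)^\times$, which immediately gives the claimed equality of $\roi(V_\Q)$-lines. Rescaling the chosen $p$-adic periods defining $\mathcal{L}_p^\cyc(V_K)$ by $C^{-1}$ (or, equivalently, rescaling the interpolated complex period $\Omega_{\f_y}$ over $V_\Q$ via \eqref{eq:complex periods}) then yields the on-the-nose factorisation.

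The main obstacle is establishing that $C$ is constant in $\phi$. This requires the Zariski density of classical points of weight $\ell > 2(k+1)$ in $V_\Q$, which follows once $\Sigma_\Q$ is chosen small enough in $\W_\Q$ and we invoke the local geometry of $\CC_N \to \W_\Q$ at $x_f$. Everything else is essentially formal bookkeeping, combining the Zariski-density tactic of Prop.\ \ref{prop:well-defined} with the interpolation formulae of Theorems \ref{intro:interpolation} and \ref{thm:Q} to track the various period normalisations.
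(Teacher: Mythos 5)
Your proposal is correct and follows essentially the same route as the paper: form the ratio $C$ in the fraction field, verify it is well-defined by adapting the non-zero-divisor argument from Prop.\ \ref{prop:well-defined}, use the very-small-slope factorisation of \S\ref{sec:very small slope} at the Zariski-dense set of classical points above large weights to show $C$ is constant in $\phi$, invoke the non-vanishing hypothesis to rule out a zero or pole at $x_f$, shrink $V_\Q$ so that $C \in \roi(V_\Q)^\times$, and absorb $C$ into a renormalisation of periods. The only cosmetic difference is that you spell out the well-definedness step, which the paper simply cites from Prop.\ \ref{prop:well-defined}.
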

In the general case, Thm.\ \ref{thm:factorisation} follows by specialising this identity at $x_f$.

Rescaling by $\roi(V_{\Q})$ corresponds to renormalising the complex and $p$-adic periods. For each classical $y \in V_{\Q}(L)$, we take $\Omega_{\f_y} = (-1)^\ell\unitsize\Omega_{f_y}^+\Omega_{f_y}^-/2\tau(\chi_{K/\Q}) \in \C^\times$. The renormalisation of $p$-adic periods is handled in the proof.

\begin{proof}
After taking the Amice transform, we may consider the functions in question as analytic functions on the two-dimensional rigid space $V_{\Q}\times\mathscr{X}(\Zp^\times)$, where, as in the introduction, we write $\mathscr{X}(\Zp^\times)$ for the rigid character space of $\Zp^\times$. Consider, then, the quotient
\[
 C = C(z,-) \defeq \frac{\mathcal{L}_p^\cyc(V)}{\mathcal{L}_p(V_{\Q})\mathcal{L}_p^{\chi_{K/\Q}}(V_{\Q})} \in \mathrm{Frac}\big(\roi(V_{\Q}\times\mathscr{X}(\Zp^\times))\big).
\]
This is well-defined by a similar argument to that in Prop.\ \ref{prop:well-defined}.
At each classical point $y \neq x_f$ in $V_{\Q}$, we have $C(y,\phi)$ is constant in $\phi$ using the factorisation at very small slope points. As such points are Zariski-dense, we deduce that $C(z,\phi)$ is constant in $\phi$ for any $z \in V_{\Q}$, so $C \in \mathrm{Frac}(\roi(V_{\Q}))$. Since (by assumption) neither $L_p^\cyc(\f)$ nor $L_p(f)L_p^{\chi_{K.\Q}}(f)$ is zero, $C$ does not have a zero or pole at $x_{f}$. Hence we may shrink $V_{\Q}$ further so that $C \in \roi(V_{\Q})^\times$. But this corresponds to renormalising the $p$-adic periods and completes the proof.
\end{proof}

\begin{remark}
By their constructions, $\mathcal{L}_p^\cyc(V)$ and $\mathcal{L}_p(V_{\Q})\mathcal{L}_p^{\chi_{K/\Q}}(V_{\Q})$ are themselves only well-defined up to multiplication by elements of $\roi(V_{\Q})^\times$, so this indeterminacy is expected. The non-vanishing condition is always satisfied if $f$ and $\f$ are non-critical by non-vanishing of a classical critical $L$-value.  As described in the introduction, it is conjectured that $L_p(f)$ and $L_p^{\chi_{K/\Q}}(f)$ are never zero, and similarly we conjecture that $L_p(\f)$ is never zero.
\end{remark}

\begin{remark}
Suppose $p$ is split and $f$ has level $N$ prime to $p$. Let $\alpha_p$ and $\beta_p$ denote the roots of the Hecke polynomial of $f$ at $p$, and assume $\alpha_p \neq \beta_p$. There are two possible $p$-stabilisations $f_\alpha$, $f_\beta$ of $f$ to level $pN$. The base-change $\f$ has \emph{four} possible $p$-stabilisations to level $pN\roi_K$; as in Rem.\ \ref{rem:missing cases}, we can consider $\f_{\alpha\alpha},\f_{\alpha\beta},\f_{\beta\alpha}$ and $\f_{\beta\beta}$. Then $\f_{\alpha\alpha}$ and $\f_{\beta\beta}$ are the base-changes of $f_{\alpha}$ and $f_{\beta}$, but $\f_{\alpha\beta}$ and $\f_{\beta\alpha}$ cannot be base-change themselves, as they have distinct eigenvalues at $\pri$ and $\pribar$. In this case, Loeffler and Zerbes have mentioned to the authors that $L_p^\cyc(\f_{\alpha\beta},\phi)$ can be expressed as a linear combination of the two products $L_p(f_\alpha,\phi)L_p^{\chi\FQ}(f_\beta,\phi)$ and $L_p(f_\beta,\phi)L_p^{\chi\FQ}(f_\alpha,\phi)$.
\end{remark}

\subsection{Non-criticality under base-change}
Let $f \in S_{k+2}(\Gamma_1(N))$ be a decent eigenform satisfying Conditions \ref{running assumptions Q}. If $f$ has non-critical slope then its base-change $\f$ can still have critical slope. However, critical slope forms can still be non-critical, so it is natural to ask: if $f$ is non-critical (resp.\ critical), is $\f$ non-critical (resp.\ critical)? Conjecturally, we can use Thm.\ \ref{thm:factorisation} to answer this positively:

\begin{corollary}\label{cor:non-critical2}
	\begin{itemize}\setlength{\itemsep}{0pt}
 \item[(i)] If $f$ is critical, then either: (i-a) $\f$ is critical; or (i-b) $\f$ is non-critical and $L_p(f)L_p^{\chi_{K/\Q}}(f) = 0$.
 \item[(ii)]  If $f$ is non-critical, then either: (ii-a) $\f$ is non-critical; or
 (ii-b) the weight map $\Epar \to \W_{K,\mathrm{par}}$ is \'etale at $x_{\f}$, and $L_p^{\cyc}(\f) = 0$; or
(ii-c) there is a classical family of non-base-change Bianchi cusp forms through $\f$.
\end{itemize}
\end{corollary}
Conjecturally $p$-adic $L$-functions are non-zero, and we expect eigenvarieties should not be \'etale at critical points, so (i-b) should not happen and (ii-b) should be doubly impossible; and (ii-c) contradicts Conj.\  \ref{conj:cal-maz precise}. Thus conjecturally $f$ is non-critical if and only if $\f$ is non-critical.
\begin{proof}
	Note from \cite[Thm.\ 2]{Bel12} that if $f$ is critical, then $L_p(f)$ vanishes at every critical $\varphi_{p-\mathrm{fin}}$. Similarly, if $\f$ is critical $\Sigma$-smooth, then $L_p(\f)$ vanishes at every critical $\varphi_{p-\mathrm{fin}}$ (Thm.\ \ref{prop:critical}). Thus if we have non-vanishing of a critical \emph{$p$-adic} $L$-value of $f$ (resp.\ $\f$), then $f$ (resp.\ $\f$) is necessarily non-critical.
	
	(i) Suppose (i-a) fails, i.e.\ $f$ is critical but $\f$ is non-critical, so $L_p(\f)$ exists by Thm.\ \ref{thm:Wil17}. There is a Dirichlet character $\varphi$ of conductor $p^n > 1$ such that
	\[ L(\f,\varphi\circ N_{K/\Q}, k+1) = L(f,\varphi,k+1)L(f,\varphi\chi\FQ,k+1) \neq 0.\]
	Indeed, if $k>0$, then for any $\varphi$, the Euler product expressions for $L(f,\varphi,k+1)$ and $L(f,\varphi\chi_{K/\Q},k+1)$ absolutely converge to non-zero complex numbers. If $k=0$, then this is a consequence of the main result of \cite{Roh84}.
	
	Since $\varphi$ has non-trivial $p$-power conductor, the $p$-adic $L$-functions $L_p(f)$, $L_p^{\chi\FQ}(f)$ and $L_p^{\cyc}(\f)$ do not have exceptional zeros at $\phi  = (\varphi|\cdot|^k)_{p-\mathrm{fin}}$; so as $\f$ is non-critical, by \eqref{eq:non-critical interpolation} we have $L_p^\cyc(\f) \neq 0$. If $L_p(f)L_p^{\chi_{K/\Q}}(f) \neq 0$, then by Thm.\ \ref{thm:factorisation}, up to non-zero rescaling we have $L_p^\cyc(\f) = L_p(f)L_p^{\chi_{K/\Q}}(f)$. We then have
	\begin{equation}\label{eq:non-vanishing}
	0\neq L_p(\f, \phi\circ N_{K/\Q}) = L_p(f,\phi)L_p^{\chi\FQ}(f,\phi),
	\end{equation}
	 meaning $f$ is non-critical, a contradiction. So $L_p(f)L_p^{\chi_{K/\Q}}(f) = 0$, and (i-b) holds.
	
	(ii) Suppose $f$ is non-critical. If $\f$ is non-critical, then (ii-a) occurs; and if $\f$ is critical and $\f$ is not $\Sigma$-smooth, then (ii-c) occurs by Props.\ \ref{prop:parallel weight} and \ref{prop:smooth nc}. So suppose $\f$ is critical and $\Sigma$-smooth. By Prop.\ \ref{prop:smooth nc} and $\Sigma$-smoothness, since $f$ is non-critical, $\mathrm{BC} : \CC \to \Epar$ is locally an isomorphism over $\W_{\Q} \cong \W_{K,\mathrm{par}}$ at $x_f$. By non-criticality, the weight map $\CC\to \W_{\Q}$ is \'etale at $x_f$ (e.g. \cite{Bel12}), so we see $\Epar \to \W_{K,\mathrm{par}}$ is \'etale at $x_{\f}$.
	
	Now suppose $L_p^\cyc(\f) \neq 0$. We know $L_p(f)L_p^{\chi_{K/\Q}}(f) \neq 0$ by non-criticality of $f$, so by Thm.\ \ref{thm:factorisation} up to non-zero rescaling we have $L_p^{\cyc}(\f) = L_p(f)L_p^{\chi_{K/\Q}}(f)$. By the same arguments to \eqref{eq:non-vanishing}, one can choose a locally analytic $\phi$ on $\Zp^\times$ such that
	\[
		L_p(\f, \phi\circ N_{K/\Q}) = L_p^{\cyc}(\f,\phi) = L_p(f, \phi)L_p^{\chi_{K/\Q}}(f,\phi) \neq 0,
	\]
	contradicting Thm.\ \ref{prop:critical} as $\f$ is critical. Thus $L_p^{\cyc}(\f) = 0$, and (ii-b) holds.
\end{proof}

\subsection{Restriction to the anticyclotomic line}
The methods of this section apply in another related case, the details of which we leave to the interested reader; we thank Lennart Gehrmann for pointing this out to us. By class field theory $\cl_K(p^\infty) \cong \mathrm{Gal}(K_\infty/K),$ where $K_\infty$ is the maximal abelian extension of $K$ unramified outside $p$. Above, we restricted to the cyclotomic subextension in $K_\infty$; we can also naturally restrict to the \emph{anticyclotomic} subextension $K_\infty^{\mathrm{anti}}/K$. The \emph{anticyclotomic $p$-adic $L$-function of $f$ over $K$} is a distribution $L_p^{\mathrm{anti}}(f)$ on $\mathrm{Gal}(K_\infty^{\mathrm{anti}}/K)$, admissible of order $h$, that satisfies the interpolation property that at a critical anticyclotomic character $\chi$ of $K$, we have
\[
 \big(L_p^{\mathrm{anti}}(f,\chi)\big)^2 = (*) \Lambda(\f,\chi),
\]
for an explicit factor $(*)$. These objects were introduced by Bertolini and Darmon in \cite{BD96} for ordinary elliptic curves, and general constructions now exist (e.g.\ \cite{Kim17}). If $h < \frac{k+1}{2}$, this interpolation property is enough to show that (after normalising the periods) we have 
\begin{equation}\label{anti-fact}
L_p^{\mathrm{anti}}(f)^2 = L_p^{\mathrm{anti}}(\f),
\end{equation}
where $L_p^{\mathrm{anti}}(\f)$ is the restriction of $L_p(\f)$ to the anticyclotomic line (see e.g.\ \cite{Geh17} for this result in the ordinary case). Suppose there exists such a two-variable function $\mathcal{L}_p^{\mathrm{anti}}(V_{\Q})$, over a neighbourhood $V_{\Q}$ in $\CC$, interpolating the anticyclotomic $p$-adic $L$-functions at classical weights. Then the methods of this section show that, under an analogous non-vanishing condition, and up to multiplication by an element of $\roi(V_{\Q})^\times$, we have an equality of two-variable distributions
\begin{equation}\label{eq:anticyc factorisation}
 \mathcal{L}_p^{\mathrm{anti}}(V_{\Q})^2 = \mathcal{L}_p^{\mathrm{anti}}(V).
\end{equation}
If $h \geq \frac{k+1}{2}$, we can obtain the identity (\ref{anti-fact}) for $\f$ by specialising \eqref{eq:anticyc factorisation} at $\f$.




\lhead{\emph{Appendix: A base-change deformation functor}}
\rhead{\emph{Wang-Erickson}}

\appendix
\addcontentsline{toc}{section}{Appendix: A base change deformation functor}
\stepcounter{section}
  \vspace*{10pt}
   \begin{center}
      \large\textbf{Appendix: A base-change deformation functor}\\[8pt]
      \large{by Carl Wang-Erickson\footnote{C.W.E.\ was supported by EPSRC grant EP/L025485/1. 
}}
   \end{center}
   \vspace*{10pt}

The point of this appendix is to supply the proof of Prop.\ \ref{prop:appendix}, regarding deformations of Galois representations. The main idea we will apply here applies under the following running assumptions: 
\begin{enumerate}[label=(\Alph*)]\setlength{\itemsep}{1pt}
\item there is an index 2 subgroup $H \subset G$ and a chosen element $c \in G \smallsetminus H$ of order 2. Equivalently, $G$ is expressed as a semi-direct product $H \rtimes \langle c\rangle$;
\item $\mathrm{char}(L) \neq 2$, for $L$ the base coefficient field of the deformed representation. 
\end{enumerate}

In the first section we set up the theory of the base change deformation functor. In the second section, we verify that this theory is compatible with arithmetic conditions imposed when $G$ is a Galois group over $\Q$.

\subsection{The base change deformation functor} 
\label{subsec: BC def}
We work under assumptions (A)-(B) above. Let $\rho : G \to \GL_d(L)$ be a representation that is absolutely irreducible after restriction to $H$. Let $\cA_L$ be the category of Artinian local $L$-algebras $(A, \m_A)$ with residue field $L$. We denote by $X$ the deformation functor for $\rho\vert_H$. This is the functor from $\cA_L$ to the category of sets given by
\begin{equation}
\label{eq: X functor}
A \mapsto \{\tilde \rho_A : H \to \GL_d(A) \mid (\tilde \rho_A \mod{\m_A}) = \rho\vert_H\}/\sim,
\end{equation}
where $\sim$ is the equivalence relation of ``strict equivalence,'' that is, conjugation by $1 + M_d(\m_A) \subset \GL_d(A)$. We will let $\rho_A \in X(A)$ denote a \emph{deformation} of $\rho\vert_H$ with coefficients in $A$. This is in contrast to the notation $\tilde\rho_A$, which we reserve for a \emph{lift} of $\rho\vert_H$ to $A$, i.e.\ a homomorphism $\tilde\rho_A \in \rho_A$ as in \eqref{eq: X functor}. 

Let $X^\mathrm{bc}$ denote the subfunctor of $X$ cut out by the condition that some (equivalently, all) $\tilde\rho_A \in \rho_A$ admits an extension to a homomorphism $\tilde\rho^G_A : G \to \GL_d(A)$ such that $\tilde\rho^G_A\vert_H = \tilde\rho_A$. In this case, we say that $\rho_A$ admits an extension to an $A$-valued deformation $\rho^G_A$ of $\rho$.

For $h \in H$, we write $h^c := chc \in H$ for twisting by $c$. Likewise, for a group homomorphism $\eta$ with domain $H$, let $\eta^c(h) := \eta(h^c)$. 

\begin{lemma}
\label{lem: twist extension}
Let $A \in \cA_L$ and $\rho_A \in X(L)$. Then $\rho_A$ admits an extension to $G$ deforming $\rho$ if and only if there exists $\tilde\rho_A \in \rho_A$ such that
\begin{equation}
\label{eq: fixed point}
\ad\rho(c) \cdot \tilde \rho_A^c =  \tilde\rho_A.
\end{equation}
\end{lemma}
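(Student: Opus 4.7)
The plan is to establish both implications directly. The ``if'' direction is a straightforward construction; the content lies in the ``only if'' direction, which reduces to a conjugacy statement for order-$2$ elements of $\GL_d(A)$ that uses assumption (B) crucially.

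For the ``if'' direction, given $\tilde\rho_A \in \rho_A$ satisfying \eqref{eq: fixed point}, I would define $\tilde\rho^G_A : G \to \GL_d(A)$ by $\tilde\rho^G_A(h) := \tilde\rho_A(h)$ for $h \in H$ and $\tilde\rho^G_A(hc) := \tilde\rho_A(h)\rho(c)$. Since $G = H \rtimes \langle c \rangle$ with $c^2 = 1$, checking that this is a homomorphism lifting $\rho$ reduces to the identity $\tilde\rho^G_A(hc \cdot h'c) = \tilde\rho^G_A(h(h')^c)$, which unfolds to $\rho(c)\tilde\rho_A(h')\rho(c)^{-1} = \tilde\rho_A((h')^c)$ (using $\rho(c)^2 = 1$, a consequence of $c^2 = 1$); this is precisely \eqref{eq: fixed point} after substituting $h \mapsto (h')^c$. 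Reduction modulo $\m_A$ recovers $\rho$.

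For the ``only if'' direction, suppose $\tilde\rho^G_A$ extends $\tilde\rho_A$, and set $C := \tilde\rho^G_A(c) \in \GL_d(A)$. Then $C \equiv \rho(c) \pmod{\m_A}$, $C^2 = \tilde\rho^G_A(c^2) = 1$, and the relation $\tilde\rho^G_A(chc) = C\tilde\rho_A(h)C^{-1}$ rearranges to $\ad C \cdot \tilde\rho_A^c = \tilde\rho_A$. It therefore suffices to produce $U \in 1 + M_d(\m_A)$ with $U\rho(c)U^{-1} = C$: then the strictly equivalent lift $U^{-1}\tilde\rho_A U \in \rho_A$ satisfies \eqref{eq: fixed point} verbatim, as a direct computation using $\tilde\rho_A^c = C^{-1}\tilde\rho_A C$ confirms.

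The existence of such a $U$ is the main technical step and is where assumption (B) enters. Since $2 \in A^\times$, the elements $e := (1 + \rho(c))/2$ and $e_C := (1+C)/2$ are idempotents in $M_d(A)$ reducing to the same idempotent of $M_d(L)$. The finitely generated projective $A$-modules $eA^d$, $(1-e)A^d$, $e_C A^d$, $(1-e_C)A^d$ are free over the local Artinian ring $A$, with ranks determined by their reductions; in particular $\mathrm{rank}\,eA^d = \mathrm{rank}\,e_CA^d$ and similarly for complements. Lifting a basis of $L^d$ adapted to the residual decomposition separately to bases adapted to $eA^d \oplus (1-e)A^d$ and to $e_C A^d \oplus (1-e_C)A^d$, and defining $U$ to carry the first lifted basis to the second, yields $U \in 1 + M_d(\m_A)$ satisfying $UeU^{-1} = e_C$, hence $U\rho(c)U^{-1} = 2e_C-1 = C$, completing the proof.
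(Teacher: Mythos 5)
Your proof is correct and follows essentially the same route as the paper: the ``if'' direction is the same explicit construction of the extension, and the ``only if'' direction reduces to conjugating $\tilde\rho_A$ by an element of $1+M_d(\m_A)$ that carries $C := \tilde\rho^G_A(c)$ to $\rho(c)$ (equivalently, the triviality of the deformation functor of $\rho\vert_{\langle c\rangle}$ when $\mathrm{char}\,L\neq 2$). The only difference is that where the paper cites this last fact as a reference, you spell it out directly via the idempotent decomposition $e = (1+\rho(c))/2$, $e_C=(1+C)/2$ and the freeness of finitely generated projectives over the local Artinian ring $A$, which is a perfectly valid way to supply the cited detail.
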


\begin{proof}
Assume that there exists $\tilde\rho_A \in \rho_A$ and $\tilde\rho^G_A : G \to \GL_d(A)$ such that $\tilde\rho^G_A\vert_H = \tilde\rho_A$. Because the characteristic of $L$ is not 2, the deformation functor for $\rho\vert_{\langle c\rangle}$ is trivial; compare the proof of \cite[Prop.\ 5.3.2]{CWE2018}. Equivalently, there exists some $x \in 1 + M_d(\m_A) \subset \GL_d(A)$ such that $\ad x \cdot \tilde\rho^G_A(c) = \rho(c)$. Then one readily observes that $\ad x \cdot \tilde\rho_A$ is a solution to \eqref{eq: fixed point}. 

Next we prove the converse. Assume that we have $\tilde \rho_A$ solving \eqref{eq: fixed point}. Then we define $\tilde\rho^G_A : G \to \GL_d(A)$ by
\[
\tilde\rho^G_A(g) := \left\{
\begin{array}{ll}
\tilde\rho_A(g) & \text{for } g \in H, \\
\rho(c)\tilde\rho_A(h) & \text{for } g = ch,  h \in H.
\end{array}\right.
\]
It is then straightforward to calculate that $\tilde\rho^G_A$ is a group homomorphism such that $\tilde\rho^G_A\vert_H = \tilde\rho_A$.
\end{proof}

Notice that the map of lifts $\tilde\rho_A$ of $\rho\vert_H$ to $A$ sending
\[
\tilde \iota: \tilde\rho_A \mapsto \ad\rho(c) \cdot \tilde\rho_A^c
\]
is an involution on lifts of $\rho\vert_H$. Its fixed points are exactly those lifts satisfying \eqref{eq: fixed point}. This involution descends to an functorial involution of deformations
\[
\iota : X(A) \to X(A).
\]
To justify this claim, we calculate that for any $x \in \GL_d(A)$, 
\[
\tilde\iota(\ad x \cdot \tilde\rho_A) = \ad\rho(c) \cdot \ad x \cdot \tilde\rho_A^c = \ad y \cdot (\tilde \iota (\tilde\rho_A)),
\]
where $y = \ad\rho(c) \cdot x$. 

Let $X^\iota$ denote the $\iota$-fixed subfunctor of $X$, and let $\mathfrak{t}$ (resp.\ $\mathfrak{t}^\mathrm{bc}$) denote the tangent space $X(L[\varepsilon]/(\varepsilon^2))$ (resp.\ $X^\mathrm{bc}(L[\varepsilon]/(\varepsilon^2))$.

 \begin{proposition}
 \label{prop: bc main}
 \begin{itemize}
 \item[(i)] There is a canonical isomorphism $X^\iota \cong X^\mathrm{bc}$.
 \item[(ii)] The deformation problems $X^\mathrm{bc}, X$ on $\mathcal{A}_L$ are pro-represented by pro-objects $R^\mathrm{bc}, R \in \mathcal{\hat A}_L$. The involution $\iota$ induces an automorphism $\iota^* : R \to R$, and there is a natural surjection
 \[
  R \twoheadrightarrow  R^\mathrm{bc} := \textstyle\frac{R}{((1 - \iota^*)(R))}.
 \]
 \item[(iii)] There is a canonical injection $\mathfrak{t}^\mathrm{bc} \hookrightarrow \mathfrak{t}$ of tangent spaces. The image of this injection is the subspace $\mathfrak{t}^\iota \subset \mathfrak{t}$ fixed by the involution $\iota_* : \mathfrak{t} \to \mathfrak{t}$ induced by $\iota$.
 \end{itemize}
 \end{proposition}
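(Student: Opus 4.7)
The plan for part (i) is as follows. One inclusion is immediate from Lemma \ref{lem: twist extension}: if $\rho_A \in X^{\mathrm{bc}}(A)$, then some lift $\tilde\rho_A$ is $\tilde\iota$-fixed, so a fortiori $\iota(\rho_A) = \rho_A$, giving $X^{\mathrm{bc}} \subseteq X^\iota$. For the reverse inclusion, I would argue that every $\iota$-fixed deformation admits a $\tilde\iota$-fixed representative lift. Given any lift $\tilde\rho_A$ of an $\iota$-fixed $\rho_A$, there exists $x \in 1 + M_d(\m_A)$ with $\tilde\iota(\tilde\rho_A) = \ad x \cdot \tilde\rho_A$. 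Using $\tilde\iota^2 = \mathrm{id}$ on lifts (immediate from $c^2 = 1$) together with the absolute irreducibility of $\rho\vert_H$---which forces the $A$-centralizer of $\tilde\rho_A$ to consist of scalars, by the usual Schur-type argument in deformation theory---one obtains $\sigma(x)\, x \in A^\times \cdot I$, where $\sigma(y) := \ad\rho(c)(y)$. That is, the class of $x$ in $(1+M_d(\m_A))/Z$ is a $1$-cocycle for the order-$2$ group $\langle c\rangle$ acting by $\sigma$. Replacing $\tilde\rho_A$ by $\ad y \cdot \tilde\rho_A$ alters $x$ via $x \mapsto \sigma(y)\, x\, y^{-1}$, so producing a $\tilde\iota$-fixed lift amounts to trivializing this cocycle.

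For part (ii), pro-representability of $X$ by some $R \in \mathcal{\hat A}_L$ follows from the Schlessinger--Mazur criteria: absolute irreducibility of $\rho\vert_H$ gives a finite-dimensional tangent space and rules out nontrivial automorphisms, yielding genuine pro-representability. The functorial involution $\iota$ on $X$ corresponds via Yoneda to a ring involution $\iota^* : R \to R$. The fixed subfunctor $X^\iota = X^{\mathrm{bc}}$ is then represented by the largest quotient of $R$ on which $\iota^*$ acts as the identity, namely $R^{\mathrm{bc}} := R/((1-\iota^*)R)$: for $A \in \cA_L$, a map $R \to A$ factors through $R^{\mathrm{bc}}$ if and only if it equalizes $\iota^*$ with $\mathrm{id}_R$, which by Yoneda corresponds to an $\iota$-fixed point of $X(A)$. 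Part (iii) then follows in two compatible ways: the subfunctor inclusion $X^{\mathrm{bc}} \hookrightarrow X$ evaluated at the dual numbers gives the injection $\mathfrak{t}^{\mathrm{bc}} \hookrightarrow \mathfrak{t}$, and the image is identified with $\mathfrak{t}^\iota$ either by applying (i) with $A = L[\varepsilon]/(\varepsilon^2)$, or directly from (ii) by dualizing the cotangent surjection $\m_R/\m_R^2 \twoheadrightarrow \m_{R^{\mathrm{bc}}}/\m_{R^{\mathrm{bc}}}^2$, whose kernel is the image of $(1 - \iota^*)$.

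The hard part will be the cohomological trivialization in part (i). The key input is that $H^1(\langle c\rangle, V) = 0$ for any $L[\langle c\rangle]$-module $V$ when $L$ has characteristic not equal to $2$, proved by the averaging operator $\tfrac{1}{2}(1 + \sigma)$. I would execute the trivialization by devissage along the filtration of $1 + M_d(\m_A)$ by the subgroups $1 + M_d(\m_A^i)$, whose successive quotients, after killing the scalar center, are $L$-vector spaces with a linear $\langle c\rangle$-action; inductively on $i$, vanishing of $H^1$ at each graded piece allows the cocycle $x$ to be trivialized modulo $1 + M_d(\m_A^{i+1})$, and a finite iteration (since $A$ is Artinian) produces the required $y$. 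This is precisely where the running assumption (B) that $\mathrm{char}\, L \neq 2$ is used.
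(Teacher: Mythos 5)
Your proof is correct. For parts (ii) and (iii) you take essentially the same route as the paper: pro-representability from Mazur/Schlessinger, Yoneda identifies $\iota$ with $\iota^*$, the fixed subfunctor is represented by the coinvariant quotient $R/((1-\iota^*)R)$, and (iii) is obtained by dualizing the cotangent surjection (or equivalently by evaluating (i) at the dual numbers).

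For part (i) you actually supply \emph{more} than the paper does. The paper's proof is the single sentence ``Part (i) follows directly from Lemma~\ref{lem: twist extension},'' whereas you correctly observe that the lemma only furnishes the equivalence between membership in $X^{\mathrm{bc}}$ and the existence of a genuinely $\tilde\iota$-fixed \emph{lift}. The inclusion $X^{\mathrm{bc}} \subseteq X^\iota$ is indeed immediate, but the converse---that an $\iota$-fixed deformation \emph{class} admits a $\tilde\iota$-fixed representative lift---is a nonabelian descent statement that the paper leaves implicit. Your argument fills this in cleanly: absolute irreducibility of $\rho\vert_H$ makes the obstruction $\sigma(x)x$ central, so $x$ becomes a $1$-cocycle in $(1+M_d(\m_A))/(\text{center})$, which you then trivialize by devissage along the $\m_A$-adic filtration using the vanishing of $H^1(\langle c\rangle, V)$ for $L$-vector spaces $V$ in characteristic $\neq 2$ (the averaging operator $\tfrac12(1+\sigma)$). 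This makes the role of hypothesis (B) explicit, which the paper's terser proof does not. One small point of care you handle correctly: after modifying by $y$, the cocycle becomes $\sigma(y)xy^{-1}$, and it suffices for this to be a scalar rather than literally $1$, since conjugation by a scalar is trivial; this is exactly why working in the quotient by the center is the right framework.

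Overall: correct, and in part (i) a genuinely more complete argument than the paper's, making explicit a descent step the paper treats as immediate.
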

 
 \begin{proof}
Part (i) follows directly from Lem.\ \ref{lem: twist extension}.
 
For Part (ii), it is well-known that $X$ is pro-representable; see e.g.\ \cite{Maz89}. It is a brief exercise that a homomorphism $R \to A$ kills $(1-\iota^*)(R)$ if and only if the corresponding deformation of $\rho\vert_H$ is $\iota$-fixed. Then the pro-representability of $X^\mathrm{bc}$ by $R^\mathrm{bc}$ follows from (i).
 
Part (iii) follows from Part (ii) and the perfect $L$-linear duality of $\m_R/\m_R^2$ and $X(L[\varepsilon]/(\varepsilon^2))$. 
 \end{proof}

\subsection{Galois-theoretic conditions}

Let $G = G_{\Q,S}$ and $H = G_{K,S_K}$ (see Not.\ \ref{defn:restricted ramification}). We also use the decomposition groups and complex conjugation $c \in G$ given in \eqref{eq: decomp groups}. 
The data $(G,H,c)$ satisfy assumption (A), as $K/\Q$ is imaginary quadratic. 
 
Because the level of the modular form $f$ of Prop.\ \ref{prop:smooth} is supported by $S$, and because $p, \infty \in S$, the representation $\rho_f$ of the absolute Galois group of $\Q$ factors through $G_{\Q,S}$. We let $\rho \defeq \rho_f : G \to \GL_2(L)$, as in Def.\ \ref{def:decent}, with its critical refinement with eigenvalue $\alpha_p$. It is an $L$-linear representation, where $L$ is a $p$-adic field; thus we have satisfied assumption (B).
 
Deformation theory as in \S\ref{subsec: BC def} can be carried out for \emph{continuous} representations of $G$ and $H$, using the $p$-adic topology of $L$, and the arguments therein make good sense in this setting. This is standard; see e.g.\ \cite[\S9]{Kisin2003}. From now on, we impose continuity without further comment. 

Because $G$ and $H$ satisfy the finiteness condition $\Phi_p$ of \cite[\S1.1]{Maz89}, it follows that the deformation rings $R, R^\mathrm{bc}$ of Prop.\ \ref{prop: bc main} representing $X, X^\mathrm{bc}$ are Noetherian and (equivalently) $\mathfrak{t}, \mathfrak{t}^\mathrm{bc}$ have finite $L$-dimension. 

\begin{lemma}
Conditions (i) and (ii) of Def.\ \ref{def:deformation problem proof critical case} determine a subfunctor $X^\mathrm{ref} \subset X$ that is Zariski-closed, hence representable by a quotient ring $R \twoheadrightarrow R^\mathrm{ref}$. 
\end{lemma}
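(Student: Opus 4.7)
The plan is to check that each of the three conditions (i), (ii)(1), (ii)(2) of Definition \ref{def:deformation problem proof critical case} separately cuts out a Zariski-closed subfunctor of $X$; since the intersection of finitely many closed subfunctors is closed, this identifies $X^{\mathrm{ref}}$ as a closed subfunctor of $X$ and, combined with Proposition \ref{prop: bc main}(ii), produces the claimed quotient $R \twoheadrightarrow R^{\mathrm{ref}}$.

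For condition (i), fix a prime $\mathfrak{q}\mid\n$ coprime to $p$. The restriction map $\rho_A \mapsto \rho_A|_{I_\mathfrak{q}}$ defines a morphism of functors from $X$ to the (unrestricted framed) deformation functor of $\rho_x|_{I_\mathfrak{q}}$, and condition (i) cuts out the preimage of the trivial ``constant'' deformation $\rho_x|_{I_\mathfrak{q}} \otimes_L A$. This is the classical fixed-inertial-type condition and is well known to be relatively representable, hence Zariski-closed: after choosing any lift $\tilde\rho_A$, it amounts to requiring the continuous cocycle $I_\mathfrak{q} \to M_2(\m_A)$ measuring the discrepancy between $\tilde\rho_A|_{I_\mathfrak{q}}$ and $\rho_x|_{I_\mathfrak{q}} \otimes_L A$ to be a coboundary, which is visibly a closed condition (see for instance \cite[\S2.3]{Bel12}).

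For the conditions at each $\mathfrak{p}\mid p$, I would invoke $p$-adic Hodge theory. The null-weights condition (ii)(1) is Zariski-closed by Sen theory: the Sen operator of the deformation varies rigid-analytically over $\mathrm{Spec}\, R$, and at each embedding $\tau$ the condition that $0$ be a root of the Sen polynomial is a closed condition on the universal ring. The crystalline periods condition (ii)(2) is the ``weakly refined'' closed condition of Bella\"{i}che--Chenevier, whose closedness is established in \cite[\S2.5]{Bel12}: the existence of an element $\widetilde{\alpha}_\mathfrak{p} \in A$ such that $D_{\mathrm{crys}}(\rho_A|_{G_{K_{\mathfrak{p}}}})^{\varphi^{f_\mathfrak{p}} = \widetilde{\alpha}_\mathfrak{p}}$ contains a free rank-one $K_\mathfrak{p} \otimes_{\Q_p} A$-submodule deforming the chosen $\alpha_\mathfrak{p}$-eigenline can be encoded as a closed condition on $R$, cut out either through Kisin's crystalline deformation rings or through the trianguline $(\varphi,\Gamma)$-module formalism.

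Once each of these three closed conditions is in hand, $X^{\mathrm{ref}}$ is their intersection and is therefore itself Zariski-closed in $X$, hence pro-represented by a quotient $R \twoheadrightarrow R^{\mathrm{ref}}$. The main subtlety is clearly the crystalline periods condition (ii)(2): one must extract $\widetilde{\alpha}_\mathfrak{p}$ and the corresponding crystalline eigenline in a functorial, universally closed manner. For this step I would simply quote the closedness result from \cite[\S2.5]{Bel12} (or equivalently the corresponding statement from Kisin's theory of crystalline deformation rings), so that no new $p$-adic Hodge-theoretic machinery is developed here.
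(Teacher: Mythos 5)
Your proposal is correct and follows the same approach as the paper: decompose into the inertial-type condition, the Sen-weight condition, and the weakly refined (crystalline period) condition, and cite standard relative-representability results for each; the paper just compresses this into a one-line citation of \cite[p.\ 26]{Berg17} and \cite[Prop.\ 8.13]{Kisin2003}. The one point the paper calls out that you leave implicit is worth flagging: the closedness of condition (ii)(2) is \emph{not} automatic but rests on Kisin's hypothesis (8.8.1), which in this setting is satisfied precisely because $f$ is a \emph{critically refined} (in particular regular) $p$-stabilised newform — without some such non-degeneracy hypothesis on the crystalline eigenvalue relative to the Hodge--Tate weights, the ``existence of a free rank-one $\varphi$-eigenspace'' functor need not be Zariski-closed, so this is the hypothesis doing the real work in your step (ii)(2).
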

\begin{proof}
This is standard -- see e.g.\ \cite[p.26]{Berg17} and \cite[Prop.\ 8.13]{Kisin2003}. In particular, the important assumption \cite[(8.8.1)]{Kisin2003} is satisfied because $f$ has been critically refined. 
\end{proof}

\begin{proof}[Proof of Prop.\ \ref{prop:appendix}]
Because both the ``ref'' and ``bc'' conditions have been shown to be Zariski-closed conditions on $X$, their intersection functor $X^{\mathrm{ref}, \mathrm{bc}}$ is representable by a quotient $R^\mathrm{ref} \twoheadrightarrow R^{\mathrm{ref}, \mathrm{bc}}$. Then apply Prop.\ \ref{prop: bc main} and its proof. 
\end{proof}

To make Prop.\ \ref{prop:appendix} useful, we check that the properties of a $G$-deformation $\rho_A^G$ of $\rho_f$ guaranteeing that $\rho_A^G\vert_H$ determines a point of $X^\mathrm{ref}$ (and, consequently, a point of $X^{\mathrm{ref}, \mathrm{bc}}$) are what we would naturally expect them to be. 

\begin{lemma}
\label{lem: Q to K}
Let $\rho_A^G$ be a deformation of $\rho_f : G \to \GL_2(L)$ to $A \in \cA_L$. Then $\rho_A^G\vert_H \in X^\mathrm{ref}(A)$ if and only if $\rho_A^G$ satisfies
\begin{enumerate}[label=(\roman*)]\setlength{\itemsep}{1pt}
\item For primes $q \mid N$ such that $q \neq p$, $\rho_A^G\vert_{I_q} \simeq \rho\vert_{I_q} \otimes_L A$. 
\item The restriction $\rho_A^G\vert_{G_p}$ has 
\begin{enumerate}[label=(\arabic*)] \setlength{\itemsep}{1pt}
\item one Hodge--Sen--Tate weight is constant and equal to 0, and 
\item there exists $\widetilde{\alpha}_p \in A$ such that the $A$-module $D_\mathrm{crys}(\rho_A^G\vert_{G_p})^{\varphi = \widetilde{\alpha}_p}$ is free of rank $1$ and $(\widetilde{\alpha}_p \mod{\m_A}) = \alpha_p$.
\end{enumerate}
\end{enumerate}
\end{lemma}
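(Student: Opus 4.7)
The plan is to carry out a prime-by-prime comparison, separating conditions (i) and (ii), and using standard base-change formulae from $p$-adic Hodge theory together with the involution theory developed in \S\ref{subsec: BC def}.

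For the forward direction ($\rho_A^G$ satisfying (i) and (ii) implies $\rho_A^G|_H \in X^\mathrm{ref}(A)$), I would proceed by pure restriction. For condition (i) of $X^\mathrm{ref}$, I would note that any prime $\mathfrak{q}$ of $K$ dividing $\n$ and coprime to $p$ lies over some $q \mid N$ with $q \neq p$, and $I_\mathfrak{q} \subseteq I_q$, so constancy at $I_q$ descends to constancy at $I_\mathfrak{q}$. For (ii)(1), the $\tau$-Hodge--Sen--Tate weights of $\rho_A^G|_{G_{K_\mathfrak{p}}}$ for each embedding $\tau$ coincide with the Hodge--Sen--Tate weights of $\rho_A^G|_{G_p}$, so the weight condition descends. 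For (ii)(2), I would invoke the standard identification $D_{\mathrm{crys}}(\rho_A^G|_{G_{K_\mathfrak{p}}}) = K_{\mathfrak{p},0} \otimes_{\Q_p} D_{\mathrm{crys}}(\rho_A^G|_{G_p})$ on the maximal unramified sub-extension $K_{\mathfrak{p},0} \subseteq K_\mathfrak{p}$, under which $\varphi_{K_\mathfrak{p}} = \varphi_{\Q_p}^{f_\mathfrak{p}}$; then setting $\widetilde{\alpha}_\mathfrak{p} := \widetilde{\alpha}_p^{f_\mathfrak{p}}$ produces the required rank-one crystalline eigenline after base change, with $\widetilde\alpha_\mathfrak{p} \equiv \alpha_\mathfrak{p}$ modulo $\m_A$ by the formulae relating $\alpha_\mathfrak{p}(\f)$ and $\alpha_p(f)$ preceding the lemma. (In the inert case one must additionally verify that $D_\mathrm{crys}(\rho)^{\varphi^2 = \widetilde\alpha_p^2}$ has the same rank as $D_\mathrm{crys}(\rho)^{\varphi = \widetilde\alpha_p}$, which is ensured by the regularity $\alpha_p \neq -\beta_p$ inherited from regularity of the base-change refinement.)

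For the reverse direction, I would descend conditions (ii)(1) and (ii)(2) via the same base-change identifications read backwards: a rank-one crystalline eigenspace on $\rho_A^G|_{G_{K_\mathfrak{p}}}$ of $\varphi^{f_\mathfrak{p}}$-eigenvalue $\widetilde\alpha_\mathfrak{p}$ gives a rank-one $\varphi$-eigenspace on $\rho_A^G|_{G_p}$ with eigenvalue $\widetilde\alpha_p$, defined as the unique Hensel lift of $\alpha_p$ satisfying $\widetilde\alpha_p^{f_\mathfrak{p}} = \widetilde\alpha_\mathfrak{p}$ (which exists because $\alpha_p \in L^\times$ and $L$ has residue characteristic $p$). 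Condition (i) is more delicate: when $q \mid N$ is unramified in $K$ one has $I_q = I_\mathfrak{q}$ and the statement is tautological, but when $q$ ramifies in $K$, $I_\mathfrak{q} \subset I_q$ has index $2$, and for any $\sigma \in I_q \setminus I_\mathfrak{q}$ I would need to argue that $\rho_A^G(\sigma) = \rho(\sigma) \otimes 1$ after an appropriate change of basis. Since $\rho_A^G(\sigma^2) = \rho(\sigma)^2 \otimes 1$ by hypothesis, $\rho_A^G(\sigma)$ is a square root of $\rho(\sigma)^2 \otimes 1$ that lifts $\rho(\sigma)$; provided the residue characteristic of $A$ is not $2$ (assumption (B) of the appendix) and $\rho(\sigma)$ has distinct nonzero eigenvalues, Hensel's lemma applied to the squaring map on $\GL_2$ forces the desired equality.

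The main obstacle will be the reverse direction of (i) at primes $q \mid N$ ramified in $K$, where one needs regularity of $\rho(\sigma)$ on tame inertia. Fortunately, by Remark~\ref{rem:zd pstab} one may always work under the simplifying hypothesis that $N$ is coprime to the discriminant $d$ of $K$; in that case every prime $q \mid N$ with $q \neq p$ is unramified in $K$ and the problematic case is empty, so the reverse direction of (i) becomes tautological. The remaining facts needed for both directions of (ii) are the faithful flatness of $K_{\mathfrak{p},0}/\Q_p$ and Hensel's lemma, both standard.
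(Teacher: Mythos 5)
The paper's own proof is a one‐liner (``a straightforward exercise''), so your detailed sketch is the only real argument here; the crystalline bookkeeping via $D_{\mathrm{crys},K_\pri} = K_{\pri,0}\otimes_{\Qp}D_{\mathrm{crys},\Qp}$ with $\varphi^{f_\pri}$ acting $K_{\pri,0}$-linearly is correct, and you are right to flag that in the inert case one needs $\alpha_p^2\neq\beta_p^2$ to avoid the $\varphi^2$-eigenspace jumping in rank (this is ensured by the standing regularity of the base-change refinement).

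The issue is the reverse direction of (i) at primes $q\mid N$ that ramify in $K$. Your fallback cites Remark \ref{rem:zd pstab} to assume $N$ coprime to $d$, but that remark does not make this assumption: it assumes a Zariski-density property of regular $p$-stabilised base-changes (for which $N$ coprime to $d$ is merely a \emph{sufficient} condition), and explicitly says the results must be proved without it by working in $\E_{\n'}$. So the ramified case cannot be dodged. Your Hensel/squaring argument for that case is also incomplete: fixing $\rho_A^G(\sigma)$ for a single $\sigma\in I_q\smallsetminus I_{\mathfrak{q}}$ from $\rho_A^G(\sigma)^2$ does not by itself verify that the resulting deformation of $\rho|_{I_q}$ is the constant one (one must also handle the conjugation compatibility $\rho_A^G(\sigma h\sigma^{-1}) = \rho_A^G(\sigma)\rho_A^G(h)\rho_A^G(\sigma)^{-1}$ for $h\in I_{\mathfrak{q}}$), and it imposes a regularity hypothesis on $\rho(\sigma)$ that is neither available nor necessary. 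The clean argument is cohomological and needs no hypothesis on $\rho(\sigma)$: the fibre of the restriction $\mathrm{Def}_{\rho|_{I_q}}\to\mathrm{Def}_{\rho|_{I_{\mathfrak{q}}}}$ over the constant deformation has tangent space $\ker\left(\h^1(I_q,\ad\rho)\to\h^1(I_{\mathfrak{q}},\ad\rho)\right) = \h^1(I_q/I_{\mathfrak{q}},(\ad\rho)^{I_{\mathfrak{q}}})$ by inflation--restriction, and $\h^i(\Z/2\Z,V)=0$ for $i>0$ whenever $2$ is invertible on $V$; applying this at each graded piece of a filtration of $\m_A$ (or equivalently noting both tangent and relative obstruction spaces vanish) shows the fibre is a single point, so $\rho_A^G|_{I_q}$ is forced to be constant. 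This is precisely where assumption (B) of the appendix (char $L\neq 2$) is used, and it runs parallel to the vanishing used in the proof of Lemma \ref{lem: twist extension}.
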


\begin{proof}
It is a straightforward exercise about representations and the corresponding Frobenius isocrystals to verify that the statements of (i)-(ii) of Lem.\ \ref{lem: Q to K} are equivalent to (i)-(ii) of Def.\ \ref{def:deformation problem proof critical case} under both extension and restriction.
\end{proof}

\scriptsize
\renewcommand{\refname}{\normalsize References} 
\bibliography{master_references}{}
\bibliographystyle{alpha}

 \lhead{\emph{Families of Bianchi $p$-adic $L$-functions}}
\rhead{\emph{Barrera Salazar and Williams}}
\Addresses

\end{document}